\documentclass[preprint,10pt,numbers,sort&compress]{elsarticle}

\usepackage{soul}
\usepackage{psfrag}
\usepackage{a4wide}
\usepackage{rotating}
\usepackage{color}
\usepackage{amssymb}
\usepackage{amsmath}
\usepackage{amsthm}
\usepackage{verbatim}
\usepackage{cancel}
\usepackage{nicematrix}
\usepackage{multirow}

\newcommand{\f}[1]{\mathbf{#1}}
\newcommand{\ab}[1]{\boldsymbol{#1}}
\def\bfm#1{\boldsymbol{#1}}
\newcommand{\bb}[1]{\bfm{#1}}

\newcommand{\R}{\mathbb R}

\newcommand{\W}{\mathcal{W}}
\newcommand{\glob}{y}
\newcommand{\loc}{\zeta}
\newcommand{\LL}{i_0}
\newcommand{\RR}{i_1}
\newcommand{\g}{f}
\newcommand{\gC}{f}
\newcommand{\sm}{s}
\newcommand{\sS}{s}
\newcommand{\ot}{\theta}
\newcommand{\Side}{\tau}
\newcommand{\dd}{\partial}

\DeclareMathOperator{\Span}{span}

\newtheorem{prop}{Proposition}

\theoremstyle{definition}
\newtheorem{ex}{Example}

\newproof{pf}{proof}
\bibliographystyle{plain}
\advance\textheight by 0.4cm
\advance\topmargin by -0.2cm

\definecolor{gold}{rgb}{1,0.7,0}
\definecolor{dred}{rgb}{0.92,0,0}
\definecolor{dgreen}{rgb}{0,0.6,0}

\begin{document}

\begin{frontmatter}

\title{Isogeometric collocation with smooth mixed degree splines over planar multi-patch domains}

\cortext[cor]{Corresponding author}

\author[vil]{Mario Kapl}
\ead{m.kapl@fh-kaernten.at}

\author[slo2]{Alja\v z Kosma\v c\corref{cor}}
\ead{aljaz.kosmac@iam.upr.si}

\author[slo1,slo2]{Vito Vitrih}
\ead{vito.vitrih@upr.si}

\address[vil]{Department of Engineering $\&$ IT, Carinthia University of Applied Sciences, Villach, Austria}


\address[slo1]{Faculty of Mathematics, 
Natural Sciences and Information Technologies,\\
University of Primorska, Glagolja\v{s}ka 8, Koper, Slovenia}
\address[slo2]{Andrej Maru\v{s}i\v{c} Institute,
University of Primorska, Muzejski trg 2, Koper, Slovenia}

\begin{abstract}

We present a novel isogeometric collocation method for solving the Poisson's and the biharmonic equation over planar bilinearly parameterized multi-patch geometries. The proposed approach relies on the use of a modified construction of the $C^s$-smooth mixed degree isogeometric spline space~\cite{KaKoVi24b} for $s=2$ and $s=4$ in case of the Poisson's and the biharmonic equation, respectively. The adapted spline space possesses the minimal possible degree~$p=s+1$ everywhere on the multi-patch domain except in a small neighborhood of the inner edges and of the vertices of patch valency greater than one where a degree $p=2s+1$ is required. 
This allows to solve the PDEs with a much lower number of degrees of freedom compared to employing the $C^s$-smooth spline space~\cite{KaVi20b} with the same high degree~$p=2s+1$ everywhere. To perform isogeometric collocation with the smooth mixed degree spline functions, we introduce and study two different sets of collocation points, namely first a generalization of the standard Greville points to the set of mixed degree Greville points and second the so-called mixed degree superconvergent points. 
The collocation method is further extended to the class of bilinear-like $G^s$ multi-patch parameterizations~\cite{KaVi17c}, which enables the modeling of multi-patch domains with curved boundaries, and is finally tested on the basis of several numerical examples. 
\end{abstract}
\begin{keyword}
isogeometric analysis; collocation; 
$C^s$-smoothness; mixed degree spline space; multi-patch domain
\MSC[2010] 65N35 \sep 65D17 \sep 68U07
\end{keyword}

\end{frontmatter}

\section{Introduction}

Isogeometric Analysis~\cite{ANU:9260759, CottrellBook, HuCoBa04} is a numerical approach for solving a partial differential equation (PDE) by performing the numerical simulation with the same B-spline or NURBS functions as used for the CAD model to represent the corresponding computational domain. The CAD model, in particular in case of a complex domain, is often a multi-patch spline geometry~\cite{HoLa93, Fa97}, which is an unstructured mesh consisting of quadrilateral spline patches and can have extraordinary vertices, i.e.~vertices with valencies different from four. To solve a high-order PDE over such a multi-patch spline geometry, a smooth spline space is needed to describe the numerical solution of the PDE.  

When solving a PDE via the Galerkin approach, one replaces the strong form (i.e. the original form) of the PDE by its weak form. This allows the use of spline spaces with lower smoothness compared to the solving of the strong form but the costly matrix assembly as well as possible consistency and robustness issues of the involved numerical integration can be inspiring to use some alternative methods. One alternative is to solve the given strong form of the PDE via the collocation method, which requires no numerical integration and consequently the matrix assembly is considerably faster. However, the collocation method needs spline spaces of higher smoothness (and therefore of higher degrees) compared to the Galerkin method, namely $C^2$-smooth functions for second order PDEs such as the Poisson's equation, see e.g.~\cite{KaVi20, SuperConvergent2015,IsoCollocMethods2010, FahrenderLorenzisGomez2018, GomezLorenzisVariationalCollocation, MonSanTam2017, CostComparison2013}, and $C^4$-smooth functions for fourth order PDEs such as the biharmonic equation and the Kirchhoff plate problem, see e.g. \cite{KaKoVi24, GomezLorenzisVariationalCollocation, Maurin2018, RealiGomez2015, TsplinesIgC2016}.

The study and construction of exactly $C^s$-smooth ($\sm \geq 1$) isogeometric spline spaces over multi-patch domains has been originated for the case $\sm=1$. The existing methods can be classified according to the type of multi-patch spline geometry used, namely $C^1$-smooth multi-patch parameterizations with singularities in the vicincity of the extraordinary vertices, see e.g.~\cite{NgPe16, ToSpHu17, WeLiQiHuZhCa22}, $C^1$-smooth parameterizations with $G^1$-smooth caps in the neighborhood of the extraordinary vertices, see e.g.~\cite{KaPe17, KaPe18, NgKaPe15, WeFaLiWeCa23}, as well as in general just $G^1$-smooth, regular multi-patch parameterizations, see e.g.~\cite{BlMoXu20, ChAnRa18, ChAnRa19, mourrain2015geometrically}, which are often based on the use of particular geometries such as bilinear (e.g.~\cite{BeMa14, KaSaTa21}) or AS-$G^1$ multi-patch parameterizations (e.g.~\cite{CoSaTa16, FaKaKoVi24, FaJuKaTa22, KaSaTa19a}). The extension to higher smoothness $\sm \geq 2$, which is needed for performing isogeometric collocation, has been mainly studied for the third approach and has been firstly considered for the case $\sm=2$ (e.g. \cite{KaVi17a,KaVi17b, KaVi17c, KaVi19a}), which has then been further generalized to the case of an arbitrary smoothness~$\sm \geq 1$ in \cite{KaVi20b}.

The construction of the $C^s$-smooth isogeometric multi-patch spline space~\cite{KaVi20b}
works for planar bilinearly parameterized multi-patch domains as well as for the richer class of bilinear-like $G^{\sS}$ multi-patch parameterizations, cf.~\cite{KaVi17c, KaVi20b}, which enables the use of multi-patch domains with curved boundaries, too. A benefit of the $C^s$-smooth multi-patch spline space~\cite{KaVi20b} are its optimal approximation properties as numerically shown by performing $L^2$ approximation for $1 \leq \sS \leq 4$ in~\cite{KaVi20b}, and by solving the Poisson's and biharmonic equation via isogeometric collocation for $s=2$ and $s=4$ in~\cite{KaVi20} and \cite{KaKoVi24}, respectively. However, a drawback of the $C^{\sS}$-smooth spline space is that a quite high spline degree $p$ is needed, namely $p \geq 2\sS+1$, which hence leads to a high number of degrees of freedom involved. 

In order to overcome the latter issue, a $C^s$-smooth mixed degree and regularity isogeometric spline space over planar bilinear and bilinear-like $G^s$ multi-patch geometries has been developed in \cite{KaKoVi24b} and has been used to solve high order PDEs like the biharmonic and triharmonic equation via the Galerkin approach. The proposed $C^s$-smooth mixed degree and regularity spline space requires the high degree~$p=2s+1$ just in a small neighborhood around the edges and vertices of the multi-patch domain and can have the smallest possible degree~$p=s+1$ in the interior of the single patches, which allows to solve the PDEs with a much lower number of degrees of freedom compared to the spline space~\cite{KaVi20b}. 

In this paper, we aim to  perform also the multi-patch isogeometric collocation with a reduced number of degrees of freedom compared to the spline space~\cite{KaVi20b}, which has been used in \cite{KaVi20} and \cite{KaKoVi24} for solving the Poisson's and biharmonic equation, respectively. For this purpose, we will develop a collocation method employing a $C^s$-smooth mixed degree spline space which will be a modified version of the $C^s$-smooth mixed degree and regularity isogeometric spline space \cite{KaKoVi24b} and which will have the high spline degree $p=2s+1$ just in a small neighborhood around the inner edges and the vertices of patch valency greater than one. Therefore, the adapted $C^s$-smooth mixed degree spline space will even further reduce the places with the required high degree of~$p=2s+1$ compared to the $C^s$-smooth mixed degree and regularity spline space~\cite{KaKoVi24b}, where the high degree is needed in the vicinity of all inner and boundary edges and of all inner and boundary vertices. Thereby, the construction of the proposed mixed degree spline space will be based on the use of an appropriate mixed degree underlying spline space over the unit square $[0,1]^2$ to define the isogeometric functions on the single patches. 

For performing isogeometric collocation with the generated $C^s$-smooth mixed degree isogeometric spline space two different sets of collocation points, namely a generalization of the standard Greville points to the mixed degree Greville points as well as the so-called mixed degree superconvergent points, will be considered and studied. Several numerical examples will demonstrate the potential of the developed collocation method for solving the Poisson's and the biharmonic equation over multi-patch domains with $C^2$ and $C^4$-smooth functions of mostly low degree and hence as desired with a much lower number of degrees of freedom compared to employing the $C^s$-smooth spline space~\cite{KaVi20b} with the same high degree~$p=2s+1$ everywhere.  
In case of multi-patch domains as studied in this work, the use of the mixed degree Greville and mixed degree superconvergent points will lead to a slightly overdetermined linear system which will be solved by means of a least squares approach. Based on a two-patch example, we will propose by choosing a suitable subset of the mixed degree superconvergent points a first technique to impose (as in the one-patch case) a square linear system. 

The remainder of the paper is organized as follows. In Section~\ref{sec:collocation}, we will recall the basic concept of isogeometric collocation for solving the Poisson's and the biharmonic equations by means of $C^2$ and $C^4$-smooth finite discretization spline spaces, respectively. Section~\ref{sec:problem_statement} will present the used $C^s$-smooth, $s=2,4$, isogeometric discretization spline space which will be a $C^s$-smooth spline space of mixed degree possessing the minimal possible degree $p=s+1$ everywhere except in a small neighborhood of the inner edges and of the vertices with a valency greater one where the degree needs to be $p=2s+1$. Two possible choices of collocation points, namely the mixed degree Greville and the mixed degree superconvergent points will be introduced in Section~\ref{subsec:collocationPoints}. Section~\ref{section_Numerical_examples} will then present several numerical examples, which will study the convergence behavior under $h$-refinement with respect to the $L^2$-norm and $H^m$-seminorms for $1 \leq m \leq \sm$, $\sm=2,4$, and which will demonstrate the power of our collocation method to solve the Poisson's and the biharmonic equations. Finally, we will summarize the results and will present some possible further research topics in Section~\ref{sec:Conclusion}. 


\section{The multi-patch isogeometric collocation method} \label{sec:collocation}

In this section, we will present the isogeometric collocation approach, which will be used to solve two particular model problems, namely the Poisson's and the biharmonic equation, over planar multi-patch domains by means of $C^2$ and $C^4$-smooth isogeometric discretization spline spaces. Before, we will describe the configuration of the considered planar multi-patch domain.

\paragraph{The multi-patch configuration} 
We assume that $\Omega \subset \R^2$ is an open planar domain whose closure~$\overline{\Omega}$ can be described as a disjoint union of open quadrilateral patches~$\Omega^{(i)}$, $i \in \mathcal{I}_{\Omega}$, open edges~$\Gamma^{(i)}$, $i \in \mathcal{I}_{\Gamma}$, and vertices~$\bfm{\Xi}^{(i)}$, $i \in \mathcal{I}_{\Xi}$, cf. Fig.~\ref{fig:multipatchCase}.
Moreover, we assume that the closures of any two patches have either an empty intersection, possess exactly one common vertex or share the whole common edge, and that each patch is parameterized by a bilinear, bijective and regular geometry mapping~$\ab{F}^{(i)}$ as $\overline{\Omega^{(i)}} = \ab{F}^{(i)}([0,1]^{2})$, with
\begin{align*}
 \ab{F}^{(i)}: [0,1]^{2}  \rightarrow \R^{2}, \quad 
 \bb{\xi} =(\xi_1,\xi_2) \mapsto
 \ab{F}^{(i)}(\bb{\xi}) = \ab{F}^{(i)}(\xi_1,\xi_2) 
 , \quad i \in \mathcal{I}_{\Omega}.
\end{align*}
Later, it will be important to distinguish between inner and boundary edges and between inner and boundary vertices. Therefore, the index sets $\mathcal{I}_{\Gamma}$ and $\mathcal{I}_{\Xi}$ are also divided into $\mathcal{I}_{\Gamma} = \mathcal{I}_{\Gamma}^I \dot{\cup} \mathcal{I}_{\Gamma}^B$ and $\mathcal{I}_{\Xi} = \mathcal{I}_{\Xi}^I \dot{\cup} \mathcal{I}_{\Xi}^B$, respectively, where 
$I$ denotes the inner and $B$ the boundary case. 

\begin{figure}[bth]
    \centering
    \includegraphics[scale=0.21]{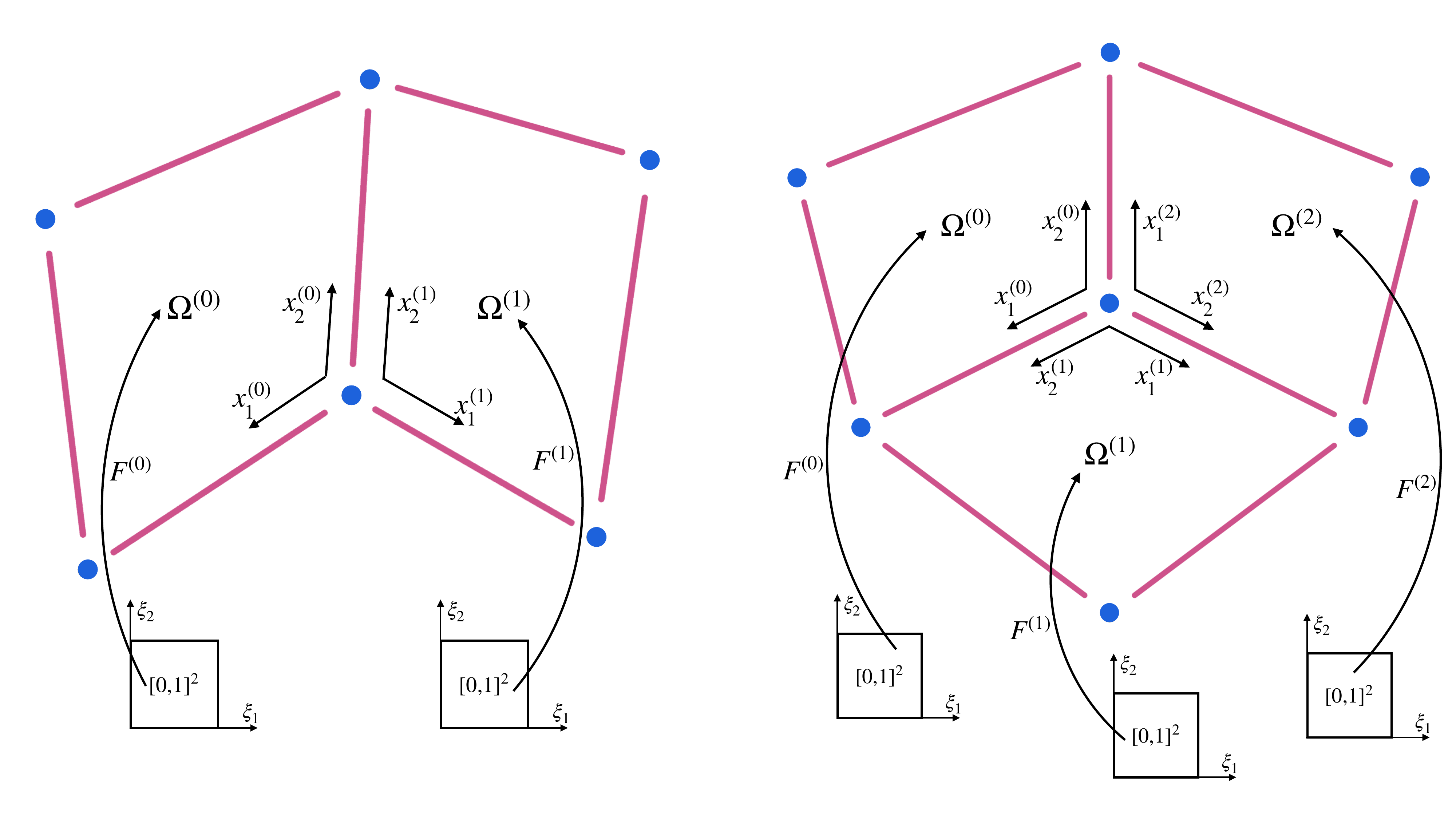}
    \caption{Examples of a two-patch domain $\overline{\Omega}$ (left) and of a three-patch domain $\overline{\Omega}$ (right) with the patches $\Omega^{(i)}$ and their associated geometry mappings $\ab{F}^{(i)}$, with the edges $\Gamma^{(i)}$ (violet) and with the vertices $\bfm{\Xi}^{(i)}$ (blue), where $x_1^{(i)} = \ab{F}^{(i)}(\xi_1,0)$ and $x_2^{(i)} = \ab{F}^{(i)}(0,\xi_2)$.
    }
    \label{fig:multipatchCase}
\end{figure}
\paragraph{Isogeometric collocation for the Poisson's equation} 

Let $g:\Omega \to \R$, $g_1: \partial \Omega \to \R$ be sufficiently smooth functions. We are interested in finding  $u:\overline{\Omega} \to \R$, $u \in C^2(\overline{\Omega})$, which strongly solves the 
Poisson's equation
\begin{align} \label{eq:Poisson}
\triangle u (\bfm{x})  =   g(\bfm{x}), \;\;  \bfm{x} \in \Omega, \quad {\rm and} \quad
u (\bfm{x})   = g_1(\bfm{x}) ,   \;\; \bfm{x} \in \partial \Omega. 
\end{align}
By following the multi-patch isogeometric collocation method~\cite{KaVi20}, we will compute a $C^2$-smooth approximation $u_h \in \mathcal{W}_h^2$ of the solution~$u$, where 
the space $\mathcal{W}_h^2$ denotes the finite dimensional discretization spline space containing $C^2$-smooth functions and $h$ denotes the selected mesh size.
For this we need a set of global collocation points~$\bfm{\glob}_j$, $j \in \mathcal{J}$, which are further separated into inner collocation points $\bfm{\glob}_j^I$, $j \in \mathcal{J}_I$, and boundary collocation points $\bfm{\glob}_j^B$,  $j \in \mathcal{J}_B$.
Inserting these points into \eqref{eq:Poisson}, we obtain 
\begin{align} \label{eq:PoissonPoints}
\triangle u_h (\bfm{\glob}_j^I)  =   g(\bfm{\glob}_j^I), \;\; j \in \mathcal{J}_I, \quad {\rm and} \quad 
u_h (\bfm{\glob}_j^B)  = g_1(\bfm{\glob}_j^B) ,   \;\; j \in \mathcal{J}_B.  
\end{align}
To use the isogeometric approach for solving problem~$\eqref{eq:PoissonPoints}$, we first have to express the global collocation points~$\bfm{\glob}_{j}$, $j \in \mathcal{J}$, with respect to local coordinates as
$$
\bfm{\loc}^{I}_{j} = \left( \bfm{F}^{(\iota_j)}\right)^{-1} \left( \bfm{\glob}_j^I \right) , \;  j \in \mathcal{J}_I, \quad \mbox{ and } \quad
\bfm{\loc}^{B}_{j} = \left( \bfm{F}^{(\iota_j)}\right)^{-1} \left( \bfm{\glob}_j^B \right) , \;  j \in \mathcal{J}_B,
$$
where 
\begin{equation} \label{eq:patch_index_selection}  
\iota_j := \min \{ i \in \mathcal{I}_\Omega, \; \bfm{\glob}_j \in \overline{\Omega^{(i)}} \}.
\end{equation}
Having the local collocation points $\bfm{\loc}^{I}_{j}$ and $\bfm{\loc}^{B}_{j}$, equations
\eqref{eq:PoissonPoints} can be transformed into 
\begin{align} \label{eq:collocationSystemLocalPoisson}
\frac{1}{\left| \det J \ab{F}^{(\iota_j)} \left( \bfm{\loc}^{I}_{j}  \right) \right|}  \left( \nabla \circ \left( N^{(\iota_j)}(\bb{\xi})  
\nabla \left(u_h ( \bfm{F}^{(\iota_j)}(\bb{\xi}))\right) \right)\right){\bigg|}_{\bfm{\xi} =\bfm{\loc}^{I}_{j} } & = 
g\left(\ab{F}^{(\iota_j)}\left(\bfm{\loc}^{I}_{j}\right)\right), \quad j \in \mathcal{J}_{I},
\nonumber \\[-0.25cm]
&  \\[-0.25cm]
u_h \left( \bfm{F}^{(\iota_j)}  \left(\bfm{\loc}^{B}_{j} \right)\right) &  = g_1\left(\ab{F}^{(\iota_j)}\left(\bfm{\loc}^{B}_{j}\right)\right),
 \mbox{ }j \in \mathcal{J}_{B}.\nonumber
\end{align}
This implies a linear system for the unknown coefficients $c_i$ of the approximation $u_h = \sum_{i \in \mathcal{I}} c_i \phi_i \in \mathcal{W}_h^2$, where 
$\mathcal{I} = \{0,1, \ldots, \dim \mathcal{W}_h^2-1 \}$, and $\{\phi_i\}_{i\in \mathcal{I}}$ is the basis of $\mathcal{W}_h^2$. 

\paragraph{Isogeometric collocation for the biharmonic equation} 

Let us recall the generalization of the approach for the Poisson's equation from the previous paragraph to the collocation method for the biharmonic equation, cf.~\cite{KaKoVi24}.
The goal is now to find  $u:\overline{\Omega} \to \R$, $u \in C^4(\overline{\Omega})$, which solves the biharmonic equation
\begin{align} \label{eq:biharmonic}
\triangle^2 u (\bfm{x}) =   g(\bfm{x}), \; \; \bfm{x} \in \overline{\Omega},  
\quad {\rm and} \quad
u (\bfm{x})  = g_1(\bfm{x}) ,   \;\; 
\partial_{\bfm{n}} u (\bfm{x}) = g_2(\bfm{x}) ,   \;\; \bfm{x} \in \partial \Omega, 
\end{align}
in strong form, 
where $g:\overline{\Omega} \to \R$ and $g_1, g_2: \partial \Omega \to \R$ are sufficiently smooth functions. 
More precisely, we want to get the $C^4$-smooth approximation $u_h \in \mathcal{W}_h^4$ of the exact solution~$u$, where $\mathcal{W}_h^4$ denotes now the finite dimensional discretization spline space containing $C^4$-smooth functions. 
Inserting global inner collocation points $\bfm{\glob}_j^I$, $j \in \mathcal{J}_I$, and global boundary collocation points $\bfm{\glob}_j^B$,  $j \in \mathcal{J}_B$, into \eqref{eq:biharmonic}, we obtain 
\begin{align} \label{eq:biharmonicPoints}
\triangle^2 u_h (\bfm{\glob}_j^I)   =   g(\bfm{\glob}_j^I), \; \; j \in \mathcal{J}_I, 
\quad {\rm and} \quad
u_h (\bfm{\glob}_j^B)  = g_1(\bfm{\glob}_j^B) , \;\;  
\partial_{\bfm{n}} u_h (\bfm{\glob}_j^B)   = g_2(\bfm{\glob}_j^B) ,   \;\;  j \in \mathcal{J}_B. 
\end{align}
Then, employing the associated inner and boundary local collocation points $\bfm{\loc}^{I}_{j}$, $j \in \mathcal{J}_{I}$, and $\bfm{\loc}^{B}_{j}$, $j \in \mathcal{J}_{B}$, respectively, equations \eqref{eq:biharmonicPoints} can be transformed (\cite{BaDe15,KaKoVi24}) into
\begin{align}  \label{eq:collocationSystemLocal}
\frac{1}{\left| \det J \ab{F}^{(\iota_j)} \left( \bfm{\loc}^{I}_{j}  \right) \right|}  \nabla \circ  \hspace{-0.05cm} \Bigg( N^{(\iota_j)}(\bb{\xi}) 
\nabla \Bigg(  \frac{1}{\left| \det J \ab{F}^{(\iota_j)}  \left( \bfm{\xi} \right) \right|}  & \nabla \circ \left( N^{(\iota_j)}(\bb{\xi}) 
\nabla \left(u_h ( \bfm{F}^{(\iota_j)}(\bb{\xi}) ) \right) \right) \hspace{-0.13cm} \bigg) \hspace{-0.05cm} \bigg) {\Bigg|}_{\bfm{\xi} =\bfm{\loc}^{I}_{j} } \nonumber\\
&
=
g\left(\ab{F}^{(\iota_j)}\left(\bfm{\loc}^{I}_{j}\right)\right),\;\, \;\; j \in \mathcal{J}_{I}, \nonumber \\
&  \\[-0.4cm]
\normalsize
u_h \left( \bfm{F}^{(\iota_j)}  \left(\bfm{\loc}^{B}_{j} \right)\right)  & = g_1\left(\ab{F}^{(\iota_j)}\left(\bfm{\loc}^{B}_{j}\right)\right),
 \mbox{ }j \in \mathcal{J}_{B}, \nonumber \\
 \left\langle\bfm{n},\left(J\bfm{F}^{(\iota_j)}\left(\bfm{\loc}^{B}_{j}\right)\right)^{-1}\nabla u_h\left( \bfm{F}^{(\iota_j)}  \left(\bfm{\loc}^{B}_{j} \right)\right) \right\rangle  & = g_2\left(\ab{F}^{(\iota_j)}\left(\bfm{\loc}^{B}_{j}\right)\right),
 \mbox{ }j \in \mathcal{J}_{B}. \nonumber
\end{align}
This again leads to a linear system for the unknown coefficients $c_i$ of the approximation $u_h = \sum_{i \in \mathcal{I}} c_i \phi_i \in \mathcal{W}_h^4$, where now $\mathcal{I} = \{0,1, \ldots, \dim \mathcal{W}_h^4-1 \}$, and $\{\phi_i\}_{i\in \mathcal{I}}$ is the basis of $\mathcal{W}_h^4$. 


\paragraph{Separation of collocation points} \label{subsec:eparation}

The straightforward separation of the set of global collocation points~$\bfm{\glob}_j$, $j \in \mathcal{J}$, into inner collocation points $\bfm{\glob}_j^I$, $j \in \mathcal{J}_I$, and boundary collocation points $\bfm{\glob}_j^B$,  $j \in \mathcal{J}_B$, would be to take all global collocation points which lie on the boundary of the domain $\partial \Omega$ as boundary collocation points,
and to take the remaining points as inner collocation points. 
However, this separation works well in the case of Poisson's equation, while for the biharmonic equation this would lead even in the one-patch case to an overdetermined linear system, since we would have too many collocation points in total. Therefore, we will follow in the biharmonic case the separation technique used in~\cite{RealiGomez2015, GomezRealiSangali2014, KaKoVi24}. This will imply a square linear system for the one-patch case, and a slightly overdetermined linear system for the multi-patch case. 

Below, we will present in Section~\ref{sec:problem_statement} the construction of a specific $C^s$-smooth mixed degree isogeometric multi-patch discretization spline space, which will be a variant of the $C^s$-smooth mixed degree and regularity spline space~\cite{KaKoVi24b}, and which will be employed for $s=2$ and $s=4$ to perform isogeometric collocation for solving the Poisson's and the biharmonic equation over several multi-patch domains, cf. Section~\ref{section_Numerical_examples}. For this purpose, we will further describe in Section~\ref{subsec:collocationPoints} two particular sets of collocation points, namely mixed degree Greville and mixed degree superconvergent points.




\section{
The smooth mixed degree isogeometric discretization spline space} \label{sec:problem_statement}
 
We will present a particular $C^s$-smooth mixed degree isogeometric spline space over a bilinearly parameterized planar multi-patch domain, whose construction will be motivated by the design of the $C^s$-smooth mixed degree and regularity spline space~\cite{KaKoVi24b}. These mixed degree spline spaces represent a good alternative to the ``standard" $C^s$-smooth spline space~\cite{KaVi20b}, that requires a high degree of $p=2\sm+1$ on the entire multi-patch domain, by reducing this high degree in most parts of the domain to the minimal possible one of $p=s+1$. In detail, the proposed $C^s$-smooth mixed degree spline space will possess the high spline degree $p=2s+1$ just in a small neighborhood of the inner edges and of the vertices of patch valency greater than one, and will hence reduce further parts of high degree compared to the mixed degree spline space~\cite{KaKoVi24b}, that still requires the high degree of $p=2s+1$ in the vicinity of all inner and boundary edges and of all inner and boundary vertices of the multi-patch domain.       



\subsection{The mixed degree underlying spline space on $[0,1]^2$}  \label{subsec:construction}

The construction of the $C^s$-smooth mixed degree isogeometric spline space will be based on the use of an appropriate mixed degree underlying spline space on $[0,1]^2$ to define the functions on the single patches. Before introducing this mixed degree underlying spline space, we will present some needed notations. Firstly, we will denote by $\mathcal{S}_h^{(p,p),(r,r)}([0,1]^2) =\mathcal{S}_h^{\ab{p},\ab{r}}([0,1]^2)$ the tensor-product spline space $\mathcal{S}_h^{p,r}([0,1]) \otimes \mathcal{S}_h^{p,r}([0,1])$, where $\mathcal{S}_h^{p,r}([0,1])$ is the univariate spline space on the unit interval~$[0,1]$ of degree~$p$, regularity~$r$ and mesh size~$h=\frac{1}{k+1}$ possessing the uniform open knot vector $(t_0^{p,r},\ldots,t_{2p+k(p-r)+1}^{p,r})$ with $k$ inner knots $\frac{i}{k+1}$, $i=1,\ldots,k$, of multiplicity $p-r$. Secondly, the B-spline bases of the spline spaces $\mathcal{S}_h^{p,r}([0,1])$ and  $\mathcal{S}_h^{\ab{p},\ab{r}}([0,1]^2)$ will be denoted by $N_{j}^{p,r}$ and $N_{j_1,j_2}^{\ab{p},\ab{r}}=N_{j_1}^{p,r}N_{j_2}^{p,r}$, respectively, with $j,j_1,j_2=0,1,\ldots,n_p-1$, 
where $n_{p}= \dim \mathcal{S}_h^{p,r}([0,1]) = p+1+k(p-r)$. 

The goal is to use a modified version of the mixed degree underlying 
space $\mathcal{S}_h^{(\ab{p}_1, \ab{p}_2),\ab{\sm}}([0,1]^2)$ from \cite{KaKoVi24b} which will consist on the one hand of spline functions of degree $p_1=\sm+1$ with vanishing derivatives of order~$\ot \leq \sS$ at the part of the boundary of $[0,1]^2$ corresponding to inner edges of the multi-patch domain $\overline{\Omega}$, and on the other hand of spline functions of degree $p_2=2\sm+1$ whose support is contained only in the vicinity of the same part of the boundary of $[0,1]^2$. This is different from~\cite{KaKoVi24b}, where the mixed degree underlying spline space $\mathcal{S}_h^{(\ab{p}_1, \ab{p}_2),\ab{\sm}}([0,1]^2)$ possesses spline functions of degree~$p_2=2s+1$ in the vicinity of all boundaries of $[0,1]^2$ independent if a boundary edge of $[0,1]^2$ corresponds to an inner edge of the multi-patch domain~$\overline{\Omega}$.

Below, we will briefly describe the construction of the modified mixed degree underlying spline space $\mathcal{S}_h^{(\ab{p}_1, \ab{p}_2),\ab{\sm}}([0,1]^2)$, cf.~Fig.~\ref{fig:SpacesMixed}, and will refer to \ref{sec:AppendixSec2} for the detailed construction:
\begin{itemize}
\item
We start with the B-splines $N_{j_1,j_2}^{\ab{p}_1,\ab{\sm}}$ from the spline space $\mathcal{S}_h^{\ab{p}_1,\ab{\sm}}([0,1]^2)$, $p_1=s+1$, that have vanishing derivatives of order $\ot \leq \sm$ at the part of the boundary $\partial([0,1]^2)$ corresponding to the inner edges of the multi-patch domain $\overline{\Omega}$. We denote the subspace spanned by these B-splines by $\mathcal{S}_{1} ([0,1]^2)$. 
\item In the next step, we add the B-splines $N_{j_1,j_2}^{\ab{p}_2,\ab{\sm}}$ from the space $\mathcal{S}_h^{\ab{p}_2,\ab{\sm}}([0,1]^2)$, $p_2=2s+1$, having a non-vanishing derivative of order $\ot \leq \sm$ at the same part of $\partial( [0,1]^2)$. These functions span the subspace denoted by $\mathcal{S}_{2} ([0,1]^2)$. 
\item Finally, to ensure the completeness of the 
underlying spline space $\mathcal{S}_h^{(\ab{p}_1, \ab{p}_2),\ab{\sm}}([0,1]^2)$, we add some of the previously eliminated B-splines from the space $\mathcal{S}_h^{\ab{p}_1,\ab{\sm}}([0,1]^2)$, after truncating them (in one or both directions) with respect to the space $\mathcal{S}_h^{\ab{p}_2,\ab{\sm}}([0,1]^2)$ such that their derivatives of order $\ot \leq \sm$ vanish at the part of $\partial([0,1]^2)$ associated to
the inner edges of the multi-patch domain $\overline{\Omega}$. Thereby,
$\overline{N}_{i}^{\,p_1,\sm}$ denotes the univariate truncation of ${N}_{i}^{p_1,\sm} = \sum_{j=0}^{n_{p_2}-1} \mu_j^{(i)} N_j^{p_2,\sm}$, and is defined as 
$
\overline{N}_{i}^{\,p_1,\sm} = 
\sum_{j=\sm+1}^{n_{p_2}-\sm-2} \mu_j^{(i)} N_j^{p_2,\sm}$, 
$i=0,1,\ldots, n_{p_1}-1, \,  \mu_j^{(i)} \in \R,\, \mu_j^{(i)} \geq 0.
$
These bivariate truncated B-splines 
span the subspace denoted by $\mathcal{\overline{S}}_1 ([0,1]^2)$. 
\end{itemize}
We therefrom get the full mixed degree underlying spline space $\mathcal{S}_h^{(\ab{p}_1, \ab{p}_2),\ab{\sm}}([0,1]^2)$ as the direct sum
\begin{equation}  \label{eq:MixedSpace}
 \mathcal{S}_h^{(\ab{p}_1, \ab{p}_2),\ab{\sm}}([0,1]^2) = \mathcal{S}_{1} ([0,1]^2)  \oplus \mathcal{\overline{S}}_1 ([0,1]^2) \oplus \mathcal{S}_2 ([0,1]^2),
\end{equation}
where the detailed description of the subspaces $\mathcal{S}_{1} ([0,1]^2)$, $\mathcal{\overline{S}}_1 ([0,1]^2)$ and $\mathcal{S}_2 ([0,1]^2)$ is based on the number and position of the edges of $[0,1]^2$ corresponding to the inner edges of $\overline{\Omega}$, which comprises five possible cases, namely four, three, two adjacient, two opposite or just one inner edge, and is explicitly given in \ref{sec:AppendixSec2}. The positions of the extrema of all basis functions of the five possible variants of the space $\mathcal{S}_h^{(\ab{p}_1, \ab{p}_2),\ab{\sm}}([0,1]^2)$  
are presented in Fig.~\ref{fig:SpacesMixed}. Note that the mixed degree underlying space $\mathcal{S}_h^{(\ab{p}_1, \ab{p}_2),\ab{\sm}}([0,1]^2)$ in~\cite{KaKoVi24b} coincides with the first variant, namely for the case of four inner edges, and is always used in~\cite{KaKoVi24b} independent of the number of inner edges. 
\begin{figure}[h!]
        \centering
\begin{tabular}{cc}
 \includegraphics[scale=0.27]{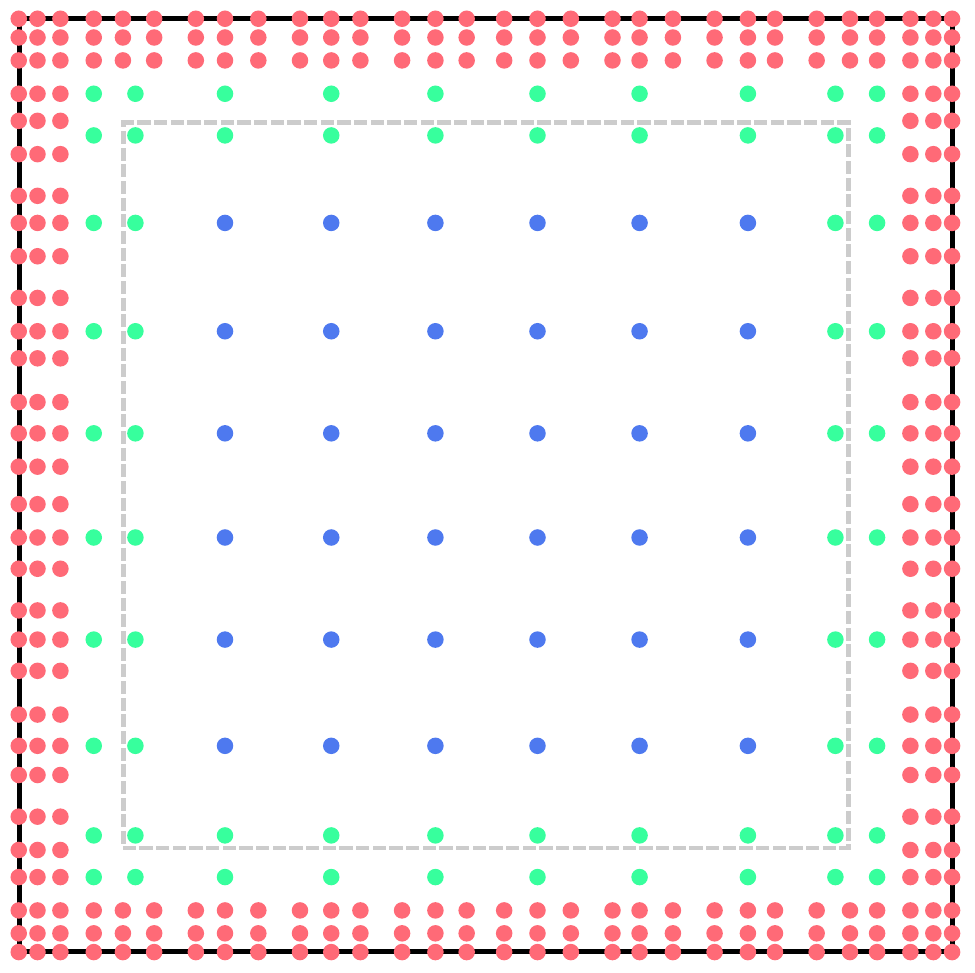} $\quad$ &
\includegraphics[scale=0.35]{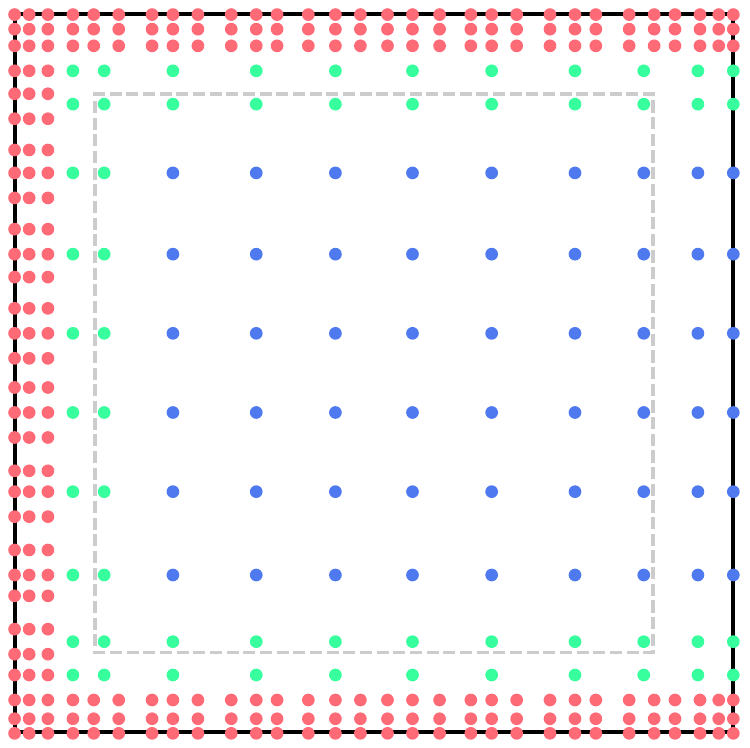}  \\
Four inner edges  & Three inner edges \\[0.4cm]
\end{tabular}
    \begin{tabular}{ccc}
\includegraphics[scale=0.35]{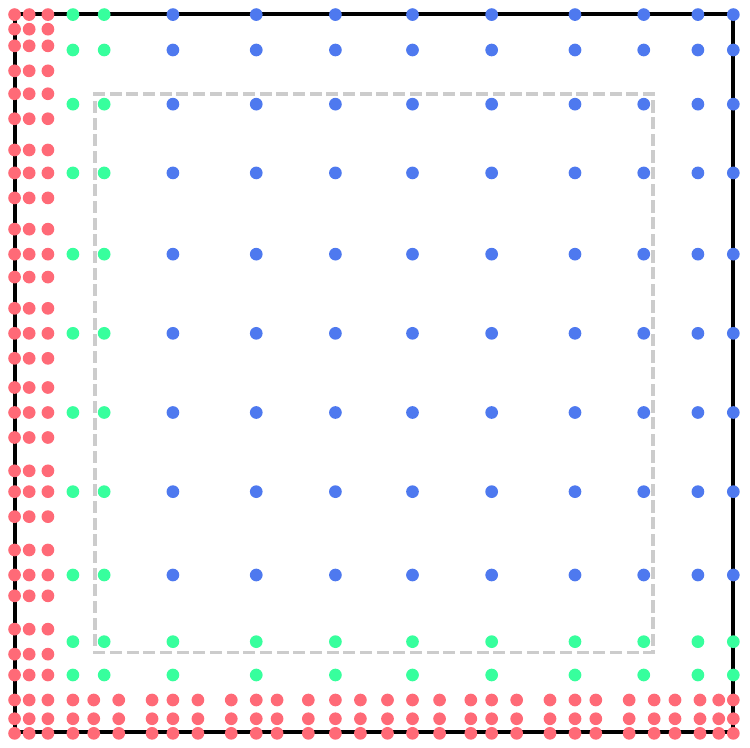} & \includegraphics[scale=0.35]{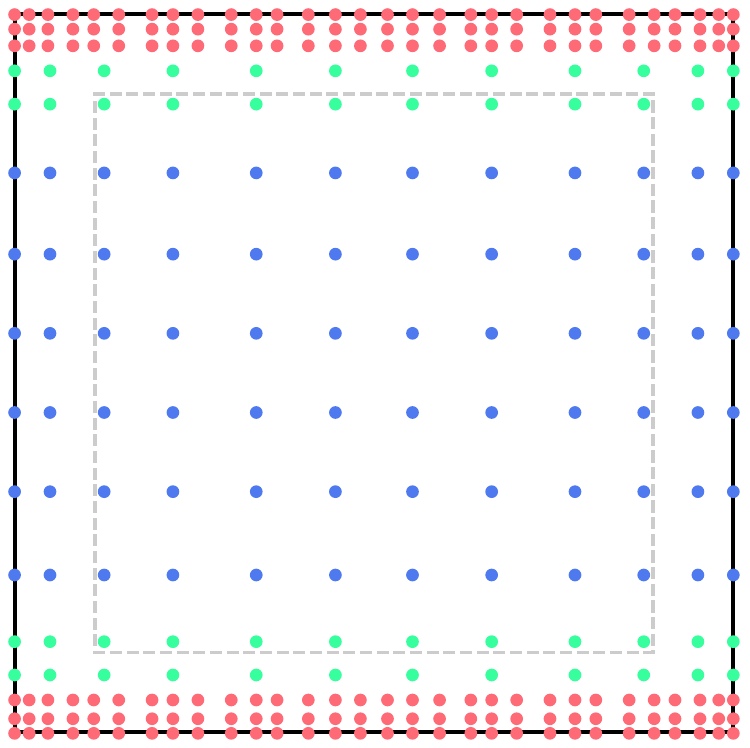}   &
\includegraphics[scale=0.35]{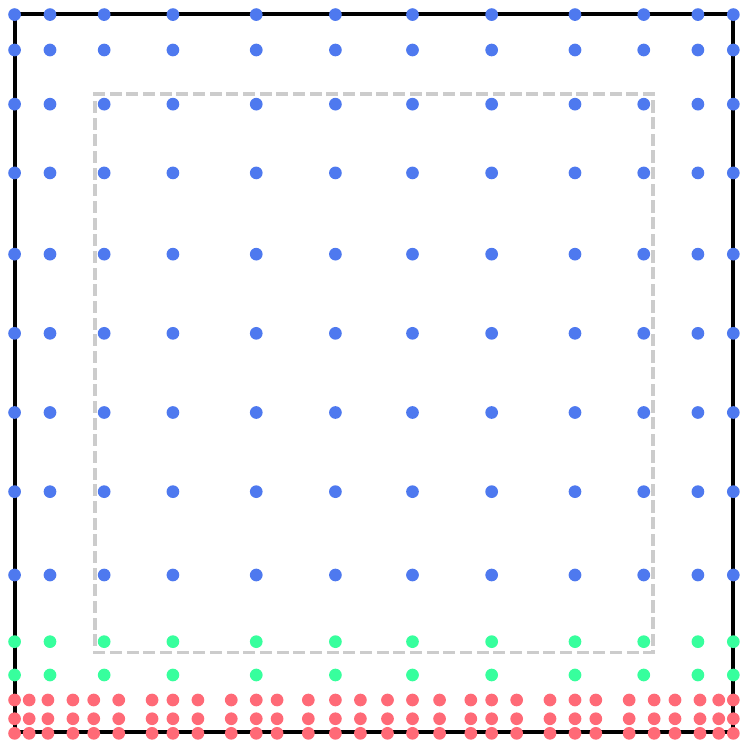} \\
Two adjacent inner edges & Two opposite inner edges &  One inner edge
\end{tabular}
    \caption{
    The positions of extrema of all basis functions 
    of the space $\mathcal{S}_h^{(\ab{\sm}+\ab{1},2\ab{\sm}+\ab{1}),\ab{\sm}}([0,1]^2)$ for $\sm=2$ 
    and $k=8$. The five possible different variants of the space $\mathcal{S}_h^{(\ab{p}_1, \ab{p}_2),\ab{\sm}}([0,1]^2)$ depend on the number and position of the edges of $[0,1]^2$ which correspond to the inner edges of the multi-patch domain $\overline{\Omega}$.
    The blue, green and red dots correspond to the functions belonging to the spaces $\mathcal{S}_1([0,1]^2)$, $\mathcal{\overline{S}}_1([0,1]^2)$ and $\mathcal{S}_2([0,1]^2)$, respectively. The dashed gray lines denote the boundary of the subdomain $[h,1-h]^2$. 
    }
    \label{fig:SpacesMixed}
\end{figure}
One can show (cf.~\cite{KaKoVi24b}) that 
all basis functions of the spaces $\mathcal{S}_1([0,1]^2)$,  $\mathcal{\overline{S}}_1([0,1]^2)$ and $\mathcal{S}_2([0,1]^2)$ are linearly independent and thus form a basis of the full space $\mathcal{S}_h^{(\ab{p}_1, \ab{p}_2),\ab{\sm}}([0,1]^2)$. 
They have a local support, are nonnegative and form a partition of unity. 
Additionally (cf.~\ref{sec:AppendixSec2}), we can derive a general dimension formula of the space $\mathcal{S}_h^{(\ab{p}_1, \ab{p}_2),\ab{\sm}}([0,1]^2)$:  
Let $E$, $1\leq E \leq 4$, and $V$, $0\leq V \leq 2$, be the number of edges and vertices of $[0,1]^2$ which correspond to the inner edges and boundary vertices of patch valency one of the multi-patch domain $\overline{\Omega}$, respectively. Then
\begin{equation*}  \label{eq:dimGeneral}  
\dim \mathcal{S}_h^{(\ab{p}_1, \ab{p}_2),\ab{\sm}}([0,1]^2) =  
(k+2)^2 + \big((2+2E)k+(12-E-2V)\big)s + \big((E \,k+(5-V)\big)s^2.
\end{equation*}

\subsection{The $C^{\sS}$-smooth mixed degree discretization spline space} \label{subsec:Cs_space}

We will describe the construction of the $C^s$-smooth mixed degree isogeometric spline space that will be used as discretization space for the isogeometric collocation method in Section~\ref{sec:collocation}. The generated spline space will be an adapted version of the spline space \cite{KaKoVi24b} with the aim to further reduce the places with functions of the high degree $p_2=2s+1$. Instead of having functions of the high degree~$p_2=2s+1$ in the vicinity of all edges and vertices as in~\cite{KaKoVi24b}, the adapted spline space will have functions of the high degree $p_2=2s+1$ just in the neighborhood of all inner edges and of all vertices of patch valency greater than one. The construction of the spline space will be based on the application of the mixed degree underlying spline space introduced in the previous subsection, and will allow to further reduce the required number of degrees of freedom for performing isogeometric collocation compared to the use of~\cite{KaKoVi24b} and especially to the use of~\cite{KaVi20b}, which consists of only functions of the high degree~$p_2=2s+1$, cf.~\cite{KaVi20, KaKoVi24}.  

For each inner edge $\Gamma^{(i)}$, $i \in \mathcal{I}_{\Gamma}^I$, with $\overline{\Gamma^{(i)}}=\overline{\Omega^{(i_0)}} \cap \overline{\Omega^{(i_1)}}$, $i_0,i_1 \in \mathcal{I}_{\Omega}$, assuming that $\ab{F}^{(i_0)}(0,\xi)=\ab{F}^{(i_1)}(0,\xi)$, $\xi \in [0,1]$, as shown in Fig.~\ref{fig:multipatchCase} (left), we define the spline functions $\g_\ell^{(i,\Side)}:[0,1]\to \R$, $\Side\in \{\LL,\RR\}$, as 
\begin{equation*}   \label{eq:gC2}
 \g_\ell^{(i,\Side)}(\xi) = \left(\alpha^{(i,\Side)}(\xi)\right)^{-\ell} \hspace{-0.1cm} \partial_1^\ell 
 \left(\phi \circ \ab{F}^{(\tau)}\right)(0,\xi) - \sum_{j=0}^{\ell-1} {\ell \choose j} 
 \left(\frac{\beta^{(i,\Side)}(\xi)}{\alpha^{(i,\Side)}(\xi)}\right)^{\ell-j}  \hspace{-0.3cm} \dd^{\ell-j} \gC_j^{(i,\tau)}(\xi),\; \,\ell=0,\ldots, \sS,
 \end{equation*}
where $\alpha^{(i,\Side)}$ and $\beta^{(i,\Side)}$
are linear polynomial functions (called gluing functions), 
determined as
\begin{equation*}  \label{eq:alphaLRbar}
 \alpha^{(i,\Side)}(\xi) = \lambda \det J \ab{F}^{(\Side)}(0,\xi) \quad {\rm and} 
 \quad \beta^{(i,\Side)}(\xi) = \frac{\partial_1 \ab{F}^{(\Side)}(0,\xi) \cdot \partial_2\ab{F}^{(\Side)}(0,\xi)}
 {||\partial_2 \ab{F}^{(\Side)}(0,\xi)||^{2}}, 
\end{equation*}
with $J \ab{F}^{(\Side)}$ 
being the Jacobian of $\ab{F}^{(\Side)}$ and 
$
 \lambda = \underset{\lambda >0}{\rm argmin} \left(\sum_{j=0}^1 || 
 \alpha^{(i,i_j)}+(-1)^j ||^2_{L^2}
 \right),
$
cf.~\cite{KaVi19a}. Then, we define a particular space 
of $C^s$-smooth isogeometric spline functions of mixed degree~$(\ab{p}_1,\ab{p}_2)$ and regularity $\ab{\sm}$ over the multi-patch domain~$\overline{\Omega}$, denoted by~$\W^{\sS}$, as
\begin{equation*} \label{eq:space_same_degree}
{\W}^{\sS} = \left\{ \begin{array}{lll} 
 \phi \in L^2(\overline{\Omega}): & \phi \circ \ab{F}^{(i)}  \in {\mathcal{S}_{h}^{(\ab{p}_1,\ab{p}_2),\ab{\sm}}([0,1]^{2})}, \, 
i \in \mathcal{I}_{\Omega}, 
\\[0.1cm]  
&  \g_\ell^{(i,\LL)}
= \g_\ell^{(i,\RR)}
, \; \ell=0,\ldots, \sS,\; i \in \mathcal{I}_{\Gamma}^I,\; i_0,i_1 \in \mathcal{I}_{\Omega} \\[0.1cm]
&  \gC^{(i)}_\ell \in \mathcal{S}_h^{p_2-\ell,2\sm -\ell}([0,1]) , \; \ell=0,\ldots, \sS,\; i \in \mathcal{I}_{\Gamma}^I 
\end{array} 
\right\}, 
\end{equation*}
where $\gC_{\ell}^{(i)}$ denotes the equally valued functions $\g_\ell^{(i,\LL)}= \g_\ell^{(i,\RR)}$, $\ell =0,\ldots,\sS$, which describes a specific derivative of order~$\ell$ of $\phi$ across the inner edge~$\Gamma^{(i)}$, cf.~\cite{KaVi20b}. 
The spline space $\W^{\sS}$ can 
be constructed as in \cite{KaKoVi24b} as the direct sum of smaller subspaces that are associated with the individual patches~$\Omega^{(i)}$, $i \in \mathcal{I}_\Omega$, edges~$\Gamma^{(i)}$, $i \in \mathcal{I}_\Gamma$, and vertices $\bfm{\Xi}^{(i)}$, $i \in \mathcal{I}_\Xi$, i.e.
\begin{equation*} \label{eq:direct_sum_subspace}
\W^{\sS} = \left(\bigoplus_{i \in \mathcal{I}_\Omega} \mathcal{W}^\sS_{\Omega^{(i)}}\right) 
    \oplus \left(\bigoplus_{i \in \mathcal{I}_\Gamma} \mathcal{W}^\sS_{\Gamma^{(i)}}\right) 
    \oplus \left(\bigoplus_{i \in \mathcal{I}_\Xi} \mathcal{W}^\sS_{\bfm{\Xi}^{(i)}}\right) ,
\end{equation*}
with the difference of using for~$\mathcal{S}_h^{(\ab{p}_1, \ab{p}_2),\ab{\sm}}([0,1]^2)$ the modified mixed degree underlying spline described in the previous subsection.
In order to simplify the notation, let us define the three operators
$ {\phi}_{\Omega^{(i)}}: {\mathcal{S}_{h}^{(\ab{p}_1,\ab{p}_2),\ab{\sm}}([0,1]^{2})} \to \W^{\sS} $, 
$ \phi_{\Gamma^{(i)}}: {\mathcal{S}_{h}^{(\ab{p}_1,\ab{p}_2),\ab{\sm}}([0,1]^{2})} \times {\mathcal{S}_{h}^{(\ab{p}_1,\ab{p}_2),\ab{\sm}}([0,1]^{2})} \to \W^{\sS} $ and 
$ \phi_{\bfm{\Xi}^{(i)}} : \prod_{\rho=0}^{\nu_i-1} \mathcal{S}_{h}^{(\ab{p}_1,\ab{p}_2),\ab{\sm}}
\to \W^{\sS} $ as 
\begin{equation*}  \label{eq:PhiOmega2}
{\phi}_{\Omega^{(i)}}(\mathcal{S})(\bfm{x})  = 
\begin{cases}
   (\mathcal{S}\circ (\ab{F}^{(i)})^{-1})(\bfm{x}) \;
\mbox{ if }\f \, \bfm{x} \in \overline{\Omega^{(i)}},
\\ 0 \quad \mbox{ if }\f \, \bfm{x} \in \overline{\Omega} \backslash \overline{\Omega^{(i)}},
\end{cases} 
\end{equation*} 
\begin{equation*}  \label{eq:defphiGamma}
    \phi_{\Gamma^{(i)}}(\mathcal{S}_0,{\mathcal{S}_1}) (\bfm{x}) = 
\begin{cases}
  ({\mathcal{S}_\rho} \circ (\ab{F}^{(i_\rho)})^{-1})(\bfm{x}) \;
\mbox{ if }\f \, \bfm{x} \in \overline{\Omega^{(i_\rho)}}, \; \rho=0,1,
\\[0.15cm] 
  0\quad {\rm otherwise},
\end{cases}
\end{equation*}
\begin{equation*}  \label{eq:defphiXi}
  \phi_{\bfm{\Xi}^{(i)}}(\mathcal{S}_0,\ldots,\mathcal{S}_{\nu_i-1}) (\ab{x}) = 
  \begin{cases}
   \left( \mathcal{S}_\rho \circ (\ab{F}^{(i_\rho)})^{-1}\right)(\bfm{x}) \;
\mbox{ if }\f \, \bfm{x} \in \overline{\Omega^{(i_\rho)}},\; \rho=0,1,\ldots,{\nu}_i -1,
\\
0 \quad \mbox{ otherwise,}
\end{cases}
\end{equation*}
where $\overline{\Gamma^{(i)}}=\overline{\Omega^{(i_0)}} \cap \overline{\Omega^{(i_1)}}$, $i \in \mathcal{I}_{\Gamma}^I$, $i_0,i_1 \in \mathcal{I}_{\Omega}$, and $\bfm{\Xi}^{(i)} = \cap_{\rho=0}^{\nu_i-1} \overline{\Omega^{(i_\rho)}}$, $i \in \mathcal{I}_{\Xi}^{I}$.
In the following, we will describe the design of the single subspaces, which will be constructed as the span of corresponding basis functions.
\subsubsection{The patch subspace $\W^{\sS}_{\Omega^{(i)}}$}
We have to distinguish between different cases with respect to the number of inner edges of the multi-patch domain~$\overline{\Omega}$ that are contained in the patch~$\overline{\Omega^{(i)}}$. 
The construction depends on whether we have four, three, two adjacent, two opposite or only one inner edge contained in $\overline{\Omega^{(i)}}$. The case with four inner edges has been already described in \cite{KaKoVi24b}, while all the other cases are modifications that are needed in order to reduce the degree of the functions to $p_1=s+1$, and hence the number of degrees of freedom also wherever possible near the boundary of the multi-patch domain $\overline{\Omega}$. 

\paragraph{Four inner edges}

The patch subspace $\W^{\sS}_{\Omega^{(i)}}$ is given as
\begin{align*}
& \W^{\sS}_{\Omega^{(i)}}  = \Span \left\{ {\phi}_{\Omega^{(i)}}(N_{j_1,j_2}^{\ab{p}_1,\ab{\sm}}) \; |\;  j_1,j_2= \sm+1,\ldots, n_{p_1}-\sm-2  \right\} \oplus \\ & \, \Span \left\{ 
{\phi}_{\Omega^{(i)}}
(\overline{N}_{j_1}^{{p}_1,\sm} \overline{N}_{j_2}^{{p}_1,{\sm}})\; |\;  j_1=1,\ldots,\sm, n_{p_1}-\sm-1, \ldots,n_{p_1}-2; \; j_2=1,\ldots,n_{p_1}-2 \right\} \oplus\\
& \, \Span \left\{ 
{\phi}_{\Omega^{(i)}}
(\overline{N}_{j_1}^{{p}_1,\sm} \overline{N}_{j_2}^{{p}_1,{\sm}})\, |\; j_1=\sm+1,\ldots,n_{p_1}-\sm-2; \; j_2=1,\ldots,\sm, n_{p_1}-\sm-1, \ldots,n_{p_1}-2  \right\}.
\end{align*}

\paragraph{Three inner edges} Assuming that the inner edges 
are on the left, on the bottom and on the top, then the subspace~$\W^{\sS}_{\Omega^{(i)}}$ is equal to 
\begin{align*}
& \W^{\sS}_{\Omega^{(i)}}  = \Span \left\{ {\phi}_{\Omega^{(i)}}(N_{j_1,j_2}^{\ab{p}_1,\ab{\sm}}) \; |\;  j_1,j_2= \sm+1,\ldots, n_{p_1}-\sm-2  \right\} \oplus \\ 
& \, \Span \left\{ 
{\phi}_{\Omega^{(i)}}
(\overline{N}_{j_1}^{{p}_1,\sm} \overline{N}_{j_2}^{{p}_1,{\sm}})\;|\;  j_1=1,\ldots,\sm; \; j_2=1,\ldots,n_{p_1}-2 \right\} \oplus\\
& \, \Span \left\{ 
{\phi}_{\Omega^{(i)}}
(\overline{N}_{j_1}^{{p}_1,\sm} \overline{N}_{j_2}^{{p}_1,{\sm}})\,|\;  j_1=\sm+1,\ldots,n_{p_1}-\sm-2; \; j_2=1, \ldots,\sm,n_{p_1}-\sm-1,\ldots,n_{p_1}-2 \right\}.
\end{align*}
\paragraph{Two adjacent inner edges} Let the left and the bottom edge be the two inner edges. Then 
\begin{align*}
\W^{\sS}_{\Omega^{(i)}} & = \Span \left\{ {\phi}_{\Omega^{(i)}} (N_{j_1,j_2}^{\ab{p}_1,\ab{\sm}}) \;|\;  j_1,j_2= \sm+1,\ldots, n_{p_1}-\sm-2  \right\} \oplus \\ & \; \Span \left\{ 
{\phi}_{\Omega^{(i)}}
(\overline{N}_{j_1}^{{p}_1,\sm} \overline{N}_{j_2}^{{p}_1,{\sm}})\;|\;  j_1=1,\ldots,\sm; \; j_2=1,\ldots,n_{p_1}-\sm - 2 \right\} \oplus\\
& \; \Span \left\{ 
{\phi}_{\Omega^{(i)}}
(\overline{N}_{j_1}^{{p}_1,\sm} \overline{N}_{j_2}^{{p}_1,{\sm}})\;|\;  j_1=\sm+1,\ldots,n_{p_1}-\sm-2; \; j_2=1, \ldots,\sm \right\}.
\end{align*}

\paragraph{Two opposite inner edges} Assuming that the two inner edges are the bottom and the top one, then the subspace~$\W^{\sS}_{\Omega^{(i)}}$ is given as
\begin{align*}
& \W^{\sS}_{\Omega^{(i)}}  = \Span \left\{ {\phi}_{\Omega^{(i)}}(N_{j_1,j_2}^{\ab{p}_1,\ab{\sm}}) \;|\;  j_1,j_2= \sm+1,\ldots, n_{p_1}-\sm-2  \right\} \oplus \\ 
& \,  \Span \left\{ 
{\phi}_{\Omega^{(i)}}
(\overline{N}_{j_1}^{{p}_1,\sm} \overline{N}_{j_2}^{{p}_1,{\sm}}) \,|\;  j_1=\sm+1,\ldots,n_{p_1}-\sm-2; \; j_2=1, \ldots,\sm,n_{p_1}-\sm-1,\ldots,n_{p_1}-2 \right\}.
\end{align*}

\paragraph{One inner edge} Let the bottom edge be the only inner edge. Then 
\begin{align*}
\W^{\sS}_{\Omega^{(i)}} & = \Span \left\{ {\phi}_{\Omega^{(i)}}(N_{j_1,j_2}^{\ab{p}_1,\ab{\sm}}) \;|\;  j_1,j_2= \sm+1,\ldots, n_{p_1}-\sm-2  \right\} \oplus \\ 
& \; \Span \left\{ 
{\phi}_{\Omega^{(i)}}
(\overline{N}_{j_1}^{{p}_1,\sm} \overline{N}_{j_2}^{{p}_1,{\sm}})\;|\;  j_1=\sm+1,\ldots,n_{p_1}-\sm-2; \; j_2=1, \ldots,\sm \right\}.
\end{align*}


\subsubsection{The edge subspace $\W^{\sS}_{\Gamma^{(i)}}$}

Let us start with the construction of the edge subspace~$\W^{\sS}_{\Gamma^{(i)}}$ for an \emph{inner edge}~$\Gamma^{(i)}$, $i \in \mathcal{I}_{\Gamma}^I$, with $\overline{\Gamma^{(i)}}=\overline{\Omega^{(i_0)}} \cap \overline{\Omega^{(i_1)}}$, $i_0,i_1 \in \mathcal{I}_{\Omega}$, assuming that 
$\ab{F}^{(i_0)}(0,\xi) = \ab{F}^{(i_1)}(0,\xi)$, $\xi \in [0,1]$, cf. 
Fig.~\ref{fig:multipatchCase} (left). This subspace has already been described in \cite{KaVi20b, KaKoVi24b} and is recalled here for the sake of completeness. It is generated as
\begin{equation*} \label{eq:spaceW0hGamma}
\W^\sS_{\Gamma^{(i)}} =  \Span \left\{\phi_{\Gamma^{(i)}}(\g_{\Gamma^{(i)}; j_1,j_2}^{(i_0)},\g_{\Gamma^{(i)}; j_1,j_2}^{(i_1)}) \; | \; j_2=2\sm+1-j_1,
\ldots, 
k(j_1+1);
\; j_1=0,\ldots,\sm \right\},
\end{equation*} 
with 
the spline functions
\begin{equation}   \label{eq:basisFunctionsGenericG}
 \g_{\Gamma^{(i)}; j_1,j_2}^{(\Side)}(\xi_1,\xi_2) = 
\gamma_{j_1}
   \sum_{\ell=j_1}^{\sm}  {\ell \choose j_1} \left( \beta^{(\Side)}(\xi_2)\right)^{\ell-j_1} 
 \hspace{-0.07cm} \left( \alpha^{(\Side)}(\xi_2)\right)^{j_1} \hspace{-0.05cm} \dd^{\ell-j_1} \hspace{-0.07cm}\left( N_{j_2}^{p_2-j_1,
 2\sm-j_1}(\xi_2)\right) \,  M_\ell^{p_2,\sm} (\xi_1),
\end{equation}
where   
\begin{equation*}  \label{eq:M012}
 \gamma_{j_1} = h^{-j_1} \prod_{\rho=0}^{j_1-1}(p_2-\rho) \quad {\rm and} \quad
  M_\ell^{p_2,\sm}(\xi) = \sum_{j=\ell}^\sm \frac{ {j \choose \ell} h^\ell}{\prod_{\rho=0}^{\ell-1} (p_2-\rho)} N_j^{p_2,\sm}(\xi) , \quad \ell = 0,1,\ldots,\sm.
\end{equation*}

For a \emph{boundary edge} $\Gamma^{(i)}$, $i \in \mathcal{I}_{\Gamma}^{B}$, with $\overline{\Gamma^{(i)}} \subseteq \overline{\Omega^{(i_0)}}$, $i_0 \in \mathcal{I}_{\Omega}$, 
we assume 
that its closure $\overline{\Gamma^{(i)}}$ 
is given by $\ab{F}^{(i_0)}(\{0 \} \times [0,1]) $. The construction of the boundary 
edge subspace $\W^{\sS}_{\Gamma^{(i)}}$ depends on whether $\overline{\Gamma^{(i)}}$ contains two, one or no boundary vertex~$\ab{\Xi}^{(i)}$, $i \in \mathcal{I}_{\Xi}^{B}$, of patch valency one.
\paragraph{Two boundary vertices of patch valency one} The edge subspace $\W^\sS_{\Gamma^{(i)}}$ is equal to
\begin{align*} 
\W^\sS_{\Gamma^{(i)}} & =  \Span \left\{ {\phi}_{\Omega^{(i_0)}} (N_{j_1,j_2}^{\ab{p}_1,\ab{\sm}}) \; |\; j_2= 2\sm+1-j_1,\ldots, n_{p_1}+j_1-(2\sm+2);\;
j_1=0,1,\ldots,\sm \right\}.
\end{align*}
\paragraph{One boundary vertex of patch valency one} Let $\ab{F}^{(i_0)}(\ab{0})$ be the boundary vertex of patch valency one. Then, we have 
\begin{align*} 
\W^\sS_{\Gamma^{(i)}} & =  \Span \left\{ {\phi}_{\Omega^{(i_0)}}(N_{j_1,j_2}^{\ab{p}_1,\ab{\sm}}) \; |\; j_2= 2\sm +1-j_1 ,\ldots, n_{p_1}-\sm-2;\;
j_1=0,1,\ldots,\sm \right\} \oplus \\
& \; \Span \left\{ 
{\phi}_{\Omega^{(i_0)}}({N}_{j_1}^{\,{p}_1,{\sm}} \, \overline{N}_{j_2}^{\,{p}_1,{\sm}}) \; |\; j_2= n_{p_1}-\sm-1 , \ldots , n_{p_1}+j_1-\sm-2  ;\;
j_1=1,\ldots,\sm \right\}.
\end{align*} 
\paragraph{No boundary vertex of patch valency one} The edge subspace $\W^\sS_{\Gamma^{(i)}}$ is given as
\begin{align*} 
& \W^\sS_{\Gamma^{(i)}}  =  \Span \left\{ {\phi}_{\Omega^{(i_0)}}(N_{j_1,j_2}^{\ab{p}_1,\ab{\sm}}) \; |\; j_2= \sm +1 ,\ldots, n_{p_1}-\sm -2;\;
j_1=0,1,\ldots,\sm \right\} \oplus \\
&  
 \Span \left\{
{\phi}_{\Omega^{(i_0)}}({N}_{j_1}^{\,{p}_1,{\sm}} \, \overline{N}_{j_2}^{\,{p}_1,{\sm}}) |\, j_2= \sm+1-j_1, \ldots , \sm , n_{p_1} - \sm-1, \ldots, n_{p_1}+j_1-\sm-2 ;
j_1=1,\ldots,\sm \right\}\hspace{-0.05cm}.
\end{align*}  

\subsubsection{The vertex subspace~$\W^{\sS}_{\ab{\Xi}^{(i)}}$}

Let us denote the patch valency of a vertex~$\ab{\Xi}^{(i)}$, $i \in \mathcal{I}_{\Xi}$, by $\nu_i$. The construction of the vertex subspace~$\W^{\sS}_{\ab{\Xi}^{(i)}}$ differs whether we have an inner or boundary vertex~$\ab{\Xi}^{(i)}$. 
Let us first briefly recall the case where $\ab{\Xi}^{(i)}$ is an \emph{inner vertex}, i.e.~$i \in \mathcal{I}_{\Xi}^{I}$, assuming that all patches $\Omega^{(i_\rho)}$, $\rho=0,1,\ldots,\nu_i-1$, around the vertex~$\bfm{\Xi}^{(i)}$, i.e. $\bfm{\Xi}^{(i)} = \cap_{\rho=0}^{\nu_i-1} \overline{\Omega^{(i_\rho)}}$, are parameterized as shown in Fig.~\ref{fig:multipatchCase} (right), and relabel the common edges $\overline{\Omega^{(i_\rho)}} \cap \overline{\Omega^{(i_{\rho+1})}}$, $\rho=0,1,\ldots,\nu_i-1$, by $\Gamma^{(i_{\rho+1})}$, where~$\rho$ is taken modulo~$\nu_i$. Then, the vertex subspace~$\W^{\sS}_{\bfm{\Xi}^{(i)}}$ is defined as the span of functions $\phi_{\bfm{\Xi}^{(i)}}(\mathcal{F}_0,\ldots,\mathcal{F}_{\nu_i-1})$,
where 
$$
\mathcal{F}_\rho = f_{i_\rho}^{\Gamma^{(i_\rho)}}
  +  f_{i_\rho}^{\Gamma^{(i_{\rho+1})}} \, - 
 f_{i_\rho}^{\Omega^{(i_\rho)}}, \quad \rho=0,1,\ldots,{\nu}_i -1,
$$
with the spline functions
\begin{align} \label{eq:g_vertex2} 
    f_{i_\rho}^{\Gamma^{(i_{\rho+\tau})}}(\xi_1,\xi_2)  & = \sum_{j_1=0}^\sm \sum_{j_2=0}^{2\sm -j_1} a^{\Gamma^{(i_{\rho+\tau})}}_{j_1,j_2} \, 
  f_{\Gamma^{(i_{\rho+\tau}); j_1,j_2}}^{(i_\rho)} (\xi_{2-\tau},\xi_{1+\tau}), \quad { a^{\Gamma^{(i_{\rho+\tau})}}_{j_1,j_2} \in \R},
  \quad \tau = 0,1, \nonumber \\[-0.3cm] 
  \\[-0.3cm]
 f_{i_\rho}^{\Omega^{(i_\rho)}}(\xi_1,\xi_2)  & =  \sum_{j_1=0}^{\sm} \sum_{j_2=0}^{\sm } a^{(i_\rho)}_{j_1,j_2} N_{j_1,j_2}^{\ab{p}_2,\ab{\sm}} 
 (\xi_1,\xi_2), \quad {a_{j_1,j_2}^{(i_\rho)} \in \R }, \nonumber
\end{align}
and with the spline functions~$f_{\Gamma^{(i_{\rho+\tau}); j_1,j_2}}^{(i_\rho)}$, $\tau=0,1$, given in~\eqref{eq:basisFunctionsGenericG}. 


Thereby, the isogeometric functions~
$\phi_{\bfm{\Xi}^{(i)}}(\mathcal{F}_0,\ldots,\mathcal{F}_{\nu_i-1})$ are determined via 
selections of the coefficients $a^{\Gamma^{(i_{\rho+\tau})}}_{j_1,j_2}$, $a_{j_1,j_2}^{(i_\rho)}$, which form a basis of the kernel of the homogeneous linear system
\begin{equation} \label{eq:vertex_homogeneous_system}
  \partial_{1}^{\ell_1}  
  \partial_{2}^{\ell_2} 
   \left( f_{i_\rho}^{\Gamma^{(i_{\rho+1})}} - f_{i_\rho}^{\Gamma^{(i_\rho)}} \right)  (\bfm{0}) = 0 \quad \mbox{ and } \quad
    \partial_{1^{}}^{\ell_1}
   \partial_{2^{}}^{\ell_2}
   \left( f_{i_\rho}^{\Gamma^{(i_{\rho+1})}} - f_{i_\rho}^{\Omega^{(i_\rho)}} \right)  (\bfm{0}) = 0, 
\end{equation} 
for $ 0 \leq \ell_1, \ell_2 \leq \sm$ and $\rho =0,1, \ldots, \nu_{i}-1$.
In our examples in Section~\ref{section_Numerical_examples}, the basis of the kernel of~\eqref{eq:vertex_homogeneous_system} will be constructed by means of the concept of minimal determining sets, cf. \cite{KaVi17a,LaSch07}.

Let now $\ab{\Xi}^{(i)}$ be a \emph{boundary vertex}, i.e. $i \in \mathcal{I}_{\Xi}^{B}$, assuming that the two boundary edges are labeled as $\Gamma^{(i_0)}$ and $\Gamma^{(i_{\nu_{i}})}$.
The construction of the boundary vertex subspace $\W^{\sS}_{\bfm{\Xi}^{(i)}}$ differs whether we have a boundary vertex~$\bfm{\Xi}^{(i)}$, $i \in \mathcal{I}_{\Xi}^B$, of patch valency~$\nu_i \geq 3$, $\nu_i=2$ or $\nu_i=1$:
\paragraph{A patch valency of $\nu_i \geq 3$}
The vertex subspace~$\W^{\sS}_{\ab{\Xi}^{(i)}}$ is constructed as for an inner vertex with the difference that the functions~$f_{\Gamma^{(i_0)};j_1,j_2}^{(i_0)}$ and $f_{\Gamma^{(i_{v_i})};j_1,j_2}^{(i_{v_{i}-1})}$ in~\eqref{eq:g_vertex2} for the patches~$\Omega^{(i_0)}$ and $\Omega^{(i_{v_i-1})}$ are just the standard B-splines $N_{j_1,j_2}^{\ab{p}_2,\ab{\sm}}$ for $j_1, j_2 =0,1,\ldots,\sm$, and 
splines ${N}_{j_1}^{\,{p}_1,{\sm}} \, \overline{N}_{j_2-\sm}^{\,{p}_1,{\sm}}$ for indices $j_1=0,1,\ldots,\sm-1$ and $j_2=\sm+1,\ldots,2\sm-j_1$.

\paragraph{A patch valency of $\nu_i =2$}
Assuming that $\bfm{\Xi}^{(i)} = \ab{F}^{(i_0)}(\ab{0}) = \ab{F}^{(i_1)}(\ab{0})$, $i_0,i_1 \in \mathcal{I}_{\Omega}$, that the common edge $\overline{\Omega^{(i_0)}} \cap \overline{\Omega^{(i_1)}}$ is denoted by $\overline{\Gamma^{(j_0)}}$, $j_0 \in \mathcal{I}_{\Gamma}^{I}$, and that the two patches~$\Omega^{(i_0)}$ and $\Omega^{(i_1)}$ are parameterized as shown in Fig.~\ref{fig:multipatchCase} (left), then the vertex subspace~$\mathcal{W}^{\sS}_{\bfm{\Xi}^{(i)}}$ is directly constructed as
\begin{align*} \label{eq:spaceWXB}
\mathcal{W}^{\sS}_{\bfm{\Xi}^{(i)}} =&  \Span \left\{ \widetilde{\phi}_{\bfm{\Xi}^{(i)}; j_1,j_2} \; |\;   
j_1=0,1,\ldots,3 \sm; \; j_2 = \begin{cases}
                         0,1,\ldots, 2\sm-j_1 & \mbox{if }j_1 \leq 2\sm \\
                         0,1,\ldots, 3 \sm-j_1 & \mbox{if }j_1 > 2\sm 
                                \end{cases}
 \; \right \} ,
\end{align*}
with the isogeometric functions
\begin{equation*} \label{eq:phi_vertex_2}
 \widetilde{\phi}_{\bfm{\Xi}^{(i)};j_1,j_2} 
 = 
    \begin{cases} 
     \phi_{\Gamma^{(j_0)}}
     (\g_{\Gamma^{(j_0)}; j_1,j_2}^{(i_0)},\g_{\Gamma^{(j_0)}; j_1,j_2}^{(i_1)})
     & \mbox{if }j_1=0,\ldots ,\sm\\ 
     {\phi}_{\Omega^{(i_0)}} (\overline{N}_{j_1-\sm}^{\,{p}_1,{\sm}} \, {N}_{j_2}^{\,{p}_1,{\sm}})
     & \mbox{if }j_1=\sm+1,\ldots,2\sm\\
     {\phi}_{\Omega^{(i_1)}}(\overline{N}_{j_1-2\sm}^{\,{p}_1,{\sm}} \, {N}_{j_2}^{\,{p}_1,{\sm}})
     & \mbox{if }j_1=2 \sm +1,\ldots,3 \sm.
    \end{cases}
\end{equation*}

\paragraph{A patch valency of $\nu_i =1$}
Assuming that $\bfm{\Xi}^{(i)}= \ab{F}^{(i_0)}(\ab{0})$, $i_0 \in \mathcal{I}_{\Omega}$, the vertex subspace~$\mathcal{W}^{\sS}_{\bfm{\Xi}^{(i)}}$ is simply 
given as 
$$
\mathcal{W}^{\sS}_{\bfm{\Xi}^{(i)}} =  \Span \left\{ {\phi}_{\Omega^{(i_0)}}(N_{j_1,j_2}^{\ab{p}_1,\ab{\sm}}) \; |\;   
j_1,j_2 =  0,1,\ldots, 2 \sm; \; j_1+j_2 \leq 2\sm \right\}.
$$



\section{The selection of collocation points} \label{subsec:collocationPoints}

In this section, we will present two possible sets of collocation points for the use of the $C^s$-smooth mixed degree isogeometric spline space~$\W^{\sS}$ from Section~\ref{sec:problem_statement} as discretization space for the isogeometric multi-patch collocation methods introduced in Section~\ref{sec:collocation} for solving the Poisson's and the biharmonic equation. Thereby, the global collocation points will be selected via local collocation points $\bfm{\loc}^{(i)}_{\bfm{j}} = (\loc^{(i)}_{j_1},\loc^{(i)}_{j_2}) \in [0,1]^2 $ for each patch~$\overline{\Omega^{(i)}}$, $i \in \mathcal{I}_{\Omega}$, as $\ab{F}^{(i)}(\bfm{\loc}^{(i)}_{\bfm{j}}) \in \overline{\Omega^{(i)}}$. For the sake of simplicity, we will use the same local collocation points $\bfm{\loc}_{\bfm{j}} = (\loc_{j_1},\loc_{j_2}) \in [0,1]^2$ for each patch~$\overline{\Omega^{(i)}}$, $i \in \mathcal{I}_{\Omega}$. If more local collocation points from different patches define the same global collocation point, then we keep just the one for the 
patch~$\overline{\Omega^{(i)}}$ with the 
lowest patch index~$i$, cf.~\eqref{eq:patch_index_selection}, to prevent the possible repetition of global collocation points. 
We will study now the two different choices of 
local collocation points~$\bfm{\loc}_{\bfm{j}}$ which will then define the global collocation points in the multi-patch isogeometric collocation techniques in Section~\ref{sec:collocation}.  For both sets of collocation points, we will restrict ourselves to the case $s=2$ and $s=4$.

\subsection{The mixed degree Greville points} The first choice of collocation points is dedicted to the generalization of the Greville points for the underlying spline spaces $\mathcal{S}^{\ab{p}_i,\ab{\sm}}_h([0,1]^2)$, $i=1,2$, to the mixed degree Greville points for the mixed degree underlying spline space $\mathcal{S}^{(\ab{p}_1,\ab{p}_2),\ab{\sm}}_h([0,1]^2)$. Let us first recall the univariate Greville points for the space $\mathcal{S}^{{p}_i,\sm}_h([0,1])$
which are defined as 
\[
 \loc_{j}^{p_i,\sm}=\frac{t_{j+1}^{p_i,\sm} + \ldots + t_{j+p_i}^{p_i,\sm}}{p_i}, \quad  j \in \{0,1,\ldots, n_{p_i}-1 \}, \; i=1,2.
\]
We now have to prescribe one Greville point to each of the basis functions from the spaces $\mathcal{S}_1([0,1]^2)$,  $\overline{\mathcal{S}}_1([0,1]^2)$ and $\mathcal{S}_2([0,1]^2)$. Clearly, we prescribe the collocation points $(\loc_{j_1}^{p_i,\sm},\loc_{j_2}^{p_i,\sm})$ to the functions $N_{(j_1,j_2)}^{\ab{p}_i,\ab{\sm}}$ from the spaces $\mathcal{S}_1([0,1]^2)$ and $\mathcal{S}_2([0,1]^2)$. It remains to find suitable collocation points $(\overline{\loc}_{j_1}^{\,p_1,\sm},\overline{\loc}_{j_2}^{\,p_1,\sm})$ for the truncated basis functions 
from the space $\overline{\mathcal{S}}_1([0,1]^2)$. 
More precisely, we have to determine collocation points $\overline{\loc}_{j}^{\,p_1,\sm}$ for the truncated univariate B-splines $\overline{N}_{j}^{p_1,\sm}$.  
For the case when $j \in \{\sm+1,\ldots,n_{p_1}-\sm-2\}$, we clearly take $\overline{\loc}_{j}^{\,p_1,\sm} = \loc_{j}^{p_1,\sm}$. For the other two cases, namely where the truncation has an effect, i.e., for $j \in \{1,\ldots,\sm\}$ and for $j \in \{n_{p_1}-\sm-1,\ldots,n_{p_1}-2\}$, we select them in a symmetric way. Therefore, let us explain the selection of collocation points only for the index set $
\{1,\ldots,\sm\}$. In order to avoid the overlapping of collocation points corresponding to different sets of basis functions, we have to satisfy the following relations
\begin{equation} \label{eq:GrevilleRelations}
\loc_{0}^{p_2,\sm}<\cdots<\loc_{\sm}^{p_2,\sm}< 
\overline{\loc}_{1}^{\,p_1,\sm} < \cdots <
\overline{\loc}_{\sm}^{\,p_1,\sm} < 
\loc_{\sm+1}^{p_1,\sm} < \cdots < \loc_{n_{p_1}-\sm-2}^{p_1,\sm}.
\end{equation}
The following proposition gives an appropriate selection. 
\begin{prop}
    Let $s \in \{2,4\} $, and let
    \begin{equation} \label{eq:truncatedGreville}
        \overline{\loc}_{i}^{\,p_1,\sm}=\loc_{\sm+i}^{p_2,\sm} \quad {\rm and} \quad 
        \overline{\loc}_{\frac{\sm}{2}+i}^{\,p_1,\sm} =\loc_{\frac{\sm}{2}+i}^{p_1,\sm}, \quad i=1,\ldots,\frac{\sm}{2}.
    \end{equation}
     Then relations \eqref{eq:GrevilleRelations} are fulfilled. 
\end{prop}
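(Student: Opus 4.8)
The plan is to verify the chain of inequalities in \eqref{eq:GrevilleRelations} by substituting the explicit choices from \eqref{eq:truncatedGreville} and reducing everything to comparisons of Greville abscissae of the two univariate spline spaces $\mathcal{S}_h^{p_1,s}([0,1])$ and $\mathcal{S}_h^{p_2,s}([0,1])$, both of which sit on the same uniform mesh of size $h = \tfrac{1}{k+1}$. Recall that for an open knot vector the first $p_i+1$ knots coincide at $0$, so the early Greville points have the closed form $\loc_j^{p_i,s} = \tfrac{1}{p_i}\sum_{m=j+1}^{j+p_i} t_m^{p_i,s}$; for the relevant range of small indices the knots involved are either $0$ or the first few interior knots $h, 2h, \dots$, so each $\loc_j^{p_i,s}$ is an explicit rational multiple of $h$ depending only on $j$, $p_i$ and $s$. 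I would first write down these closed forms for $j = 0, \dots, s+1$ in the space of degree $p_2 = 2s+1$ and for $j = 1, \dots, s+1$ in the space of degree $p_1 = s+1$, treating the two concrete cases $s=2$ (so $p_1=3$, $p_2=5$) and $s=4$ (so $p_1=5$, $p_2=9$) separately since the statement only claims these.

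Next I would break the asserted chain into three blocks and check each. The block $\loc_0^{p_2,s} < \cdots < \loc_s^{p_2,s}$ is just the strict monotonicity of Greville abscissae of a single spline space, which is immediate from the fact that consecutive Greville points differ by $\tfrac1{p_2}(t_{j+p_2+1}^{p_2,s} - t_{j+1}^{p_2,s}) > 0$. The block $\overline{\loc}_1^{\,p_1,s} < \cdots < \overline{\loc}_s^{\,p_1,s}$ requires interleaving: by \eqref{eq:truncatedGreville} the first $\tfrac s2$ of these equal $\loc_{s+1}^{p_2,s}, \dots, \loc_{3s/2}^{p_2,s}$ and the last $\tfrac s2$ equal $\loc_{s/2+1}^{p_1,s},\dots,\loc_s^{p_1,s}$, so monotonicity within each half is again automatic and the only genuine inequality is the junction $\loc_{3s/2}^{p_2,s} < \loc_{s/2+1}^{p_1,s}$ — a comparison between a Greville point of the high-degree space and one of the low-degree space, which I would settle by plugging in the explicit values for $s=2$ and $s=4$. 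Likewise the transition $\loc_s^{p_2,s} < \overline{\loc}_1^{\,p_1,s} = \loc_{s+1}^{p_2,s}$ from the first block to the second is again monotonicity in the degree-$p_2$ space, and the transition $\overline{\loc}_s^{\,p_1,s} = \loc_s^{p_1,s} < \loc_{s+1}^{p_1,s}$ into the third block together with the rest of that block is monotonicity in the degree-$p_1$ space. So in fact only the single crossover inequality $\loc_{3s/2}^{p_2,s} < \loc_{s/2+1}^{p_1,s}$ is not a triviality.

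Therefore the main (and essentially only) obstacle is that one crossover comparison: one must check, for $s=2$, that $\loc_{3}^{5,2} < \loc_{2}^{3,2}$, and for $s=4$, that $\loc_{6}^{9,4} < \loc_{3}^{5,4}$. I expect each to come down to a short arithmetic check once the relevant knots are written out: for small indices the knot vector of $\mathcal{S}_h^{p_i,s}([0,1])$ consists of $p_i+1$ copies of $0$ followed by interior knots $h, 2h, \dots$ each with multiplicity $p_i - s$, and since here $p_i - s = 1$ in every case the interior knots are simple and equal to $h, 2h, 3h, \dots$. Substituting, $\loc_{3}^{5,2} = \tfrac15(0 + 0 + h + 2h + 3h) = \tfrac{6h}{5}$ while $\loc_{2}^{3,2} = \tfrac13(h + 2h + 3h)=2h$, and $\loc_6^{9,4}$ and $\loc_3^{5,4}$ are handled the same way, each reducing to a comparison of two rational multiples of $h$. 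Assembling the three blocks with their (trivial) internal monotonicity and the two (trivial) degree-internal transitions plus this one verified crossover yields the full chain \eqref{eq:GrevilleRelations}, which is what the proposition asserts; by the symmetric construction described in the text the analogous chain at the right endpoint holds as well, so no separate argument is needed there.
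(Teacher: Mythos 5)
Your reduction of \eqref{eq:GrevilleRelations} to a single crossover inequality is exactly the paper's strategy: all blocks and transitions are strict monotonicity of Greville abscissae within one of the two spaces, and the only genuine content is $\loc_{3s/2}^{p_2,\sm} < \loc_{s/2+1}^{p_1,\sm}$ (the paper phrases this as $\overline{\loc}_{s/2}^{\,p_1,\sm} < \overline{\loc}_{s/2+1}^{\,p_1,\sm}$ and proves it by showing the left side is $\tfrac{2s-1}{2s+1}h < h$ and the right side is $h + h\tfrac{s(s-2)}{8(s+1)} \geq h$). However, your verification of that one non-trivial inequality is carried out with the wrong knot vectors, and this is a genuine gap. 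The paper's space $\mathcal{S}_h^{p,r}([0,1])$ has interior knots of multiplicity $p-r$; for the high-degree space this is $p_2 - s = s+1$, not $1$, so your claim that ``$p_i - s = 1$ in every case'' fails for $i=2$ and the interior knots of $\mathcal{S}_h^{5,2}$ and $\mathcal{S}_h^{9,4}$ are not simple. You also miscount the $p_1+1$ repeated boundary knots: with $t_0=\cdots=t_3=0$, $t_4=h$, $t_5=2h$ one gets $\loc_2^{3,2} = \tfrac13(0+h+2h) = h$, not $2h$, and with $t_4=t_5=0$, $t_6=t_7=t_8=h$ one gets $\loc_3^{5,2} = \tfrac15(0+0+h+h+h)=\tfrac{3h}{5}$, not $\tfrac{6h}{5}$.

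For $s=2$ your erroneous numbers still happen to satisfy the inequality, so the mistake is masked; but for $s=4$, which you defer to ``the same way,'' your setup would give $\loc_6^{9,4} = \tfrac{7h}{3}$ and $\loc_3^{5,4} = \tfrac{6h}{5}$, and the required inequality $\tfrac{7h}{3} < \tfrac{6h}{5}$ is false --- so following your recipe one would wrongly conclude the proposition fails for $s=4$. The correct values are $\loc_6^{9,4} = \tfrac19(0+0+0+5h+2h) \cdot \text{(with multiplicity-}5\text{ interior knots)} = \tfrac{7h}{9}$ and $\loc_3^{5,4} = \tfrac15(0+0+h+2h+3h) = \tfrac{6h}{5}$, for which the crossover inequality does hold. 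So the architecture of your argument is sound and matches the paper, but the one step that actually needs checking is checked incorrectly and needs to be redone with the correct knot multiplicities.
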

\begin{proof}
    We only have to prove that $\overline{\loc}_{\frac{\sm}{2}}^{\,p_1,\sm} < \overline{\loc}_{\frac{\sm}{2}+1}^{\,p_1,\sm}$, since all other relations are trivially fulfilled. By \eqref{eq:truncatedGreville}, this is equivalent to show that
    $\loc_{\sm+\frac{\sm}{2}}^{p_2,\sm} < \loc_{\frac{\sm}{2}+1}^{p_1,\sm}$, which follows by
     \begin{eqnarray*}
    && \hspace{-0.7cm} \loc_{\sm+\frac{\sm}{2}}^{p_2,\sm} = \frac{ 
    \left(p_2-\frac{3\sm}{2}\right)0 + (p_2-\sm)h + \left(-p_2 +\frac{5\sm}{2}\right) 2h}{p_2}  = \frac{ (\sm+1)h + \left(\frac{\sm}{2}-1\right)2h}{2\sm+1} = \frac{2\sm-1}{2\sm+1} h < h,\\
    && \hspace{-0.7cm} \loc_{\frac{\sm}{2}+1}^{p_1,\sm}  =  \frac{\left(p_1-\frac{\sm}{2}-1\right)0 + (p_1-\sm)\left(h+2h+\ldots +\left(\frac{\sm}{2}+1\right)h\right)}{p_1} = 
    \frac{h\left( 1+2+\ldots + (\frac{\sm}{2}+1) \right)}{\sm+1} \\
     && \;\; =  h \frac{\left(\frac{\sm}{2}+1 \right) \left( \frac{\sm}{2}+2\right))}{2(\sm+1)} = h + h \frac{\sm(\sm-2)}{8(\sm+1)} \geq h.
    \end{eqnarray*}
\end{proof}
To summarize, we obtain the mixed degree Greville points for the mixed degree underlying spline space $\mathcal{S}_{h}^{(\ab{p}_1,\ab{p}_2),\ab{\sm}}([0,1]^2)$ by collecting the Greville points corresponding to the basis functions from the spaces $\mathcal{S}_{1}([0,1]^2)$, $\overline{\mathcal{S}}_{1}([0,1]^2)$ and $\mathcal{S}_{2}([0,1]^2)$, cf. Fig.~\ref{fig:Greville59} for the case where all edges of $[0,1]^2$ correspond to inner edges of the multi-patch domain~$\overline{\Omega}$.
\begin{figure}
    \centering
    \includegraphics[scale=0.33]{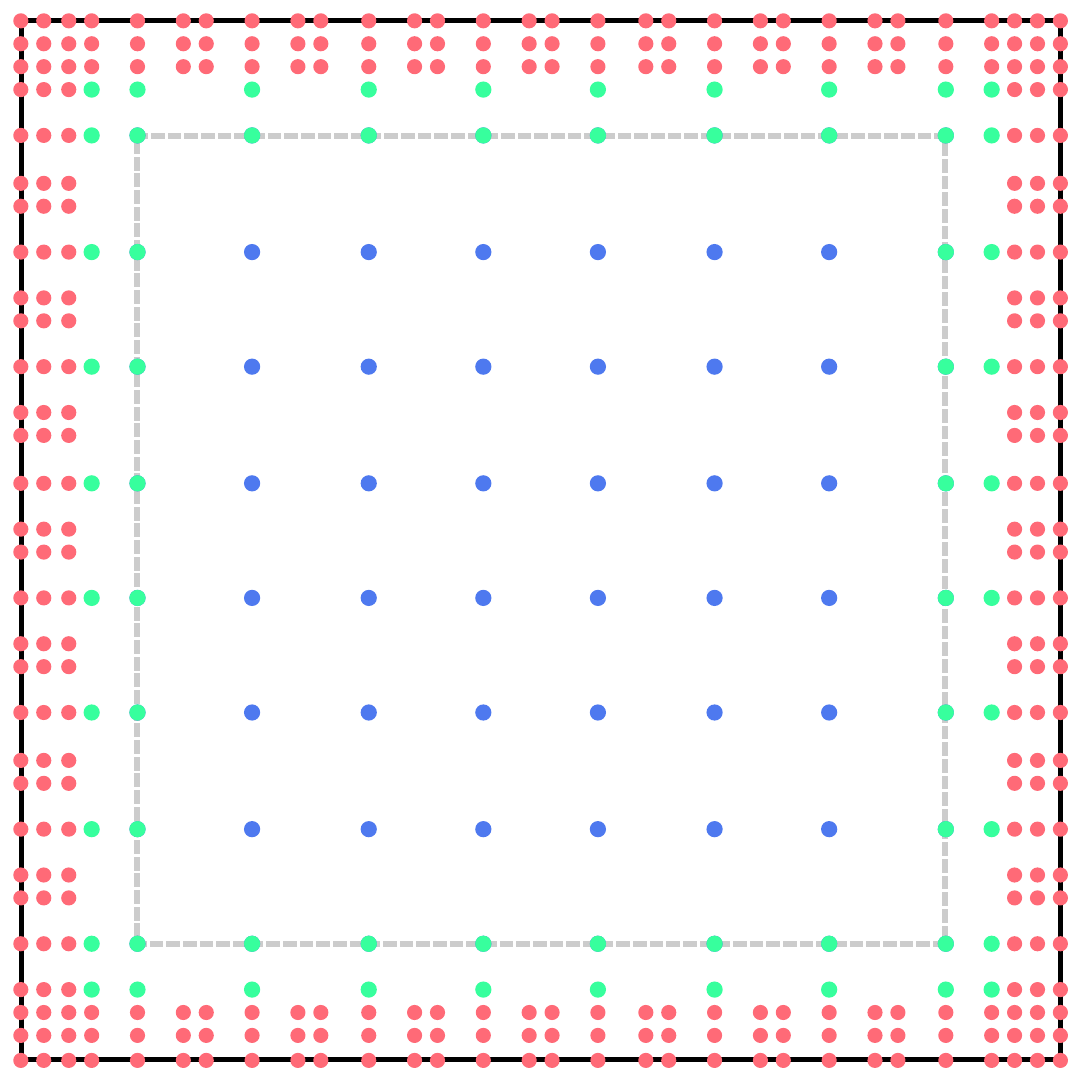}
    \hskip3em
    \includegraphics[scale=0.33]{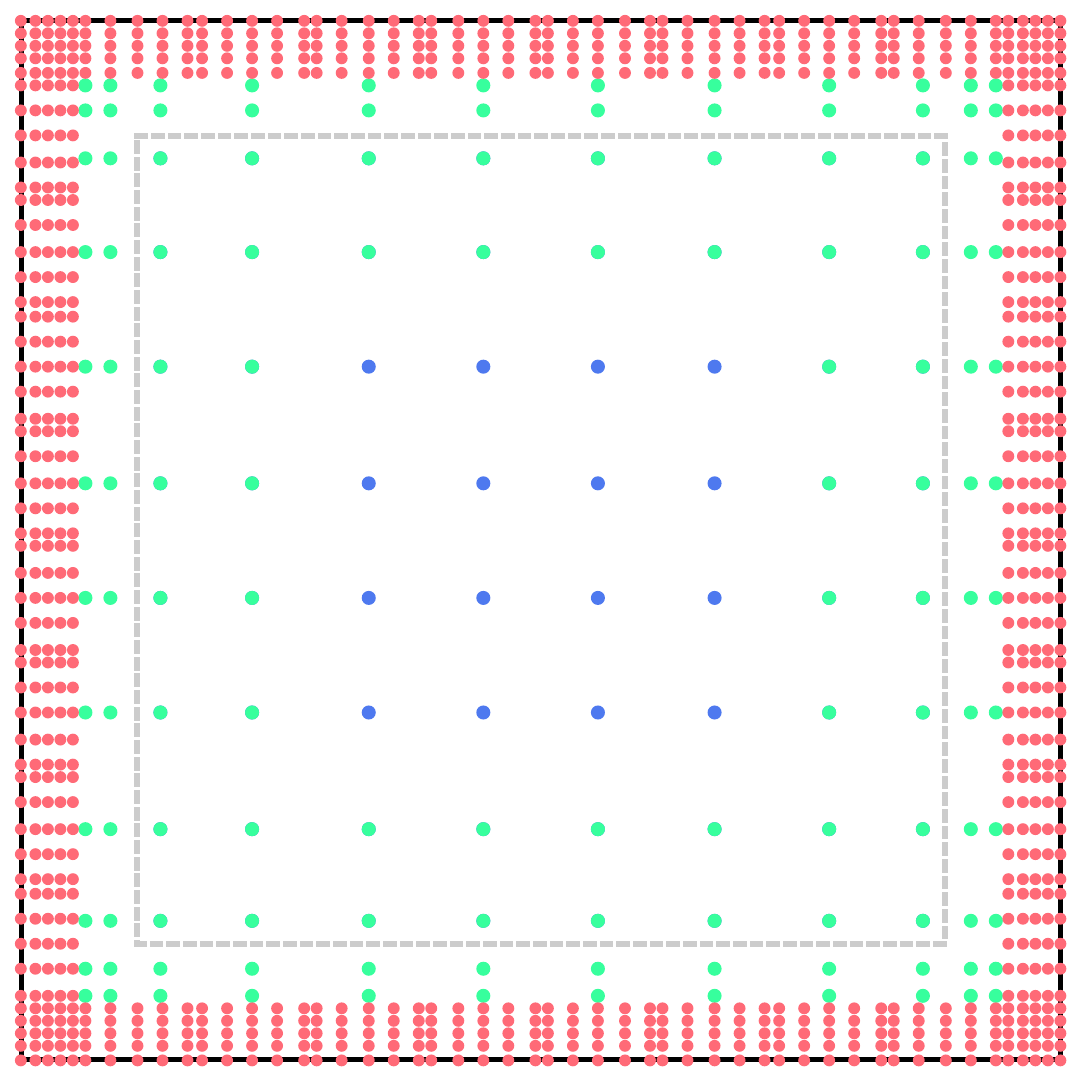}
    \caption{The mixed degree Greville points for the spaces $\mathcal{S}_{1/9}^{(\ab{3},\ab{5}),\ab{2}}$ (left) and $\mathcal{S}_{1/9}^{(\ab{5},\ab{9}),\ab{4}}$ (right). The blue, green and red collocation points correspond to the basis functions from the spaces $\mathcal{S}_1([0,1]^2)$, $\overline{\mathcal{S}}_1([0,1]^2)$ and $\mathcal{S}_2([0,1]^2)$, respectively. 
    The dashed gray lines denote the boundary of the subdomain $[h,1-h]^2$. 
    }
    \label{fig:Greville59}
\end{figure}

\subsection{The mixed degree superconvergent points}
\label{subsec:superconvergent}
The second choice of collocation points will be called mixed degree superconvergent points. Superconvergent points are the roots of the corresponding Galerkin residual $D^\sm(u-u_h)$. Estimates of these points can be computed by solving the \sm-th order ordinary differential equation  
\begin{equation*} \label{eq:biharmonic_equation}
u^{(\sm)}(x) =f(x), \quad x \in (0,1), \quad {\rm and} \quad \left(\frac{d^\ell}{dx^\ell}u\right)(0)=\left(\frac{d^\ell}{dx^\ell}u\right)(1)=0, \quad \ell=0,1,\ldots,\frac{\sm}{2}-1,
\end{equation*}
with some particular function~$f$ as demonstrated in~\cite{GomezLorenzisVariationalCollocation}. Superconvergent points have been mainly studied for the one-patch case and for splines with maximal regularity, i.e.~$\sm=p-1$, e.g.~in \cite{IsoCollocMethods2010, SuperConvergent2015,GomezLorenzisVariationalCollocation,MonSanTam2017} for second order PDEs, and e.g. in~\cite{GomezLorenzisVariationalCollocation,RealiGomez2015,Maurin2018} for fourth order problems. Since the use of splines with maximal regularity is not possible for the multi-patch case, superconvergent points for splines with a lower regularity than $\sm=p-1$ have been investigated, namely for the case of second order problems in~\cite{KaVi20} and for the case of fourth order problems in~\cite{KaKoVi24}. Below, we will focus on the description of the selection of the mixed degree superconvergent points for the mixed degree underlying spline spaces $\mathcal{S}^{(\ab{3},\ab{5}),\ab{2}}_h([0,1]^2)$ and $\mathcal{S}^{(\ab{5},\ab{9}),\ab{4}}_h([0,1]^2)$, which will allow to solve the Poisson's and the biharmonic equation, respectively, over multi-patch domains by means of isogeometric collocation.

Let us first consider the case $\mathcal{S}^{(\ab{3},\ab{5}),\ab{2}}_h([0,1]^2)$. The superconvergent points for the spaces $\mathcal{S}^{3,2}_h([0,1])$ and $\mathcal{S}^{5,2}_h([0,1])$ on each knot span with respect to the reference interval~$[-1,1]$ are given as
\begin{equation} \label{eq:superconvergent}
\left\{ \pm \frac{1}{\sqrt{3}} \right\} \quad {\rm and} \quad
\left\{ \pm \sqrt{\frac{1}{{15}} (6-\sqrt{21})} ,\, \pm \sqrt{\frac{1}{{15}} (6+\sqrt{21})} \right\},
\end{equation}
respectively, cf.~\cite{KaVi20}. In order to get the appropriate cardinality for the set of the mixed degree superconvergent points, we will take clustered subsets of the superconvergent points~\eqref{eq:superconvergent}. The goal is to replace the mixed degree Greville points for the space $\mathcal{S}_{h}^{(\ab{3},\ab{5}),\ab{2}}([0,1]^2)$, which already have the correct number, with the same number of superconvergent points. More precisely, we will replace the $k^2$ Greville points $(\loc_{j_1}^{3,2},\loc_{j_2}^{3,2})$ which are contained in the subdomain $[h,1-h]^2$, and the remaining Greville points 
which belong to the subdomain $[0,1]^2 \backslash [h,1-h]^2$ with the same number of superconvergent points each. In the following, we will describe this procedure for the case where all edges of $[0,1]^2$ correspond to inner edges of the multi-patch domain~$\overline{\Omega}$. The modifications to all other cases 
are straightforward and follow the concept visualized in Fig.~\ref{fig:SpacesMixed}.

First, we have to select $k$ among $2(k-1)$ superconvergent points on the interval $[h,1-h]$ for the space $\mathcal{S}_h^{3,2}([0,1])$. For that we skip the redundant $k-2$ points in a clustered way, cf.~Fig.~\ref{fig:SuperconvergentMixed}. To select a proper subset of superconvergent points on the subdomain $[0,1]^2 \backslash [h,1-h]^2$, we first have to construct the clustered set of superconvergent points for the space $\mathcal{S}_h^{5,2}([0,1])$. Since the boundary points of $[0,1]$ are not contained in the set of all superconvergent points, they have to be added to the set to be able to impose boundary conditions. Therefore, we have to omit $4(k+1)-(3k+6)+2=k$ points in order to get a clustered subset with the proper cardinality. This can be done as explained in \cite[Fig.~3]{KaVi20}. This leads again to $\sm+1+\frac{\sm}{2}=4$ clustered superconvergent points each on the intervals $[0,h]$ and $[1-h,1]$, as in the case of the mixed degree Greville points. Finally, we construct the tensor product of these points and select the ones which are related to the basis functions from the space $\mathcal{S}_2([0,1]^2)$, and the ones that correspond to the basis functions from the space $\overline{\mathcal{S}}_1([0,1]^2)$, see Fig.~\ref{fig:SuperconvergentMixed} (left). 

\begin{figure}[h!]
        \centering
    \includegraphics[scale=0.34]{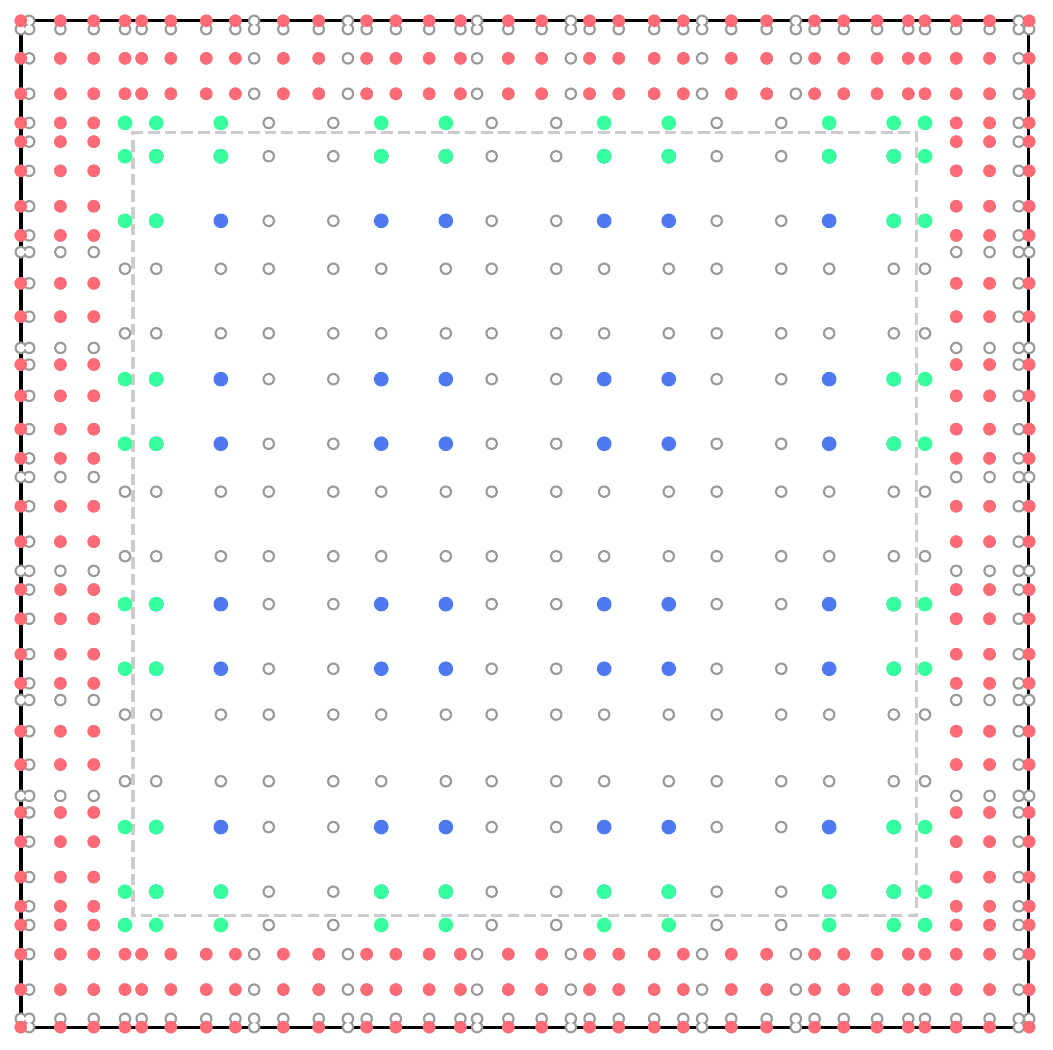}
    \hskip3em
    \includegraphics[scale=0.34]{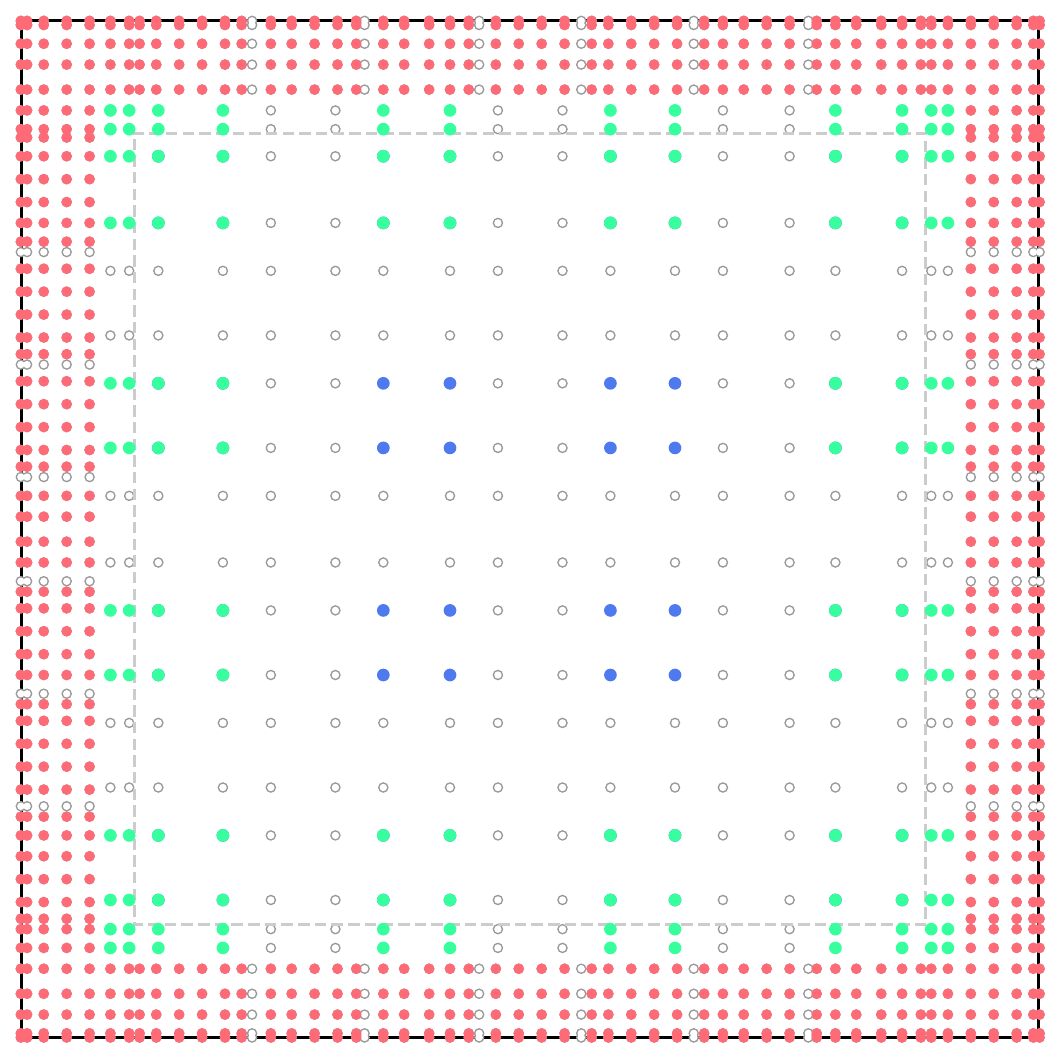}
    \caption{All mixed degree superconvergent points for the spaces $\mathcal{S}_{1/9}^{(\ab{3},\ab{5}),\ab{2}}$ (left) and $\mathcal{S}_{1/9}^{(\ab{5},\ab{9}),\ab{4}}$ (right). The blue, green and red points are the clustered mixed degree superconvergent points that correspond to the basis functions from the spaces $\mathcal{S}_1([0,1]^2)$, $\overline{\mathcal{S}}_1([0,1]^2)$ and $\mathcal{S}_2([0,1]^2)$, respectively. The dashed gray lines denote the boundary of the subdomain $[h,1-h]^2$.  
    }
    \label{fig:SuperconvergentMixed}
\end{figure}

Let us consider now the case $\mathcal{S}^{(\ab{5},\ab{9}),\ab{4}}_h([0,1]^2)$. For this case, the superconvergent points for the spaces $\mathcal{S}^{5,4}_h([0,1])$ and $\mathcal{S}^{9,4}_h([0,1])$ on each knot span with respect to the reference interval~$[-1,1]$ are given as 
$$
\left\{ \pm \frac{1}{\sqrt{3}} \right\} \quad {\rm and} \quad
\left\{ \pm 0.2072795685478027, \pm 0.5963052503103114, \pm 0.9098737952346008  \right\},
$$
respectively, cf.~\cite{KaKoVi24}, where the latter ones are the roots of the sextic polynomial $4823 x^6 - 5915 x^4 + 1665 x^2 -61$. Therefore, the superconvergent points on the subdomain $[h,1-h]^2$ are exactly the same as in the case $\sm=2$ above. To select a proper subset of superconvergent points for the space $\mathcal{S}_h^{\ab{9},\ab{4}}([0,1]^2)$ on the subdomain $[0,1]^2 \backslash [h,1-h]^2$, we again first construct the clustered set of superconvergent points for the space $\mathcal{S}_h^{9,4}([0,1])$ and add boundary points of $[0,1]$ as presented in \cite[Fig.~3]{KaKoVi24}.
In this way, we again have $\sm+1+\frac{\sm}{2}=7$ superconvergent points each on the intervals $[0,h]$ and $[1-h,1]$ as in the case of mixed degree Greville points. Finally, we construct the tensor product of these points and select the ones which correspond to the basis functions from the spaces $\mathcal{S}_2([0,1]^2)$ and $\overline{\mathcal{S}}_1([0,1]^2)$, see Fig.~\ref{fig:SuperconvergentMixed} (right). 
Similarly as above, this covers the case where all edges of $[0,1]^2$ correspond to inner edges of the multi-patch domain~$\overline{\Omega}$. For all other cases, we follow the concept from Fig.~\ref{fig:SpacesMixed}.


\section{Numerical examples}  \label{section_Numerical_examples}

On the basis of several examples, we will demonstrate the potential of our isogeometric collocation method for solving the Poisson's and the biharmonic equation over planar multi-patch domains, and will further study the convergence under $h$-refinement for both types of considered collocation points, namely for the mixed degree Greville points and for the mixed degree superconvergent points. 


In all examples 
(Examples~\ref{ex:collocationPoison}--\ref{ex:Lshape_squareSystem}) the right side functions $g, g_1$ for the Poisson's equation \eqref{eq:Poisson} and the right side functions $g, g_1, g_2$ for the biharmonic equation \eqref{eq:biharmonic} will be obtained from the exact solution  
\begin{equation}  \label{eq:exactSolution}
 u(x_1,x_2)= \cos\left(x_1\right) \sin\left(x_2\right). 
\end{equation}
We will use the $C^2$ and $C^4$-smooth isogeometric spline spaces~$\W^2$ and $\W^4$ with the mesh sizes $h=1/2^i$, $i=3,\ldots,6$, denoted by $\W_h^2$ and $\W_h^4$, as discretization spaces for computing a $C^2$-smooth and $C^4$-smooth approximation of the solution of the Poisson's equation~\eqref{eq:Poisson} and of the biharmonic equation~\eqref{eq:biharmonic}, respectively. The quality of the obtained approximants $u_h \in \mathcal{W}^\sm_h$ with respect to the exact solution \eqref{eq:exactSolution} will be investigated by calculating the relative errors with respect to the $L^2$-norm and $H^1$-seminorm as well as 
with respect to equivalents (cf.~\cite{BaDe15})
of the $H^m$-seminorms, $2 \leq m \leq \sm$, $\sm=2,4$, i.e.
\begin{equation} \label{eq:eqiuv2seminorms}
\frac{\| u-u_h\|_{L^2}}{\| u \|_{L^2}}, \quad
\frac{| u-u_h|_{H^1}}{| u |_{H^1}}, \quad
 \frac{\| \Delta u- \Delta u_h\|_{L^2}}{\| \Delta u \|_{L^2}}, \quad
\frac{ \| \nabla (\Delta u) - \nabla(\Delta u_h)\|_{L^2}}{\| \nabla(\Delta u )\|_{L^2}}, \quad 
\frac{\| \Delta^2 u- \Delta^2 u_h\|_{L^2}}{\| \Delta^2 u \|_{L^2}}.
\end{equation}

In Examples~\ref{ex:collocationPoison} and \ref{ex:collocationBiharmonic}, we will first consider three bilinearly parameterized (multi-patch) domains. The first one will be a bilinear one-patch domain (Domain A), cf. Fig.~\ref{fig:bilinearDomains} (left), with the vertices 
\begin{equation} \label{eq:DomainA}
\ab{\Xi}_{0} =\begin{pmatrix} 3/4 \\ 3/4 \end{pmatrix}, \; \ab{\Xi}_{1} = \begin{pmatrix} 15/4 \\ 9/4 \end{pmatrix}, \; \ab{\Xi}_{2} =\begin{pmatrix} 21/20 \\ 27/10 \end{pmatrix}, \;
\ab{\Xi}_{3} = \begin{pmatrix} 3 \\ 3 \end{pmatrix}.
\end{equation}
The second one will be a bilinear three-patch domain (Domain B) having the seven vertices
\begin{equation*} \label{eq:verticesThreePatch}
\begin{split}
\ab{\Xi}_0  & = \begin{pmatrix} 12 \\ 8 \end{pmatrix}, \; \ab{\Xi}_1 = \begin{pmatrix} 14 \\ 44/5  \end{pmatrix},  \;\ab{\Xi}_2 = \begin{pmatrix} 61/5 \\ 109/10  \end{pmatrix},  \;\ab{\Xi}_3 = \begin{pmatrix} 10 \\ 91/10   \end{pmatrix},  
\\
\ab{\Xi}_4  & = \begin{pmatrix} 87/10  \\ 36/5  \end{pmatrix},  \;\ab{\Xi}_5 = \begin{pmatrix} 111/10 \\ 6  \end{pmatrix},\;
\ab{\Xi}_6  = \begin{pmatrix} 71/5 \\  13/2 \end{pmatrix}, \;
\end{split}
\end{equation*}
which define the parameterizations~$\ab{F}^{(i)}$ of the individual patches $\Omega^{(i)}$ as
\begin{equation}  \label{eq:three_patch_domain_param}
\ab{F}^{(i)}(\ab{\xi}) = \ab{\Xi}_0 (1-\xi_1) (1-\xi_2) + \ab{\Xi}_{2i+1} \,(1-\xi_1) \xi_2  + \ab{\Xi}_{2i+2} \,\xi_1 \xi_2 + \ab{\Xi}_{2i+3} \,\xi_1 (1-\xi_2), 
\quad i=0,1,2, 
\end{equation} 
where $\ab{\Xi}_{7} := \ab{\Xi}_{1}$, cf. Fig.~\ref{fig:bilinearDomains} (middle). 
The third bilinear multi-patch domain (Domain C) will be the five-patch \emph{Paper Plane domain}~
visualized in Fig.~\ref{fig:bilinearDomains} (right), which is determined by the eleven points 
\begin{equation*} \label{eq:verticesThreePatch}
\begin{split}
\quad \ab{\Xi}_0  & = \begin{pmatrix} 14/5 \\ 3 \end{pmatrix}, \; \ab{\Xi}_1 = \begin{pmatrix} 31/10 \\ 4 \end{pmatrix},  \;\ab{\Xi}_2 = \begin{pmatrix} 14/5 \\ 26/5 \end{pmatrix},  \;\ab{\Xi}_3 = \begin{pmatrix} 5/2 \\ 4  \end{pmatrix},  
\;
\ab{\Xi}_4   = \begin{pmatrix} 1 \\ 7/5 \end{pmatrix},  \;\ab{\Xi}_5 = \begin{pmatrix} 1 \\ 1 \end{pmatrix},\\
\ab{\Xi}_6  & = \begin{pmatrix} 17/10 \\ 9/10 \end{pmatrix}, \;
\ab{\Xi}_7   = \begin{pmatrix} 14/5 \\ 11/10 \end{pmatrix}, \;
\ab{\Xi}_8 = \begin{pmatrix} 39/10 \\ 9/10 \end{pmatrix}, \;
\ab{\Xi}_9 = \begin{pmatrix} 23/5 \\ 1 \end{pmatrix}, \;
\ab{\Xi}_{10} = \begin{pmatrix} 23/5 \\ 7/5 \end{pmatrix}, \;
\end{split}
\end{equation*}
and by the parameterizations $\ab{F}^{(i)}$ of the individual patches $\overline{\Omega^{(i)}}$ given as
\begin{equation}  \label{eq:five_patch_domain_param}
\ab{F}^{(i)}(\ab{\xi}) = \ab{\Xi}_0 (1-\xi_1) (1-\xi_2) + \ab{\Xi}_{2i+1} \,(1-\xi_1) \xi_2  + \ab{\Xi}_{2i+2} \,\xi_1 \xi_2 + \ab{\Xi}_{2i+3} \,\xi_1 (1-\xi_2), 
\quad i=0,\ldots,4, 
\end{equation} 
with $\ab{\Xi}_{11} := \ab{\Xi}_{1} $.

\begin{figure}[htb!]
    \centering
     \includegraphics[scale=0.5]{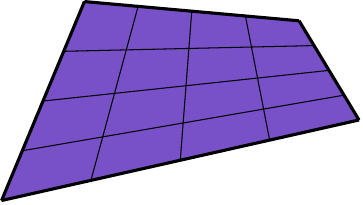}
     \hskip2.5em
    \includegraphics[scale=0.27]{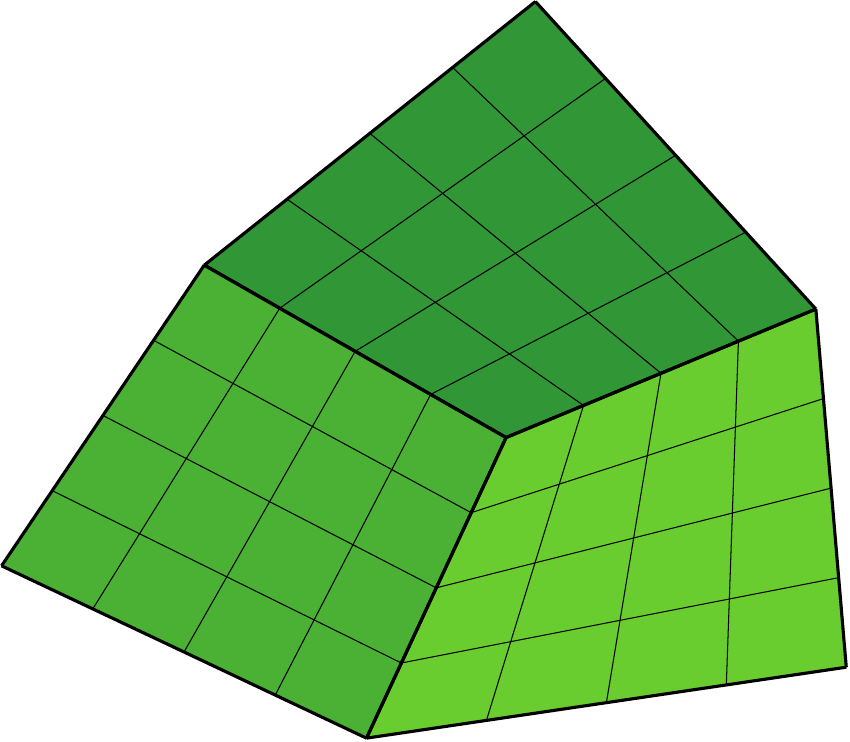}
     \hskip3.em
    \includegraphics[scale=0.37]{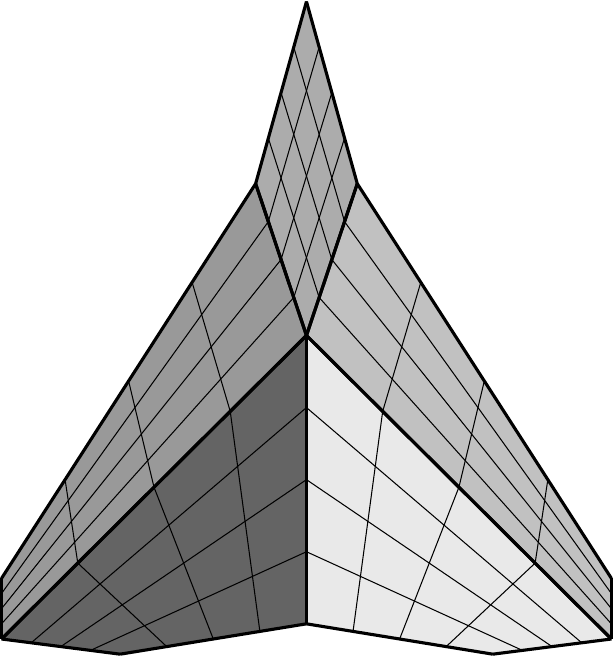}
    \\[0.3cm]
    \includegraphics[scale=0.13]{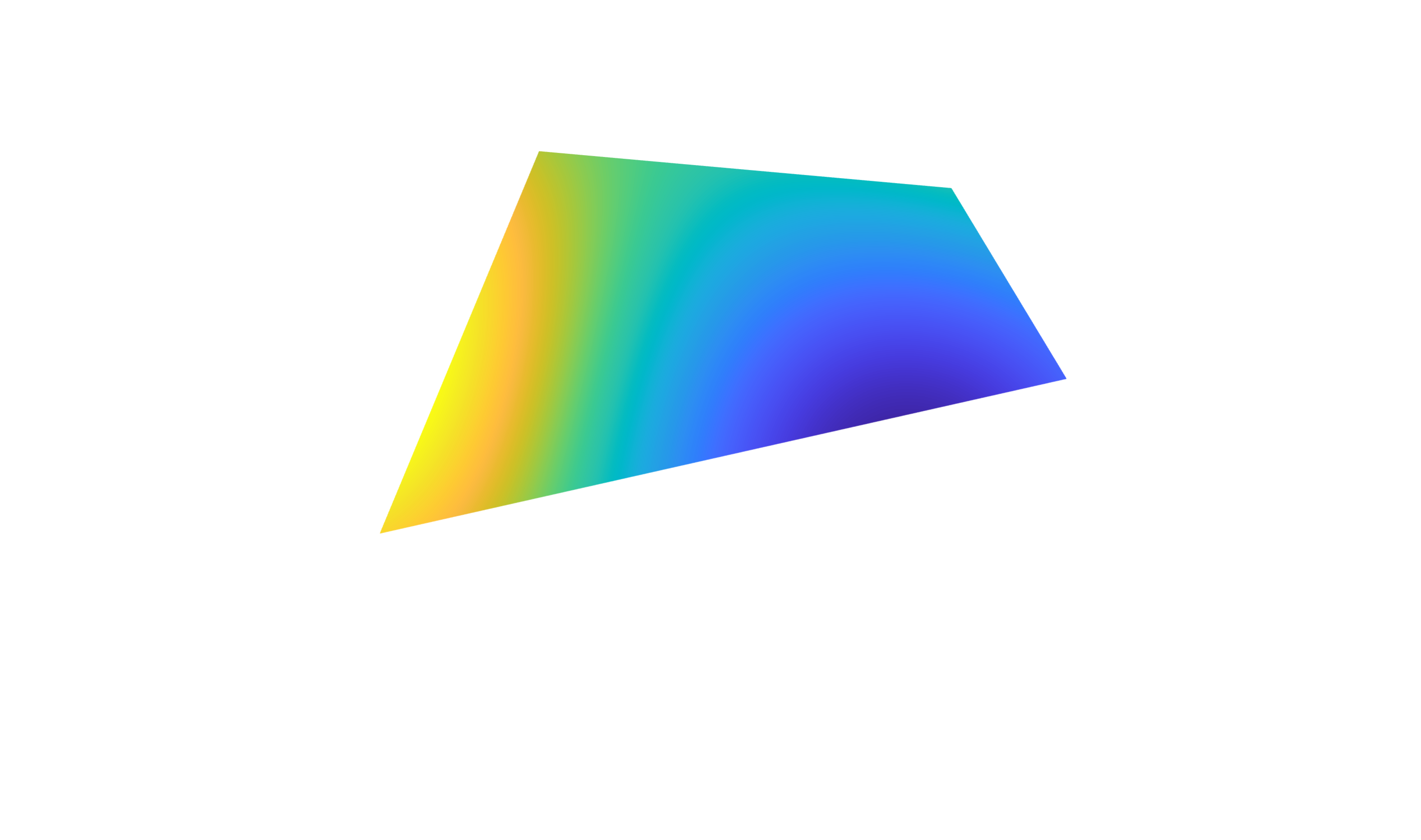}
    \hskip2.5em
    \includegraphics[scale=0.13]{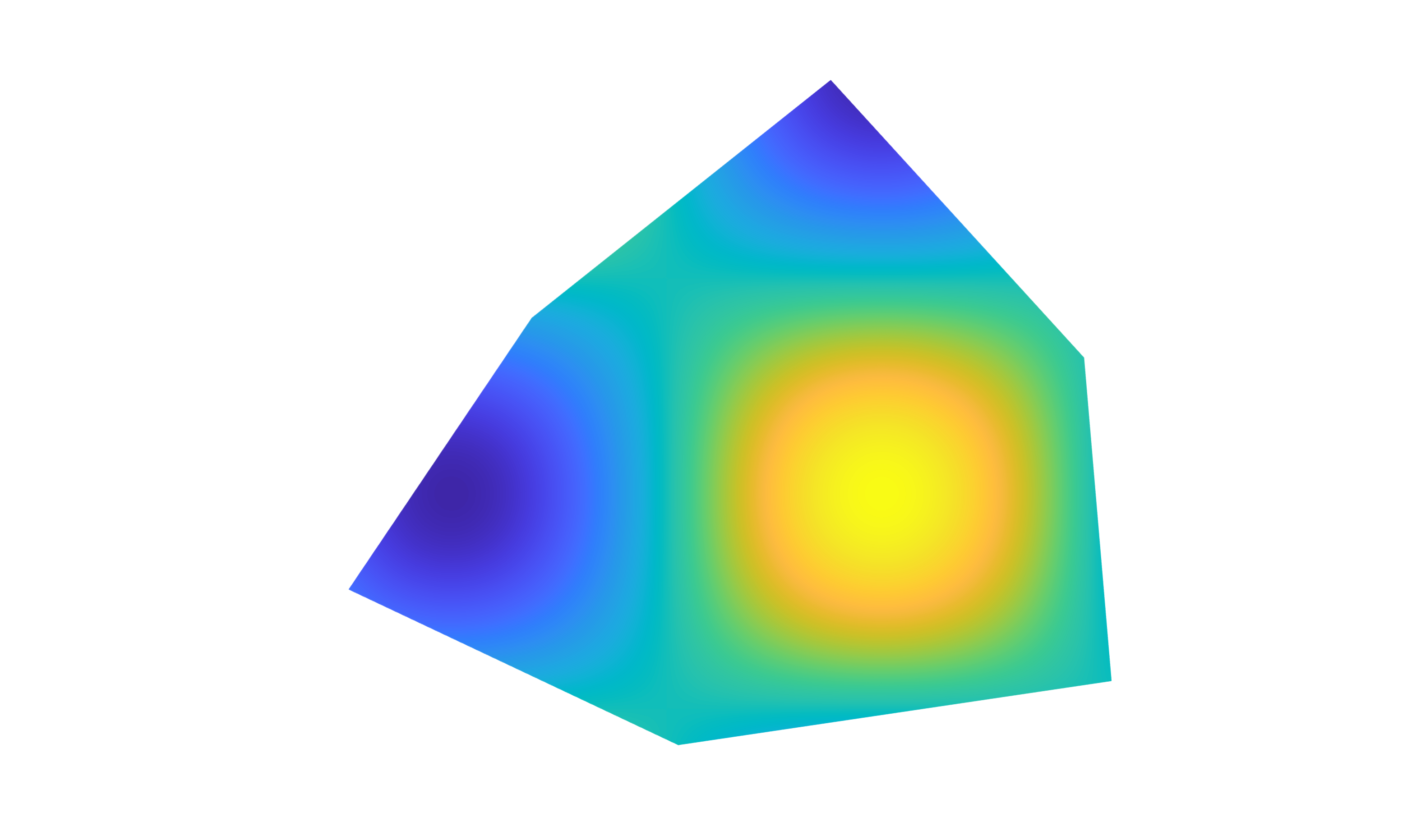}
     \hskip3.em
    \includegraphics[scale=0.19]{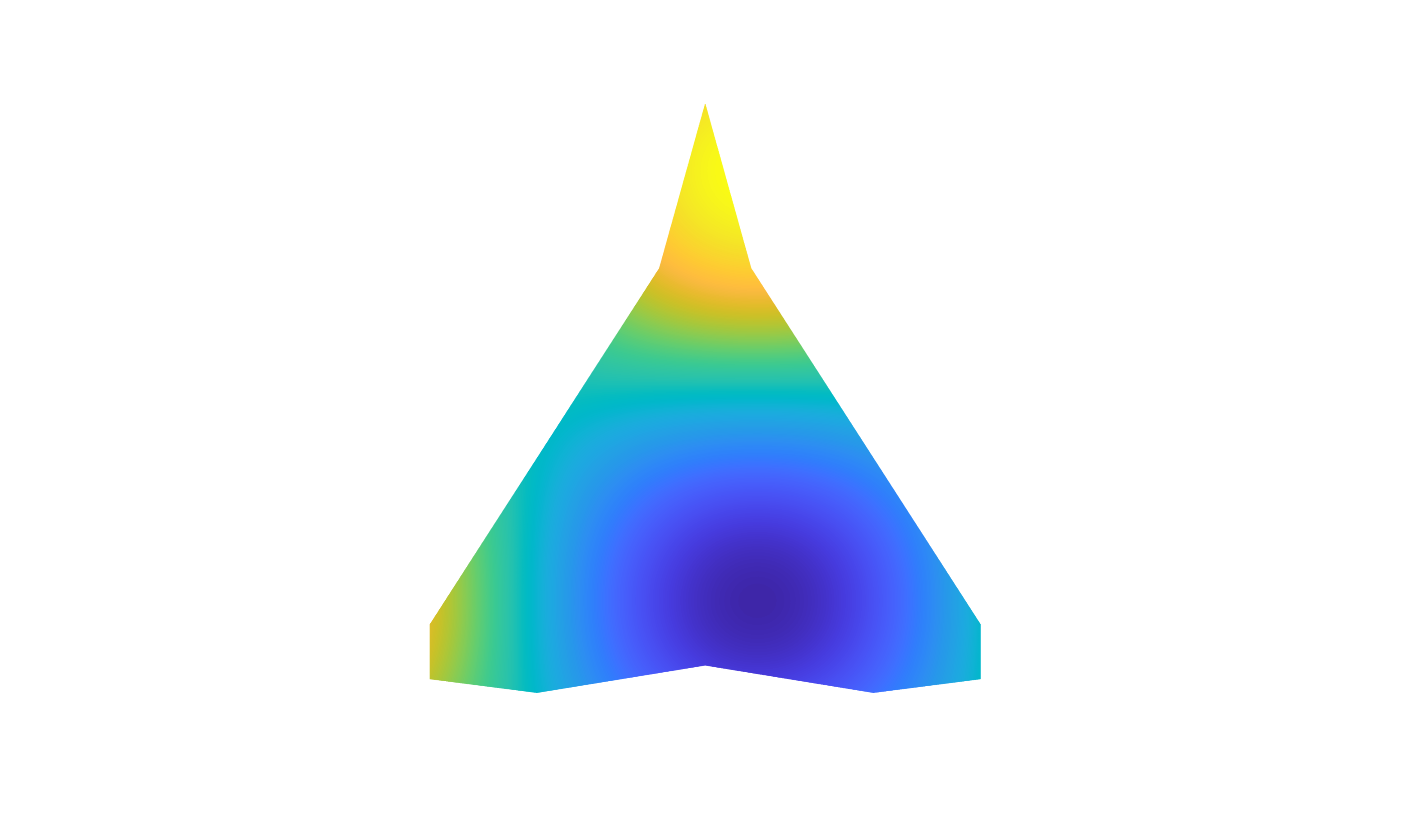} 
\caption{Top row: Plots of the bilinear one-patch Domain A, given in \eqref{eq:DomainA}, left, bilinear three-patch
Domain B, presented in \eqref{eq:three_patch_domain_param}, middle, and bilinear five-patch Domain C, given in  \eqref{eq:five_patch_domain_param}, right.  Bottom row: The three multi-patch domains together with the exact solution \eqref{eq:exactSolution}.}
    \label{fig:bilinearDomains}
\end{figure}

In addition, we will solve in Examples~\ref{ex:collocationPoison} and \ref{ex:collocationBiharmonic} the Poisson's and the biharmonic equation, respectively, over so-called bilinear-like~$G^s$ multi-patch geometries \cite{KaVi17c, KaVi20b}. This class of multi-patch parameterizations possesses for each inner edge~$\Gamma^{(i)}$, $i \in \mathcal{I}_{\Gamma}^I$, with $\overline{\Gamma^{(i)}}=\overline{\Omega^{(i_0)}} \cap \overline{\Omega^{(i_1)}}$, $i_0,i_1 \in \mathcal{I}_{\Omega}$, like the bilinear multi-patch domains, linear gluing functions $\alpha^{(i,\Side)},\beta^{(i,\Side)}, \Side \in \{\LL,\RR \}$, but allows in contrast to bilinear multi-patch domains the modeling of multi-patch domains with curved domain boundaries, too, see e.g.~\cite{KaSaTa19a, KaVi20b}. More precisely, a multi-patch parameterization 
is called \emph{bilinear-like $G^{s}$} if for any two neighboring 
patches~$\Omega^{(i_0)}$ and $\Omega^{(i_1)}$, $i_0,i_1 \in \mathcal{I}_{\Omega}$, with the inner edge~$\overline{\Gamma^{(i)}} = \overline{\Omega^{(i_0)}} \cap 
\overline{\Omega^{(i_1)}}$, $i \in \mathcal{I}_{\Gamma}^I$, 
and corresponding geometry mappings $\ab{F}^{(i_0)}$ and $\ab{F}^{(i_1)}$, 
parameterized as in Fig.~\ref{fig:multipatchCase} (left),
there exist linear functions 
$\alpha^{(i,\Side)},\beta^{(i,\Side)}, \Side \in \{\LL,\RR \}$, such that
 \begin{equation*}   \label{eq:FC}
 \ab{F}_\ell^{(i,\LL)}(\xi) = \ab{F}_\ell^{(i,\RR)}(\xi) =: \ab{F}_\ell^{(i)}(\xi), \quad \ell =0,1,\ldots,\sm,
 \end{equation*}
 with
 \begin{equation*}   \label{eq:FC2}
 \ab{F}_\ell^{(i,\Side)}(\xi) = \left(\alpha^{(i,\Side)}(\xi)\right)^{-\ell}\, \partial_1^\ell \ab{F}^{(\Side)}(0,\xi) - \sum_{j=0}^{\ell-1} {\ell \choose j} 
 \left(\frac{\beta^{(i,\Side)}(\xi)}{\alpha^{(i,\Side)}(\xi)}\right)^{\ell-j}  \dd^{\ell-j} \ab{F}_j^{(i,\Side)}(\xi),
 \;\; \Side\in \{\LL,\RR\}.
 \end{equation*} 
In detail, the first considered domain beyond bilinear (multi-patch) domains will be the biquadratic polynomial one-patch domain (Domain D), cf. Fig.~\ref{fig:bilinearLikeDomains}~(left), with the parameterization
\begin{equation} \label{eq:DomainD}
\ab{F}(\ab{\xi}) = \sum_{j_1,j_2=0}^{2} \ab{c}_{j_1,j_2} N^{\ab{2},\ab{2}}_{j_1,j_2} (\ab{\xi}),
\end{equation}
where
\begin{equation*}
\begin{split}
\ab{c}_{0,0}=\begin{pmatrix} 18/5 \\ 36/5 \end{pmatrix}&, \; \ab{c}_{0,1}= \begin{pmatrix} 6 \\ 39/5 \end{pmatrix}, \; \ab{c}_{0,2}=\begin{pmatrix} 42/5 \\ 36/5 \end{pmatrix}, \\ 
\ab{c}_{1,0}=\begin{pmatrix} 21/5 \\ 6  \end{pmatrix}&, \; \ab{c}_{1,1}=\begin{pmatrix} 6 \\ 6 \end{pmatrix}, \; \ab{c}_{1,2}=\begin{pmatrix} 39/5 \\ 6 \end{pmatrix}, \\ 
\ab{c}_{2,0}=\begin{pmatrix} 18/5 \\ 24/5 \end{pmatrix}&, \; \ab{c}_{2,1}=\begin{pmatrix} 6 \\ 21/5 \end{pmatrix}, \; \ab{c}_{2,2}=\begin{pmatrix} 42/5 \\ 24/5 \end{pmatrix}.
\end{split}
\end{equation*}
This domain is trivially $G^s$-smooth since it does not possess an inner edge.
\begin{figure}[htb!]
    \centering
    \includegraphics[scale=0.25]{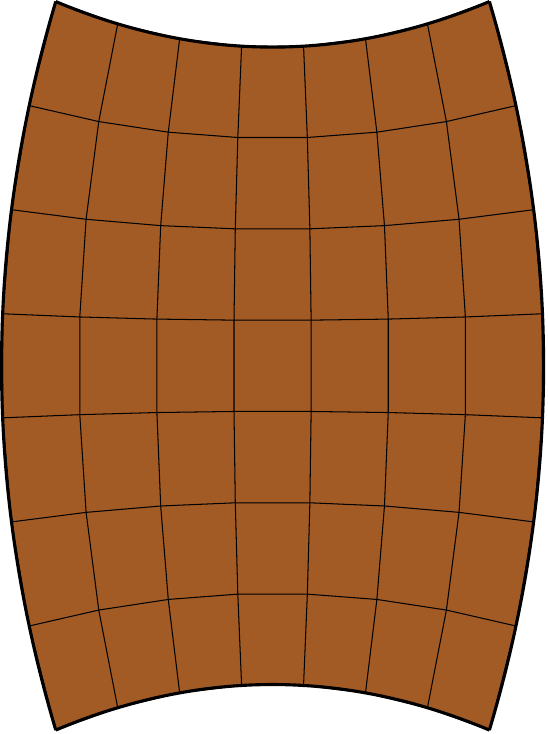}
     \hskip3.em
    \includegraphics[scale=0.32]{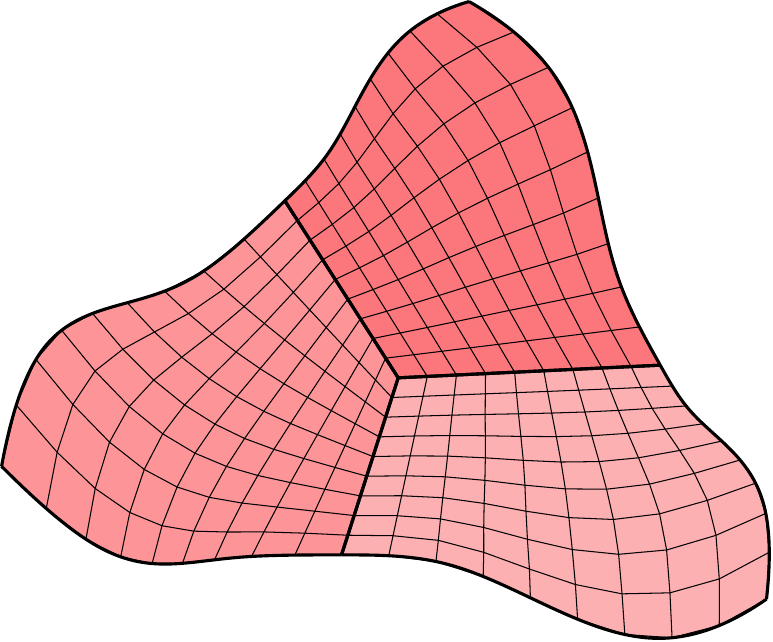}
     \hskip2.em
    \includegraphics[scale=0.32]{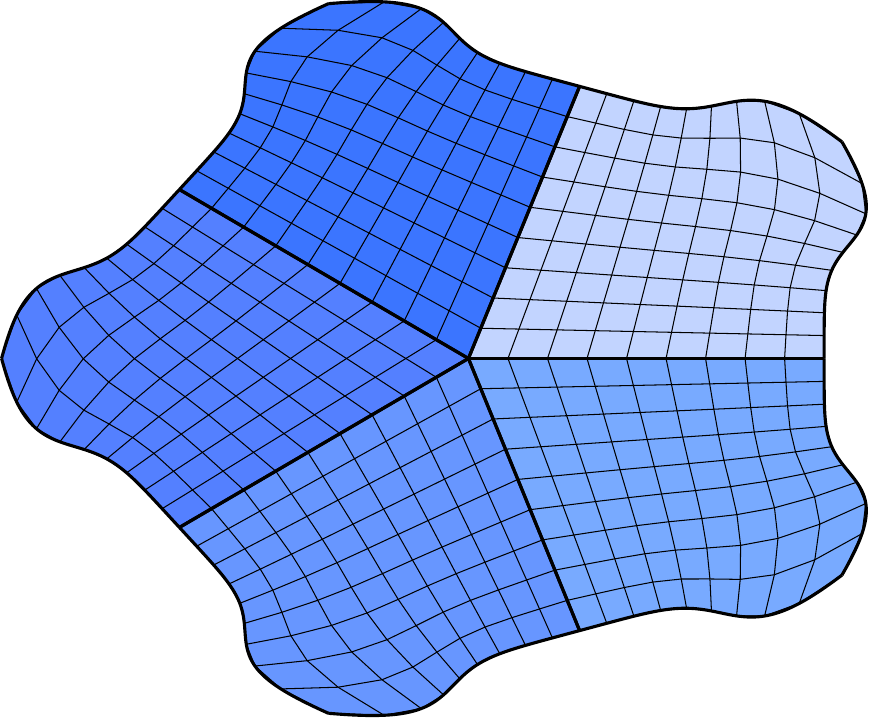}
    \\[0.6cm]
    \hskip-0.5em
    \includegraphics[scale=0.125]{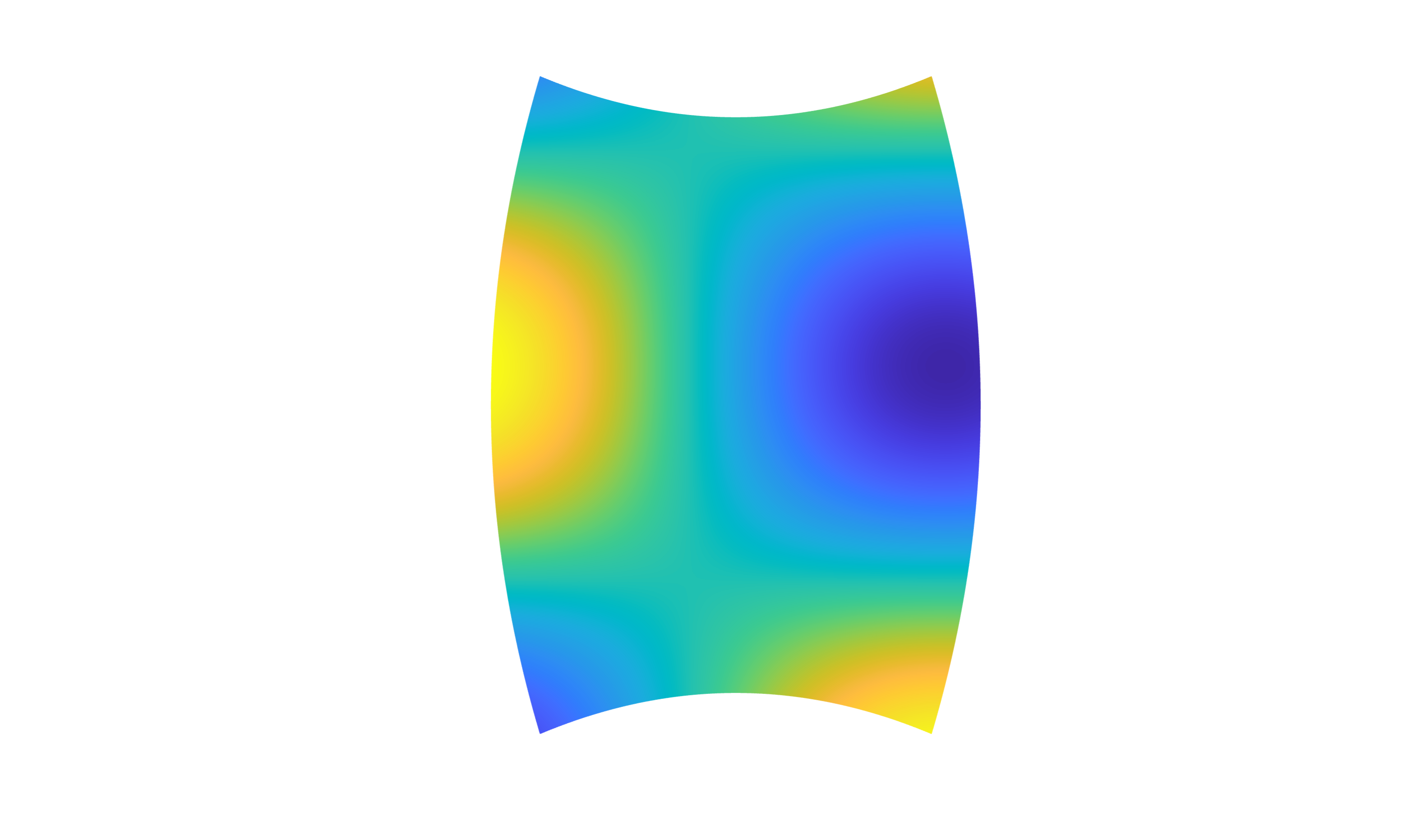}
    \hskip3.em
    \includegraphics[scale=0.16]{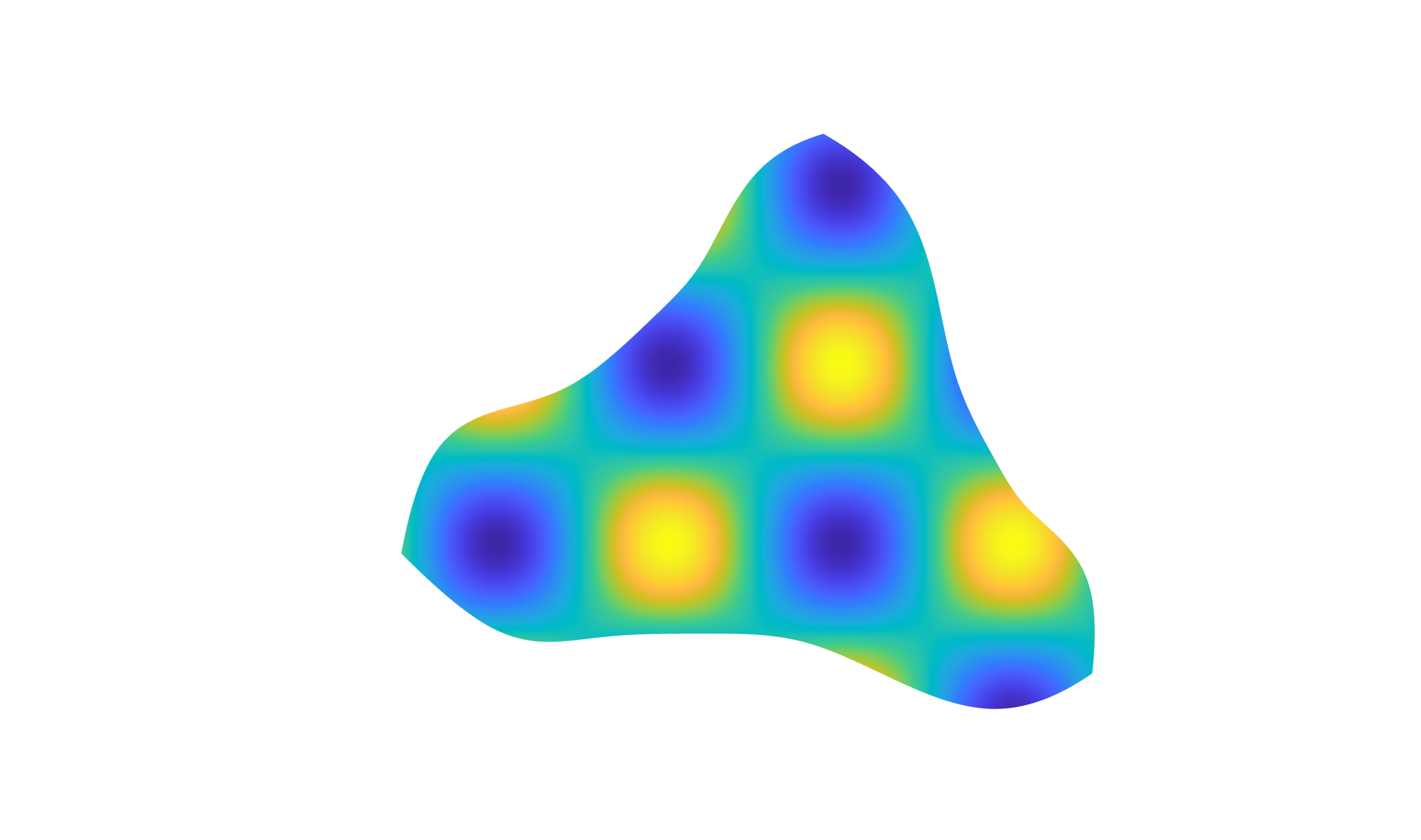}
     \hskip2.em
    \includegraphics[scale=0.15]{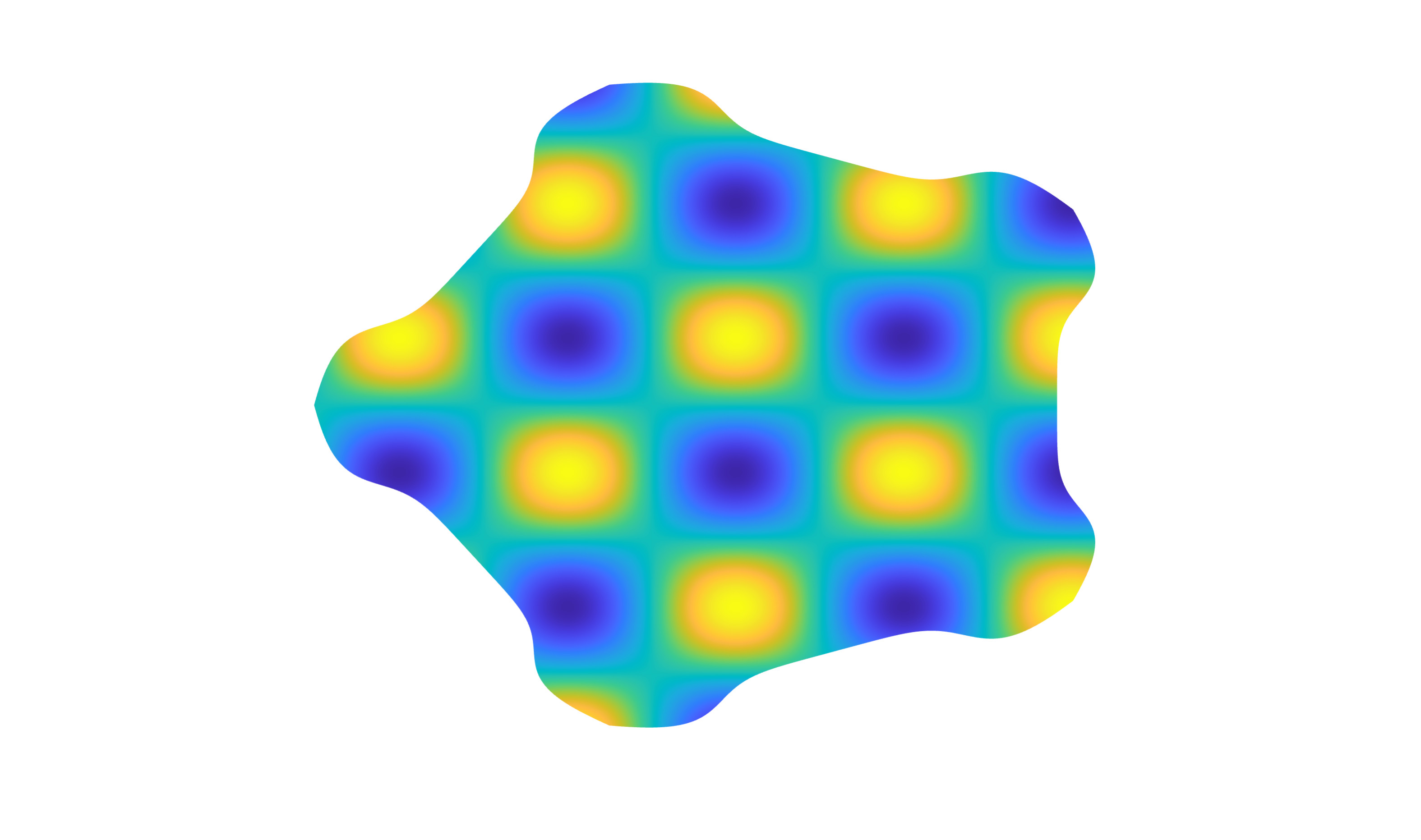}
\caption{Top row: Plots of the biquadratic one-patch Domain D, given in \eqref{eq:DomainD}, left, the bilinear-like $G^2$ three-patch Domain E, specified in \eqref{eq:three_patch_domain_bilinearLike}, middle, and the bilinear-like $G^4$ five-patch Domain F, given in \eqref{eq:five_patch_domain_bilinearLike}, right. Bottom row: The three multi-patch domains together with the exact solution \eqref{eq:exactSolution}.}
    \label{fig:bilinearLikeDomains}
\end{figure}
The second domain will be the bilinear-like $G^2$ three-patch domain (Domain E) presented in Fig.~\ref{fig:bilinearLikeDomains} (middle), where the individual patches are given by bicubic geometry mappings  
from the space $\mathcal{S}_{1/4}^{\ab{3}, \ab{2}}([0,1]^2) \times \mathcal{S}_{1/4}^{\ab{3}, \ab{2}}([0,1]^2)$, parameterized as 
\begin{equation}  \label{eq:three_patch_domain_bilinearLike}
 \ab{F}^{(i)} (\ab{\xi}) = \sum_{j_1,j_2=0}^{6} \ab{c}^{(i)}_{j_1,j_2} N^{\ab{3},\ab{2}}_{j_1,j_2}(\ab{\xi}), \quad i=0,1,2,
\end{equation}
and with the control points~$\ab{c}^{(i)}_{j_1,j_2}$ stated in \cite[Table 1]{KaKoVi24b}.
As third domain beyond bilinear multi-patch domains, we will consider the bilinear-like $G^4$ five-patch domain (Domain F) shown in Fig.~\ref{fig:bilinearLikeDomains} (right), which represents a possible \emph{Screw knob star domain}. Such domains 
do not only have an attractive design, but due to their shape a high torque can be achieved.
The individual patches are given by biquintic geometry mappings  
from the space $\mathcal{S}_{1/4}^{\ab{5}, \ab{4}}([0,1]^2) \times \mathcal{S}_{1/4}^{\ab{5}, \ab{4}}([0,1]^2)$, parameterized as 
\begin{equation}  \label{eq:five_patch_domain_bilinearLike}
 \ab{F}^{(i)} (\ab{\xi}) = \sum_{j_1,j_2=0}^{8} \ab{c}^{(i)}_{j_1,j_2} N^{\ab{5},\ab{4}}_{j_1,j_2}(\ab{\xi}), \quad i=0,\ldots,4,
\end{equation}
where the control points~$\ab{c}^{(i)}_{j_1,j_2}$ are listed in \ref{sec:AppendixPoints}.

A bilinear two-patch domain (Domain~G) will be handled in Example~\ref{ex:Lshape_squareSystem}. This domain will be given by the vertices \begin{equation} \label{eq:verticesTwoPatch}
\ab{\Xi}_0  = \begin{pmatrix} 3 \\ 3 \end{pmatrix}, \; \ab{\Xi}_1 = \begin{pmatrix}3 \\ 6 \end{pmatrix},  \; \ab{\Xi}_2 = \begin{pmatrix}1 \\ 6 \end{pmatrix},  \; \ab{\Xi}_3 = \begin{pmatrix}1 \\ 1  \end{pmatrix}, \;
\ab{\Xi}_4  = \begin{pmatrix} 7 \\ 1 \end{pmatrix},  \; \ab{\Xi}_5 = \begin{pmatrix} 7 \\ 3 \end{pmatrix},
\end{equation}
and will represent an L-shape domain, cf.~Fig.~\ref{fig:DomainG}.
\begin{figure}[htb!]
    \centering
    \includegraphics[scale=0.45]{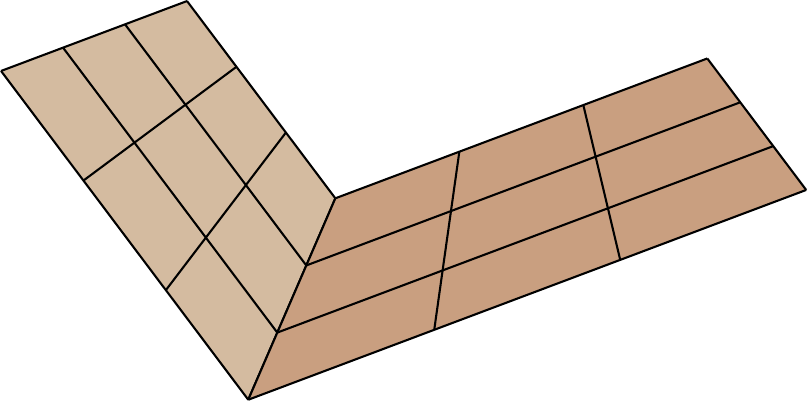}
    \hskip5em
    \includegraphics[scale=0.225]{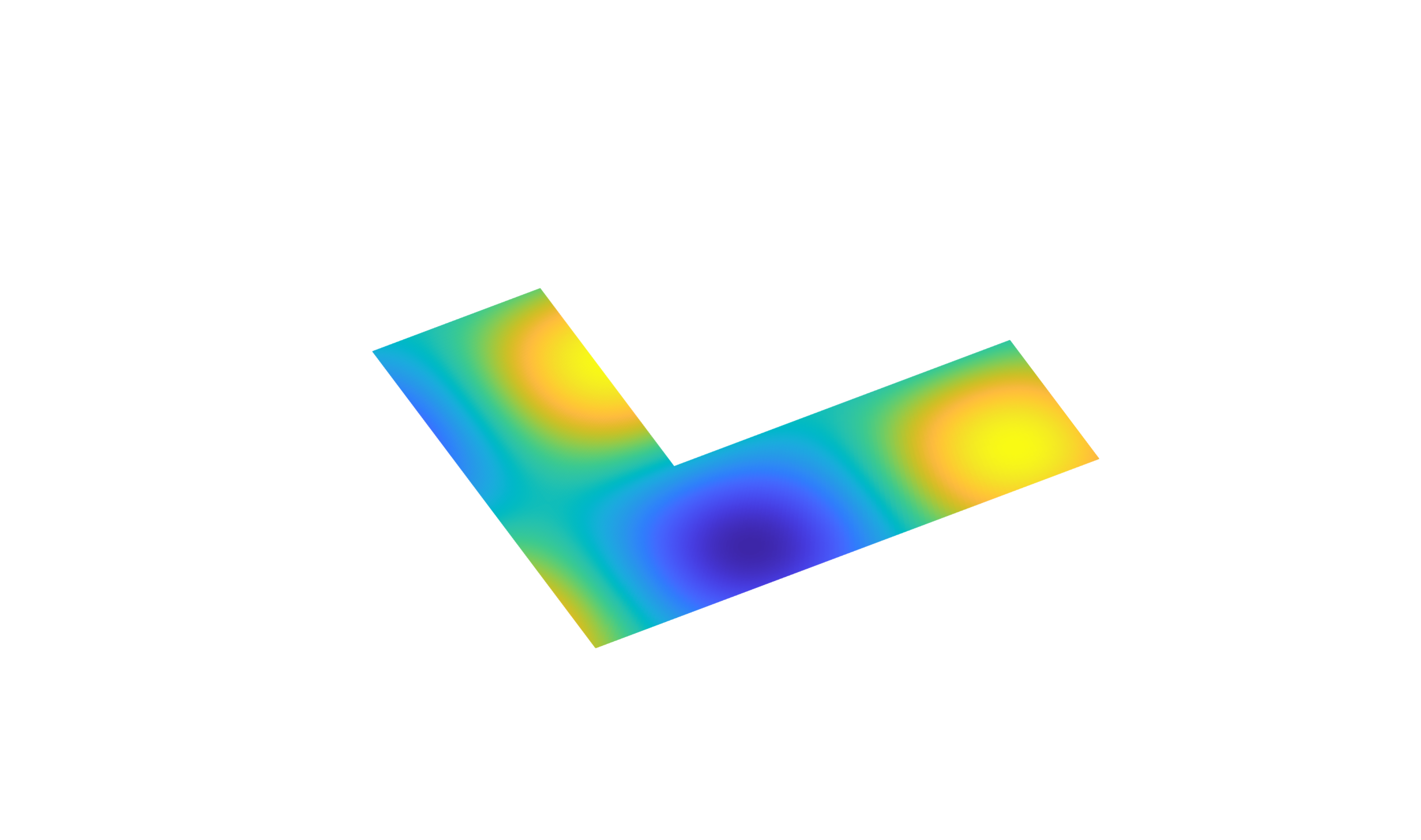}
\caption{The L-shape Domain~G, given by \eqref{eq:verticesTwoPatch}, left, and the domain together with the exact solution \eqref{eq:exactSolution}, right.
}
    \label{fig:DomainG}
\end{figure}

In Examples~\ref{ex:collocationPoison} and \ref{ex:collocationBiharmonic}, we will select the mixed degree Greville and mixed degree superconvergent points as described in Section~\ref{subsec:collocationPoints}, and will further perform the separation of the collocation points into inner and boundary collocation points as explained in Section~\ref{sec:collocation}. This leads in case of a one-patch domain (Domain~A and D) to a square linear system and in case of a multi-patch domain (Domains B--C and E--F) to a slightly overdetermined linear system. In Example~\ref{ex:Lshape_squareSystem}, we will propose a first strategy to select a suitable subset of the mixed degree superconvergent collocation points from Subsection~\ref{subsec:superconvergent} to impose a square linear system even in the multi-patch case. More precisely, we will study the case of a two-patch domain demonstrated on the basis of the L-shape Domain G. The extension to the case of multi-patch domains with extraordinary vertices is beyond the scope of the paper, and will be considered in a possible future work. 

\begin{ex} \label{ex:collocationPoison}
In this example, we solve the Poisson's equation over the Domains~A--F by using the $C^2$-smooth discretization space~$\mathcal{W}^2_h$ 
based on the underlying mixed degree spline space $\mathcal{S}_h^{(\ab{3}, \ab{5}),\ab{2}}([0,1]^2)$. For the one-patch Domains~A and D, we treat all four boundary edges as inner edges to make the usage of the underlying mixed degree spline space also for these domains necessary. The resulting relative errors~\eqref{eq:eqiuv2seminorms} with respect to the $L^2$, $H^1$ and $H^2$-(semi)norms using the mixed degree Greville and mixed degree superconvergent points are visualized in Fig.~\ref{fig:collocation_poisson} for the bilinear multi-patch Domains~A--C and in Fig.~\ref{fig:collocation_poissonBL} for the Domains~D--F.
\begin{figure}[htb!]
    \centering
    \includegraphics[scale=0.48]{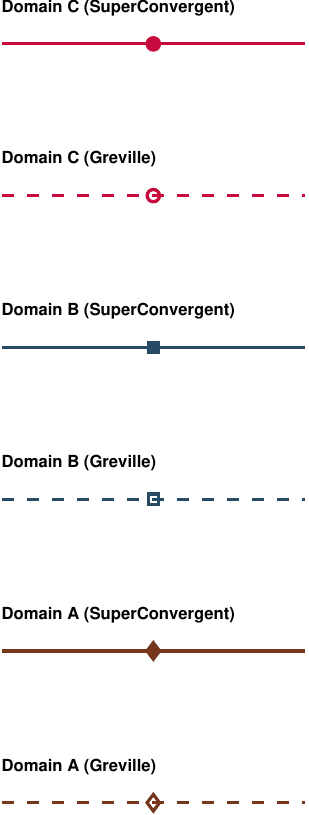}
    \hskip1.5em
    \includegraphics[scale=0.48]{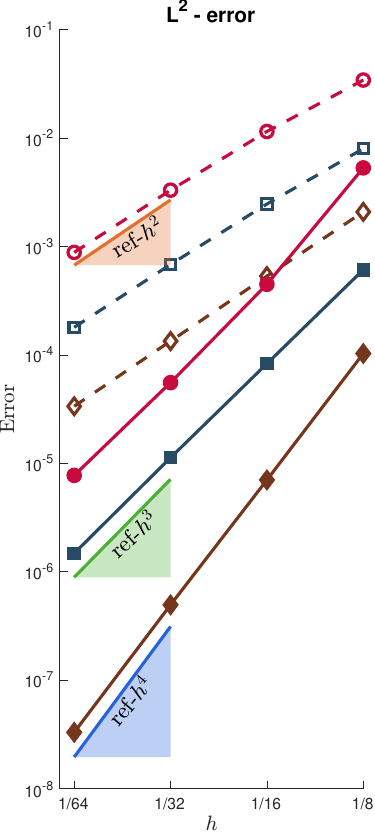}
    \hskip1.5em
    \includegraphics[scale=0.48]{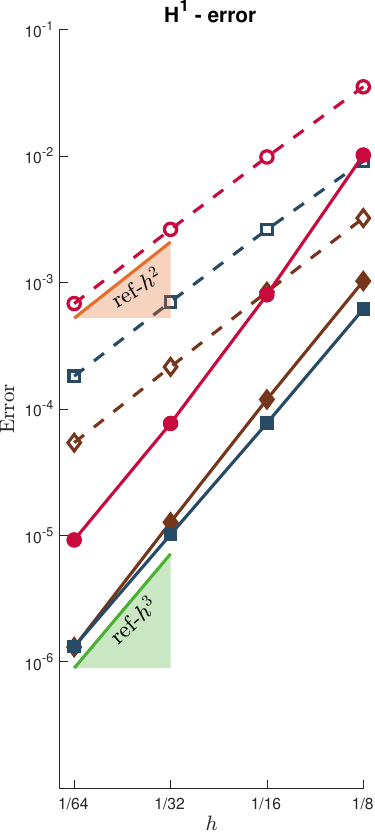}
    \hskip1.5em
    \includegraphics[scale=0.48]{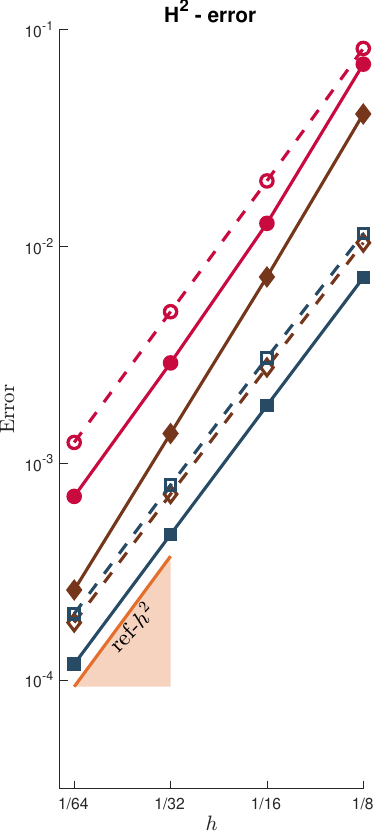}
    \caption{  Example~\ref{ex:collocationPoison}.
    Plots of the relative errors~\eqref{eq:eqiuv2seminorms} computed with respect to the $L^2$-norm and with respect to the $H^1$ and $H^2$-seminorms by solving the Poisson's equation~\eqref{eq:Poisson} over the bilinear multi-patch Domains~A, B and C using the $C^2$-smooth discretization space~$\mathcal{W}^2_h$ based on the underlying mixed degree spline space $\mathcal{S}_h^{(\ab{3}, \ab{5}),\ab{2}}([0,1]^2)$ for the corresponding mixed degree Greville and mixed degree superconvergent collocation points.}
    \label{fig:collocation_poisson}
\end{figure}
\begin{figure}[htb!]
    \centering
    \includegraphics[scale=0.48]{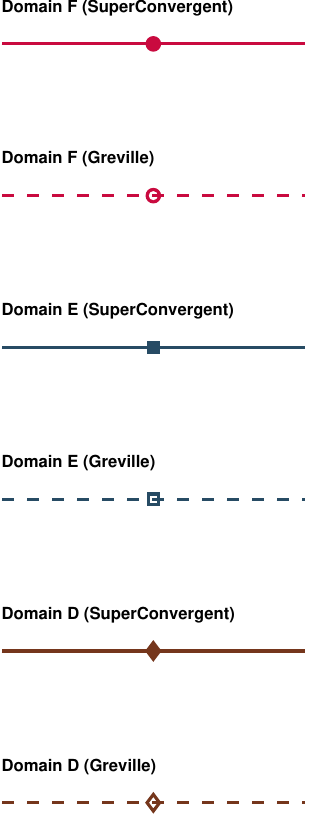}
    \hskip1.5em
    \includegraphics[scale=0.48]{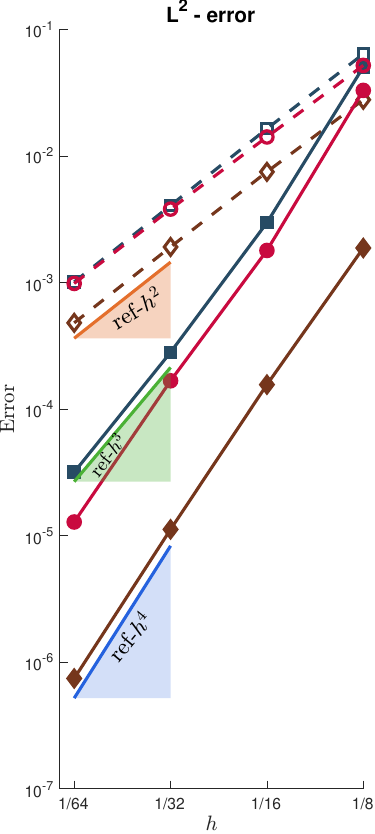}
    \hskip1.5em
    \includegraphics[scale=0.48]{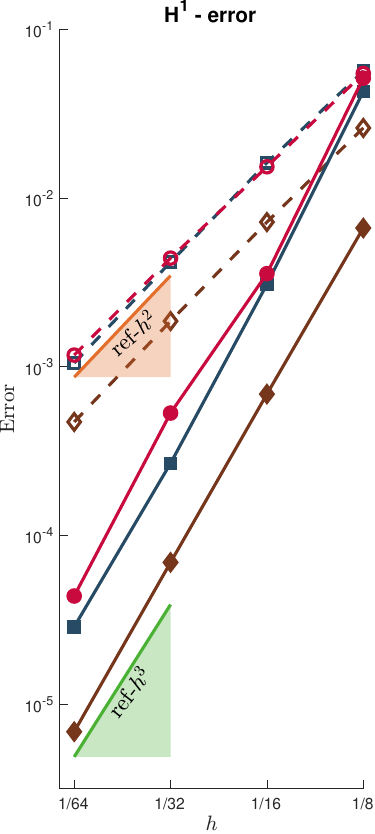}
    \hskip1.5em
    \includegraphics[scale=0.48]{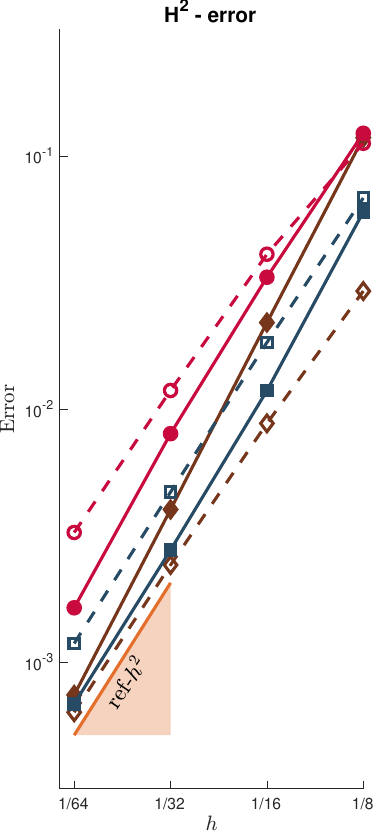}
      \caption{  Example~\ref{ex:collocationPoison}.
     Plots of the relative errors~\eqref{eq:eqiuv2seminorms} computed with respect to the $L^2$-norm and with respect to the $H^1$ and $H^2$-seminorms by solving the Poisson's equation~\eqref{eq:Poisson} over the Domains~D, E and F using the $C^2$-smooth discretization space~$\mathcal{W}^2_h$ based on the underlying mixed degree spline space $\mathcal{S}_h^{(\ab{3}, \ab{5}),\ab{2}}([0,1]^2)$ for the corresponding mixed degree Greville and mixed degree superconvergent collocation points.}
    \label{fig:collocation_poissonBL}
\end{figure}
While in case of the mixed degree Greville collocation points, the numerical results indicate for all six considered domains convergence orders of~$\mathcal{O}(h^{p_1-1}) = \mathcal{O}(h^{2}) $ for the $L^2$, $H^1$ and $H^2$-(semi)norms, in case of the mixed degree superconvergent points, the numerical results show for the one-patch Domains~A and D orders of~$\mathcal{O}(h^{p_1+1}) = \mathcal{O}(h^{4})$,  $\mathcal{O}(h^{p_1}) = \mathcal{O}(h^{3})$ and $\mathcal{O}(h^{p_1-1}) = \mathcal{O}(h^{2})$ for the $L^2$, $H^1$ and $H^2$-(semi)norms, respectively, and for the multi-patch Domains B, C, E and F orders of~$\mathcal{O}(h^{p_1}) = \mathcal{O}(h^{3}), \mathcal{O}(h^{p_1}) = \mathcal{O}(h^{3})$ and $\mathcal{O}(h^{p_1-1}) = \mathcal{O}(h^{2})$ for the $L^2$, $H^1$ and $H^2$-(semi)norms, respectively.
As seen, the orders for the mixed degree superconvergent points are higher in comparison with the orders for the mixed degree Greville collocation points in case of the $L^{2}$-norm and $H^1$-seminorm. In addition, note that the higher order in the $L^2$ norm for the superconvergent points in case of a one-patch domain compared to the case of a multi-patch domain has already been observed in \cite{KaVi20}.
%
\end{ex}

\begin{ex} \label{ex:collocationBiharmonic}
We solve now the biharmonic equation over the three bilinear multi-patch Domains~A--C, cf.~Fig.~\ref{fig:collocation_biharmonic}, and over the biquadratic one-patch Domain~D and the bilinear-like $G^4$ multi-patch Domain~F, cf.~Fig.~\ref{fig:collocation_biharmonicBL}. For this purpose, we use the $C^4$-smooth discretization space~$\mathcal{W}^4_h$ based on the underlying mixed degree spline space $\mathcal{S}_h^{(\ab{5}, \ab{9}),\ab{4}}([0,1]^2)$.  
The resulting relative errors~\eqref{eq:eqiuv2seminorms} with respect to the $L^2$ and $H^m$-(semi)norms, $m=1,\ldots,4$, by using the mixed degree Greville and the mixed degree superconvergent points are shown in Fig.~\ref{fig:collocation_biharmonic} and Fig.~\ref{fig:collocation_biharmonicBL}, respectively.
In case of the mixed Greville collocation points, the numerical results indicate for all five considered domains convergence orders of~$\mathcal{O}(h^{p_1-3}) = \mathcal{O}(h^{2}) $ for the $L^2$ and for all $H^m$-(semi)norms. In case of the mixed degree superconvergent points, the numerical results show for the two one-patch domains, i.e.~for the Domains A and D, orders of~$\mathcal{O}(h^{p_1-1}) = \mathcal{O}(h^{4})$ for the $L^2$, $H^1$ and $H^2$-(semi)norms, and orders of~$\mathcal{O}(h^{p_1-2}) = \mathcal{O}(h^{3})$ and $\mathcal{O}(h^{p_1-3}) = \mathcal{O}(h^{2})$ for the $H^3$ and $H^4$-(semi)norms, respectively. Furthermore, the error plots in case of the mixed degree superconvergent points indicate for the remaining multi-patch domains, i.e. for the Domains B, C and F, the orders~$\mathcal{O}(h^{p_1-2}) = \mathcal{O}(h^{3})$ for all the (semi)norms except for the $H^4$-seminorm, where the order $\mathcal{O}(h^{p_1-3}) = \mathcal{O}(h^{2})$ is already the optimal one. 
\begin{figure}[htb!]
    \centering
     \includegraphics[scale=0.45]{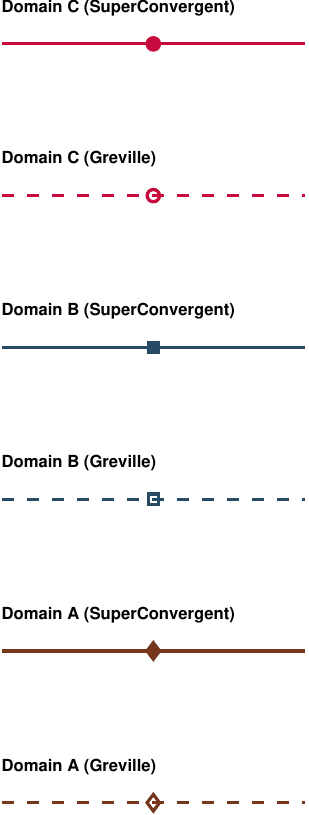}
     \hskip3em
    \includegraphics[scale=0.45]{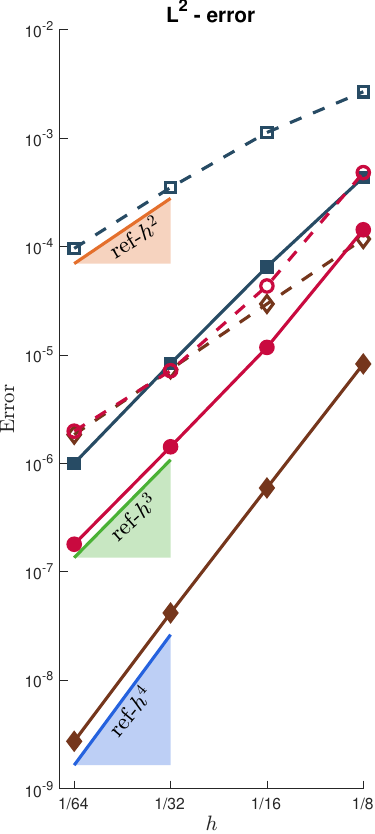}
    \hskip2.5em
    \includegraphics[scale=0.45]{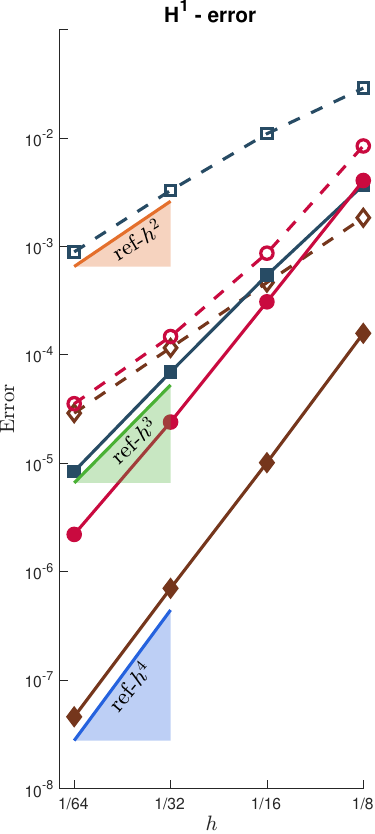}
     \\[0.4cm]
    \includegraphics[scale=0.45]{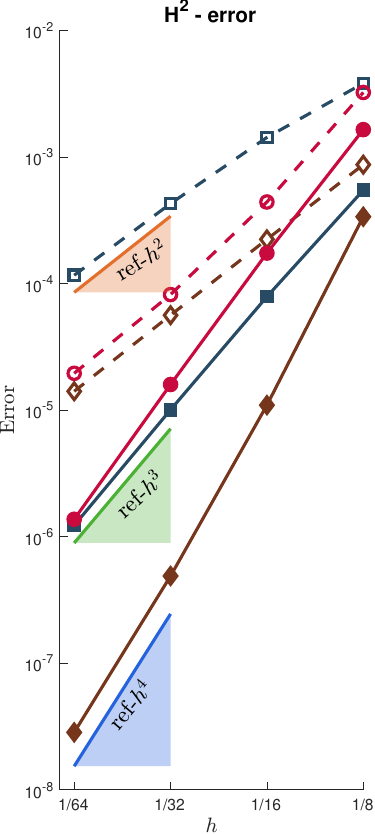}
    \hskip2.em
    \includegraphics[scale=0.45]{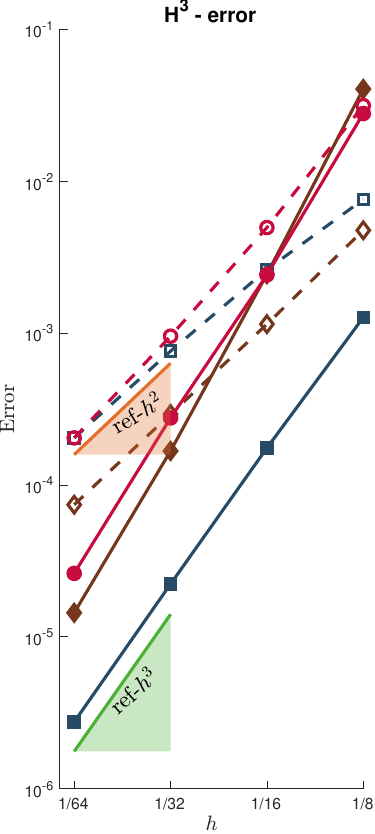}
    \hskip2.5em
    \includegraphics[scale=0.45]{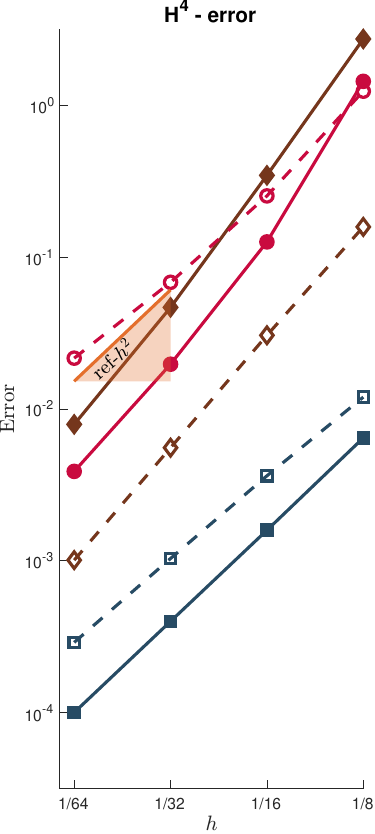}
      \caption{  Example~\ref{ex:collocationBiharmonic}.
    Plots of the relative errors~\eqref{eq:eqiuv2seminorms} computed with respect to the $L^2$-norm and with respect to the $H^1$, $H^2$, $H^3$ and $H^4$-seminorms for solving the biharmonic equation~\eqref{eq:biharmonic} over the bilinear multi-patch Domains A, B and C using the $C^4$-smooth discretization space~$\mathcal{W}^4_h$ based on the underlying mixed degree spline space $\mathcal{S}_h^{(\ab{5}, \ab{9}),\ab{4}}([0,1]^2)$ for the corresponding mixed degree Greville and mixed degree superconvergent collocation points.}
    \label{fig:collocation_biharmonic}
\end{figure}
\begin{figure}[htb!]
    \centering
     \includegraphics[scale=0.45]{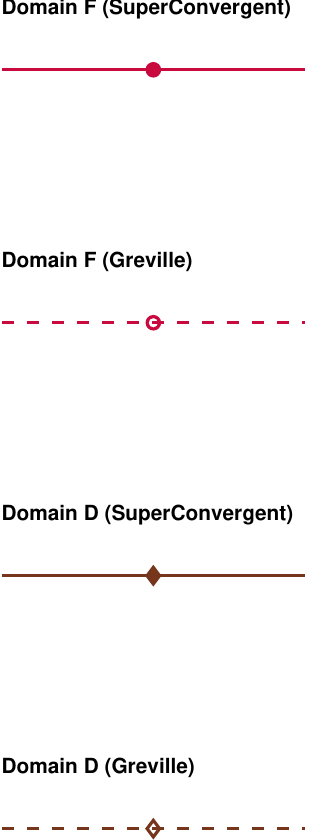}
     \hskip3em
    \includegraphics[scale=0.45]{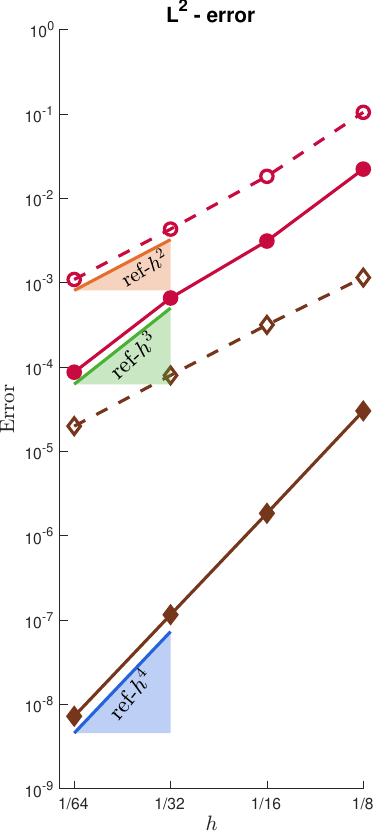}
    \hskip2.5em
    \includegraphics[scale=0.45]{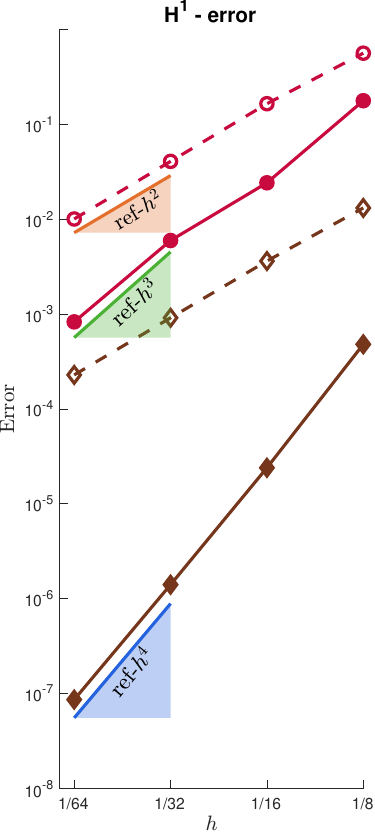}
    \\[0.4cm]
    \includegraphics[scale=0.45]{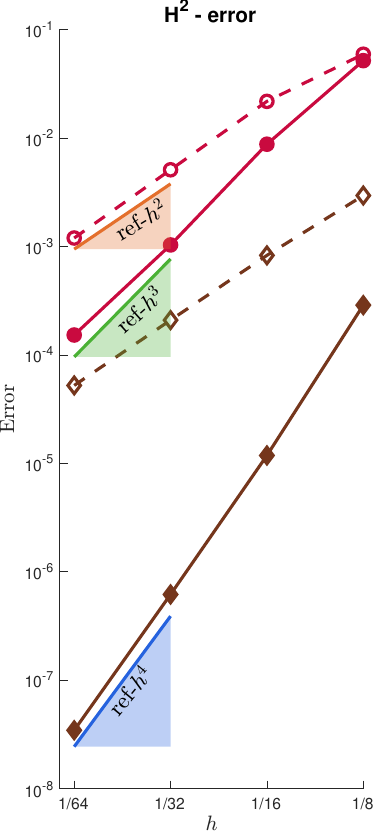}
    \hskip2.em
    \includegraphics[scale=0.45]{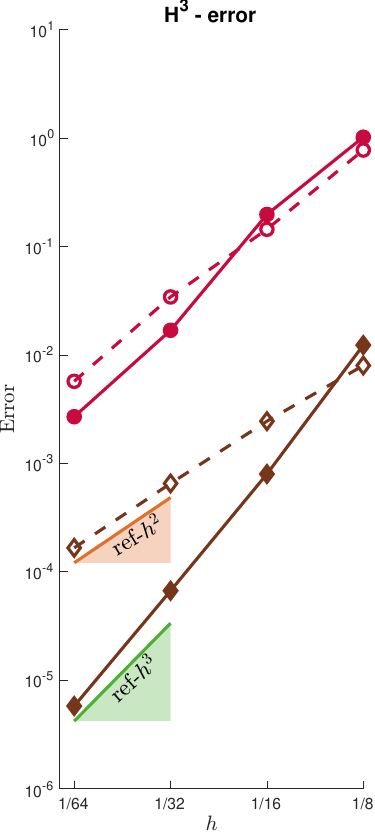}
    \hskip2.5em
    \includegraphics[scale=0.45]{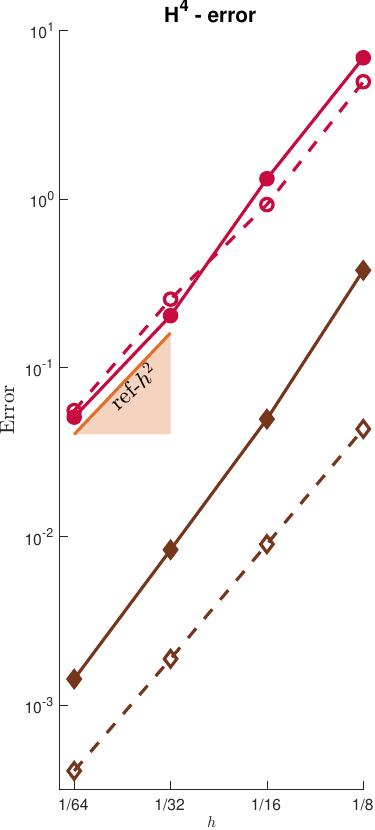}
    \caption{  Example~\ref{ex:collocationBiharmonic}.
    Plots of the relative errors~\eqref{eq:eqiuv2seminorms} computed with respect to the $L^2$-norm and with respect to the $H^1$, $H^2$, $H^3$ and $H^4$-seminorms for solving the biharmonic equation~\eqref{eq:biharmonic} over the Domains D and F using the $C^4$-smooth discretization space~$\mathcal{W}^4_h$ based on the underlying mixed degree spline space $\mathcal{S}_h^{(\ab{5}, \ab{9}),\ab{4}}([0,1]^2)$ for the corresponding mixed degree Greville and mixed degree superconvergent collocation points.}
 \label{fig:collocation_biharmonicBL}
\end{figure}
\end{ex}

\begin{ex} \label{ex:Lshape_squareSystem}
Given the L-shape Domain G, we study three different sets of mixed degree superconvergent collocation points to solve the Poisson's and the biharmonic equation over this bilinear two-patch domain by means of our isogeometric collocation method. 

The first set (Set 1) is just given by the described mixed superconvergent points from Section~\ref{subsec:superconvergent}. For the second set of collocation points (Set 2), we keep the same collocation points away from the inner edge (blue collocation points in Fig.~\ref{fig:collocation_poissonSquare} and Fig.~\ref{fig:collocation_biharmonicSquare}), while for the inner edge and for the $\sm$ neighboring columns on both sides of the inner edge, we select in the direction of the inner edge univariate clustered superconvergent collocation points which correspond to underlying spline spaces with maximal regularity, i.e. $\mathcal{S}_h^{2\sm+1-\ell,2\sm -\ell}([0,1])$, $\ell = 0,\ldots, \sm$, $\sm=2,4$. These superconvergent points on each knot span with respect to the reference interval~$[-1,1]$ are given for the case of the Poisson's equation ($\sm=2$) and of the biharmonic equation ($\sm=4$) in Table~\ref{tab:superconvergentpointsLocal}, cf.~\cite{Maurin2018}. 
\begin{table}[htb!]
    \centering
    \small
    \begin{tabular}{|c||c|c|c|}
         \hline\\[-0.35cm]
         \multirow{2}{*}{Poisson} & $\mathcal{S}_h^{3,2}([0,1])$ & $\mathcal{S}_h^{4,3}([0,1])$ & $\mathcal{S}_h^{5,4}([0,1])$  \\
         \cline{2-4}\\[-0.35cm]
         ($\sm=2$) & $ \mp 1/\sqrt{3}$ & $-1,0,1$ &  
         $\mp \sqrt{225-30\sqrt{30}}/15 $ \\
         \hline
    \end{tabular}
    \vskip1em
    \begin{tabular}{|c||c|c|c|c|c|}
         \hline\\[-0.35cm]
         \multirow{2}{*}{Biharmonic} & $\mathcal{S}_h^{5,4}([0,1])$ & $\mathcal{S}_h^{6,5}([0,1])$ & $\mathcal{S}_h^{7,6}([0,1])$ & $\mathcal{S}_h^{8,7}([0,1])$ & $\mathcal{S}_h^{9,8}([0,1])$ \\
         \cline{2-6}\\[-0.35cm]
          ($\sm=4$) & $\mp 1/\sqrt{3}$ & $-1,0,1$ & $\mp \sqrt{225-30\sqrt{30}}/15$ & $-1,0,1$ & $\mp 0.504918567512653$\\
         \hline
    \end{tabular}
  \caption{Example~\ref{ex:Lshape_squareSystem}. Superconvergent points for the spline spaces $\mathcal{S}_h^{2\sm+1-\ell,2\sm -\ell}([0,1])$, $\ell = 0,\ldots, \sm$, on each knot span with respect to the reference interval~$[-1,1]$, both for the Poisson's equation ($\sm=2$) and for the biharmonic equation ($\sm=4$).} \label{tab:superconvergentpointsLocal}
\end{table}
The clustering is done in an analogous way as in Section~\ref{subsec:superconvergent}, see~Fig.~\ref{fig:collocation_poissonSquare} (below left) and Fig.~\ref{fig:collocation_biharmonicSquare} (below left) for the Poisson's and the biharmonic equation, respectively (in both cases for the red and white points together). This set of points already possesses a much smaller cardinality than the Set 1, but the corresponding linear system is still slightly overdetermined, see Tables~\ref{tab:superconvergentpointsComaprisonPoisson} and \ref{tab:superconvergentpointsComaprisonBiharmonic}. 
\begin{table}[htb!]
    \centering
    \begin{tabular}{|c|c|c||c|c|c|c|}
         \hline     
          \multicolumn{3}{|c||}{} & Dimension & $L^2$-error & $H^1$-error & $H^2$-error  \\
         \hline
         \hline
         \multirow{9}{*}{\rotatebox{90}{{Poisson's equation}}} & \multirow{3}{*}{\rotatebox{90}{$h=\frac{1}{16}$}}& \rotatebox{0}{~{Set 1}~} & $939 \times 744$ & $4.7 \cdot 10^{-5}$  & $1.8 \cdot 10^{-4}$ & $3.5 \cdot 10^{-3}$ \\
         \cline{3-7}
         & & \rotatebox{0}{~{Set 2}~} & $804 \times 744$ & $5.2 \cdot 10^{-5}$ & $2.0 \cdot 10^{-4}$ & $3.6 \cdot 10^{-3}$  \\
         \cline{3-7}
         & & \rotatebox{0}{~{Set 3}~} & $744 \times 744 $ & $1.2 \cdot 10^{-4}$ & $3.2 \cdot 10^{-4}$ & $4.6 \cdot 10^{-3}$  \\
         \cline{2-7}\\[-0.35cm]
         \cline{2-7}
         & \multirow{3}{*}{\rotatebox{90}{$h=\frac{1}{32}$}} & \rotatebox{0}{{Set 1}} &  $2875 \times 2488$ & $3.0 \cdot 10^{-6}$ & $1.9 \cdot 10^{-5}$ & $8.8 \cdot 10^{-4}$  \\
         \cline{3-7}
         & & \rotatebox{0}{{Set 2}} &  $2596 \times 2488$ & $6.3 \cdot 10^{-6}$ & $2.2 \cdot 10^{-5}$ & $8.9 \cdot 10^{-4}$  \\
         \cline{3-7}
         & & \rotatebox{0}{{Set 3}} &  $2488 \times 2488$ & $2.0 \cdot 10^{-5}$ & $4.5 \cdot 10^{-5}$ & $1.1 \cdot 10^{-3}$ \\
         \cline{2-7}\\[-0.35cm]
        \cline{2-7}
        &  \multirow{3}{*}{\rotatebox{90}{$h=\frac{1}{64}$}} & \rotatebox{0}{~{Set 1}~} &  $9819 \times 9048$ & $3.7 \cdot 10^{-7}$ & $2.3 \cdot 10^{-6}$ & $2.2 \cdot 10^{-4}$  \\
         \cline{3-7}
         & & \rotatebox{0}{~{Set 2}~} &  $9252 \times 9048$ & $9.6 \cdot 10^{-7}$ & $2.8 \cdot 10^{-6}$ & $2.2 \cdot 10^{-4}$  \\
         \cline{3-7}
         & & \rotatebox{0}{~{Set 3}~} &  $9048 \times 9048$ & $3.2 \cdot 10^{-6}$ & $6.2 \cdot 10^{-6}$ & $2.5 \cdot 10^{-4}$  \\
         \hline
    \end{tabular}
     \caption{
    Example~\ref{ex:Lshape_squareSystem}.    
     Dimensions of the obtained linear system \eqref{eq:collocationSystemLocalPoisson} for all three different sets of superconvergent collocation points (Set 1--3) as well as the numerical errors with respect to the $L^2$-norm, and $H^1$ and $H^2$-seminorms for the Poisson's equation over the L-shape Domain~G in Fig.~\ref{fig:DomainG}.}    \label{tab:superconvergentpointsComaprisonPoisson}
\end{table}
The third set of collocation points (Set 3) is obtained by selecting in an alternating way a suitable subset of Set 2 for the $s$ neighboring columns on both sides of the inner edge, see Fig.~\ref{fig:collocation_poissonSquare} (below) and Fig.~\ref{fig:collocation_biharmonicSquare} (below) for the Poisson's and the biharmonic equation, respectively (in both cases the red points only). 
\begin{figure}[htb!]
    \centering
     \includegraphics[scale=0.36]{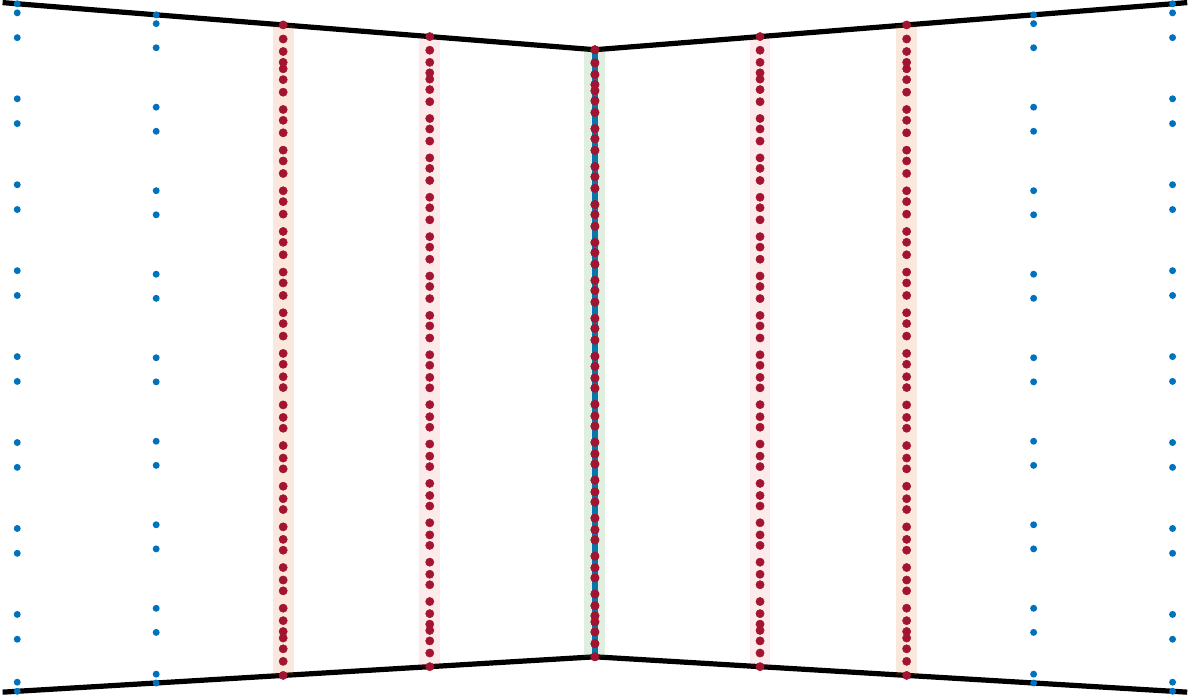}
     \hskip0.em
    \includegraphics[scale=0.36]{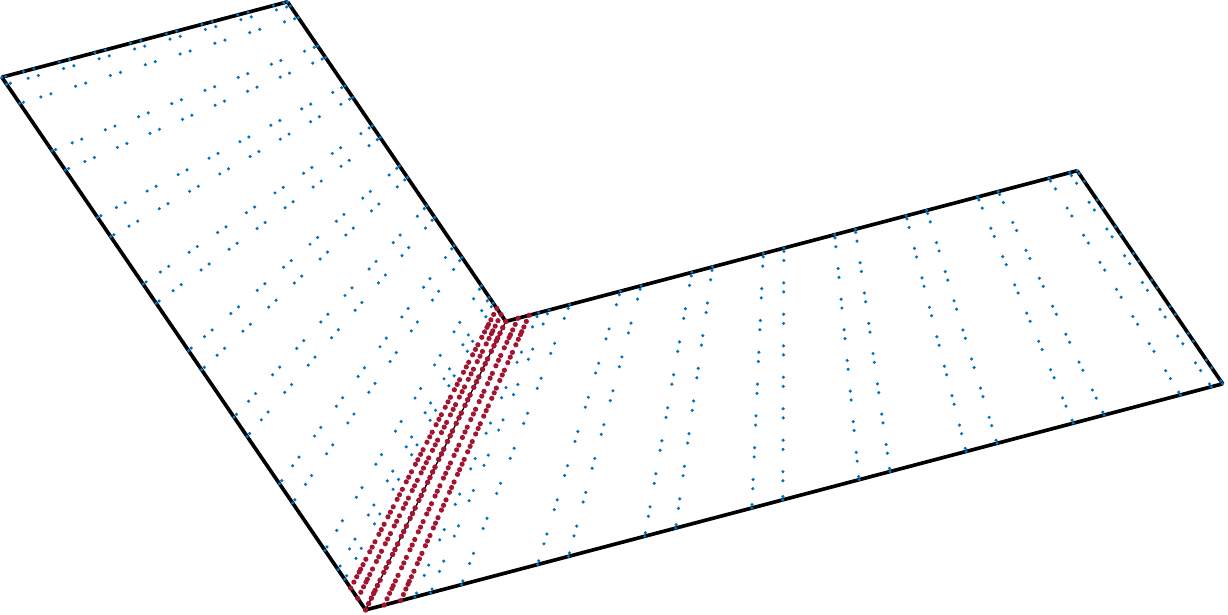}
    \\[0.3cm]
    \includegraphics[scale=0.36]{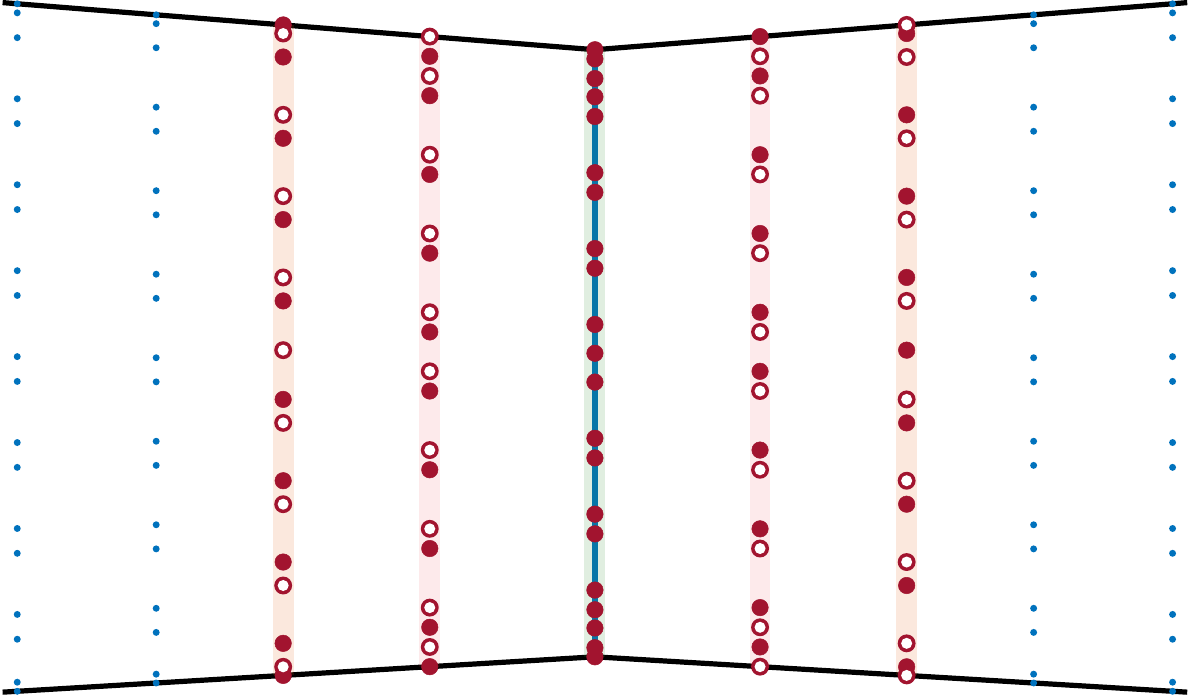}
    \hskip0.em
    \includegraphics[scale=0.36]{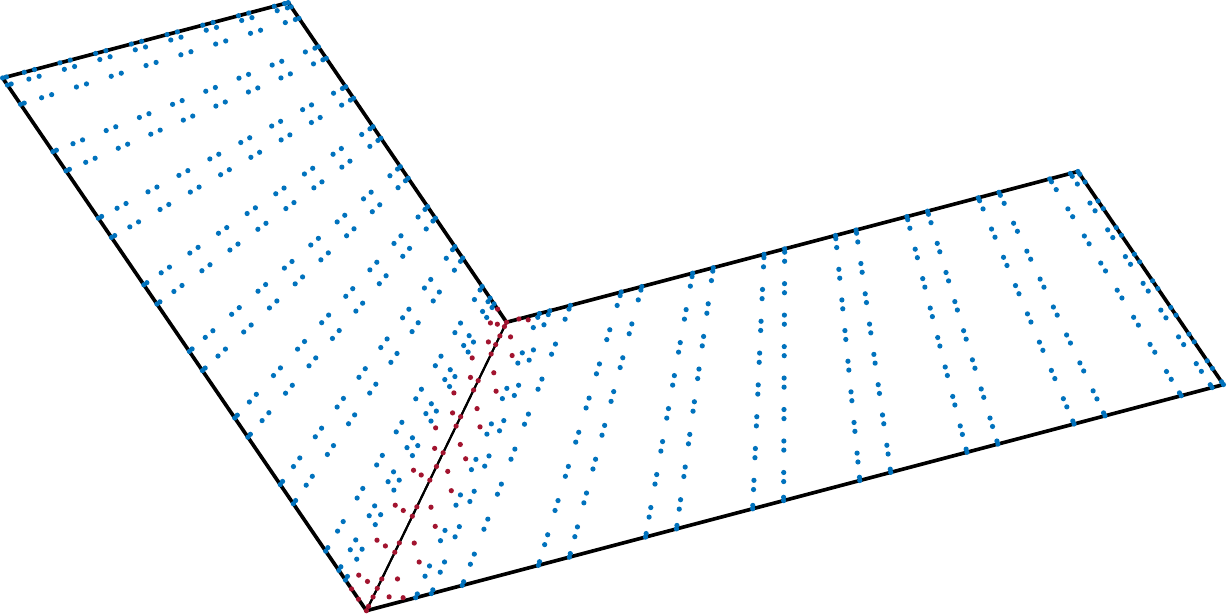}    \caption{Example~\ref{ex:Lshape_squareSystem}. Different sets of mixed degree superconvergent points (for $h=\frac{1}{16}$) in the vicinity of the inner edge for the Poisson's equation. Set 1 is shown above left, while Set 2 (red and white points) and Set 3 (red points only) are presented below left. On the right column, we visualize the application of these points (Set 1 above and Set 3 below) to the L-shape Domain~G.}
    \label{fig:collocation_poissonSquare}
\end{figure}

We compare in Table~\ref{tab:superconvergentpointsComaprisonPoisson} and Table~\ref{tab:superconvergentpointsComaprisonBiharmonic} the dimensions of the obtained linear systems \eqref{eq:collocationSystemLocalPoisson} as well as the numerical errors with respect to the $L^2$-norm, and $H^1$ and $H^2$-seminorms for the Poisson's equation and with respect to the $L^2$-norm, and $H^1, \ldots,H^4$-seminorms for the biharmonic equation, respectively, in both cases for the exact solution \eqref{eq:exactSolution} over the L-shape Domain~G shown in Fig.~\ref{fig:DomainG}. 
 The numerical results indicate the expected convergence orders for all (semi)norms, cf. Examples~\ref{ex:collocationPoison} and \ref{ex:collocationBiharmonic}.
\begin{table}[htb]
    \centering
    \begin{tabular}{|c|c|c|c|c|c|c|c|c|}
         \hline     
          \multicolumn{3}{|c|}{} & Dimension & $L^2$-error & $H^1$-error & $H^2$-error & $H^3$-error & $H^4$-error \\
         \hline
         \hline
         \multirow{9}{*}{\rotatebox{90}{{Biharmonic equation}}}& \multirow{3}{*}{\rotatebox{90}{$h=\frac{1}{16}$}}& \rotatebox{0}{{Set 1}} & $1597 \times 955$ & $1.8 \cdot 10^{-5}$  & $2.9 \cdot 10^{-5}$ & $5.4 \cdot 10^{-5}$ & $3.0 \cdot 10^{-4}$ & $3.5 \cdot 10^{-3}$\\
         \cline{3-9}
         & & \rotatebox{0}{{Set 2}} & $1064 \times 955$ & $3.0 \cdot 10^{-5}$ & $4.8 \cdot 10^{-5}$ & $8.5 \cdot 10^{-5}$ & $6.4 \cdot 10^{-4}$ & $1.6 \cdot 10^{-2}$ \\
         \cline{3-9}
         & & \rotatebox{0}{{Set 3}} & $955 \times 955$ & $1.0 \cdot 10^{-4}$ & $2.0 \cdot 10^{-4}$ & $7.7 \cdot 10^{-4}$ & $3.0 \cdot 10^{-2}$ & $4.3 \cdot 10^{-1}$ \\
         \cline{2-9}\\[-0.35cm]
         \cline{2-9}
         & \multirow{3}{*}{\rotatebox{90}{$h=\frac{1}{32}$}} & \rotatebox{0}{{Set 1}} &  $4145 \times 2859$ & $1.4 \cdot 10^{-6}$ & $2.2 \cdot 10^{-6}$ & $4.1 \cdot 10^{-6}$ & $2.6 \cdot 10^{-5}$ & $7.7 \cdot 10^{-4}$ \\
         \cline{3-9}
         & & \rotatebox{0}{{Set 2}} &  $3048 \times 2859$ & $2.4 \cdot 10^{-6}$ & $3.8 \cdot 10^{-6}$ & $6.7 \cdot 10^{-6}$ & $5.6 \cdot 10^{-5}$ & $2.9 \cdot 10^{-3}$ \\
         \cline{3-9}
         & & \rotatebox{0}{{Set 3}} &  $2859 \times 2859$ & $1.1 \cdot 10^{-5}$ & $2.0 \cdot 10^{-5}$ & $5.0 \cdot 10^{-5}$ & $2.5 \cdot 10^{-3}$ & $6.0 \cdot 10^{-2}$ \\
         \cline{2-9}\\[-0.35cm]
        \cline{2-9}
        &  \multirow{3}{*}{\rotatebox{90}{$h=\frac{1}{64}$}} & \rotatebox{0}{{Set 1}} &  $12305 \times 9739$ & $1.0 \cdot 10^{-7}$ & $1.6 \cdot 10^{-7}$ & $2.9 \cdot 10^{-7}$ & $2.5 \cdot 10^{-6}$ & $1.8 \cdot 10^{-4}$ \\
         \cline{3-9}
         & & \rotatebox{0}{{Set 2}} &  $10088 \times 9739$ & $1.7 \cdot 10^{-7}$ & $2.7 \cdot 10^{-7}$ & $4.7 \cdot 10^{-7}$ & $3.9 \cdot 10^{-6}$ & $3.6 \cdot 10^{-4}$ \\
         \cline{3-9}
         & & \rotatebox{0}{{Set 3}} &  $9739 \times 9739$ & $1.3 \cdot 10^{-6}$ & $2.6 \cdot 10^{-6}$ & $5.2 \cdot 10^{-6}$ & $2.0 \cdot 10^{-4}$ & $1.0 \cdot 10^{-2}$ \\
         \hline
    \end{tabular}
     \caption{
    Example~\ref{ex:Lshape_squareSystem}.    
     Dimensions of the obtained linear system \eqref{eq:collocationSystemLocal} for all three different sets of mixed degree superconvergent collocation points (Set 1--3) as well as the numerical errors with respect to the $L^2$-norm, and $H^1,\ldots,H^4$-seminorms for the biharmonic equation over the L-shape Domain~G in Fig.~\ref{fig:DomainG}.} \label{tab:superconvergentpointsComaprisonBiharmonic}
\end{table}
\begin{figure}[htb!]
    \centering
     \includegraphics[scale=0.355]{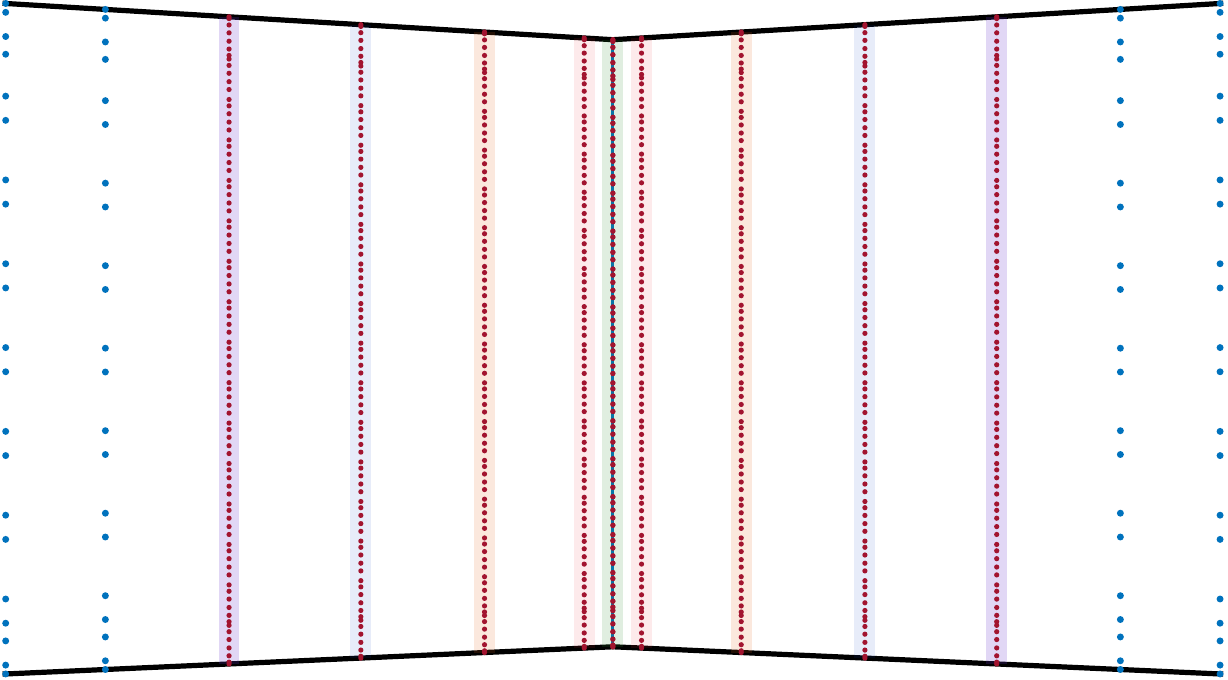}
     \hskip0em
    \includegraphics[scale=0.355]{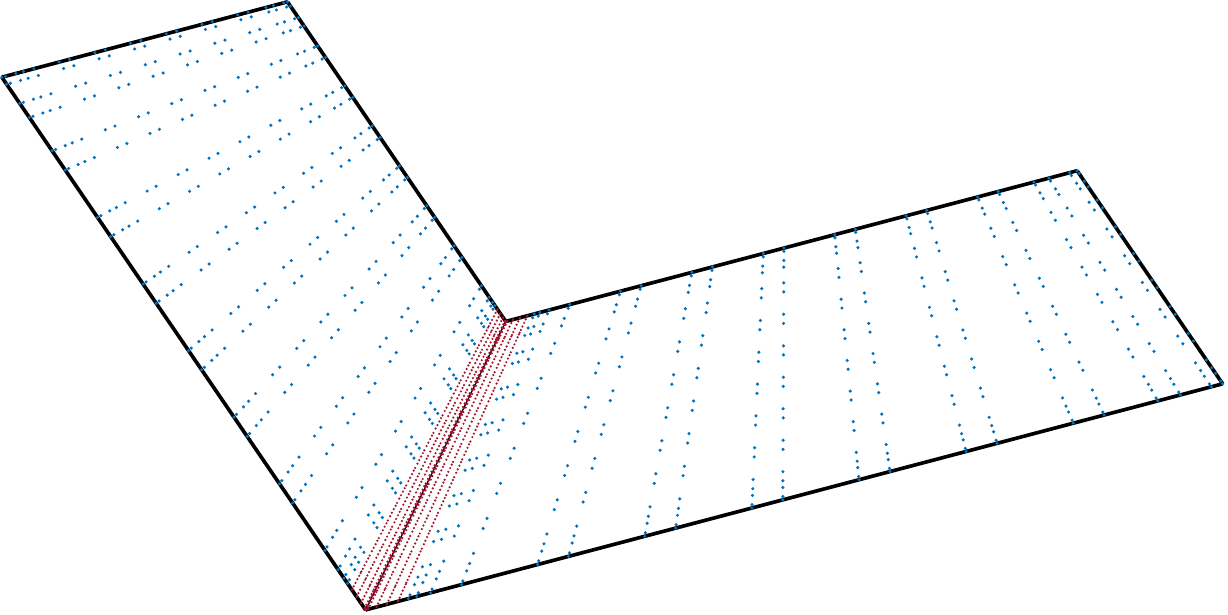}
    \\[0.3cm]
    \includegraphics[scale=0.355]{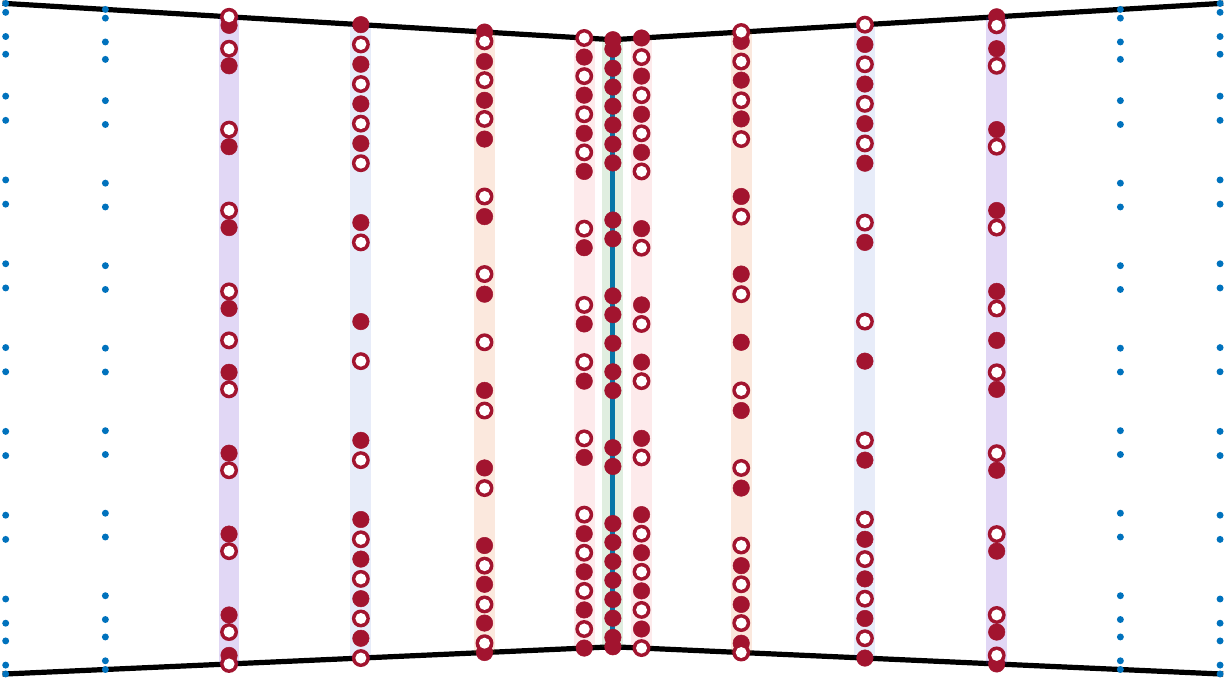}
    \hskip0em
    \includegraphics[scale=0.355]{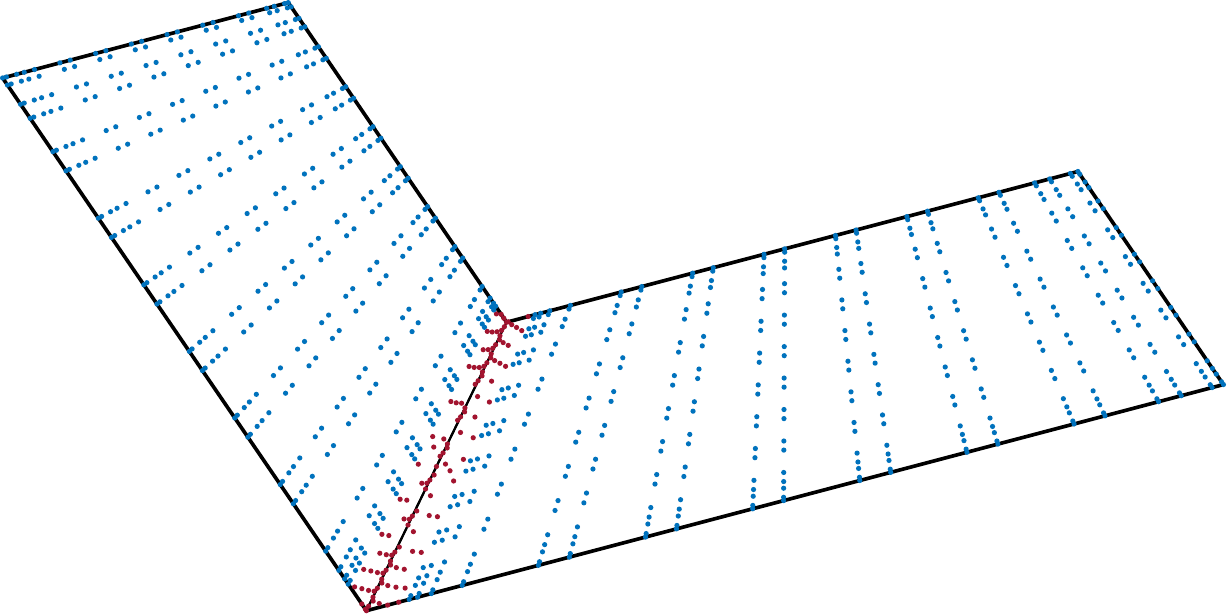}
    \caption{Example~\ref{ex:Lshape_squareSystem}. Different sets of superconvergent points (for $h=\frac{1}{16}$) in the vicinity of the inner edge for the biharmonic equation. Set 1 is shown above left, while Set 2 (red and white points) and Set 3 (red points only) are presented below left. On the right column we visualize the application of these points (Set 1 above and Set 3 below) to the L-shape Domain~G.}
\label{fig:collocation_biharmonicSquare}
\end{figure}
\end{ex}

\section{Conclusion} \label{sec:Conclusion}

We presented a new method for solving the Poisson's and the biharmonic equation in strong form via isogeometric collocation over planar bilinearly parameterized multi-patch domains. We further extended the approach on the basis of examples to the case of bilinear-like $G^s$ multi-patch geometries, which allow the modeling of curved boundaries, too. The proposed isogeometric collocation method is based on the use of a modified version of the $C^s$-smooth mixed degree isogeometric spline space~\cite{KaKoVi24b}, where the minimal possible degree~$p=s+1$ is employed everywhere on the multi-patch domain except in a small neighborhood of the inner edges and of the vertices of patch valency greater than one where a degree $p=2s+1$ is needed. In this way, it is possible to significantly reduce the number of degrees of freedom involved compared to the technique used in \cite{KaVi20} and \cite{KaKoVi24} for solving the Poisson's and the biharmonic equation, respectively, where the usage of the high degree~$p=2s+1$ is required on the entire multi-patch domain. 

For the choice of the collocation points, we introduced and studied two different sets of collocation points, namely the mixed degree Greville and the mixed degree superconvergent points. We tested both choices on one-patch domains as well as on multi-patch domains. For both sets of collocation points, we numerically studied the convergence behavior with respect to the $L^2$-norm and with respect to (equivalents of) the $H^m$-seminorms, $1\leq m\leq s$, $s=2,4$. 

In general, the number of collocation points in the multi-patch case is larger than the dimension of the discretization space, and hence a least squares approach is needed to solve the resulting linear system. Therefore, we made on the basis of a two-patch domain example a first strategy to select a suitable subset of the mixed degree superconvergent collocation points to impose also in the multi-patch case, in particular for the two-patch case in this work, a square linear system and to avoid the necessity of a least-squares method for solving it. A first open issue is to generalize the proposed strategy for the two-patch case to multi-patch domains with also extraordinary vertices. 
Moreover, the study of other applications such as the Kirchhoff plate or Kirchhoff-Love shell problem, 
and the extension of our approach to multi-patch surfaces or multi-patch volumes could be interesting research topics.

\paragraph*{\bf Acknowledgment}

M. Kapl has been partially supported by the Austrian Science Fund (FWF) through the project P~33023-N. V.~Vitrih has been partially supported by the 
Slovenian Research and Innovation Agency (research program P1-0404 and research projects N1-0296 and J1-4414). A.~Kosma\v c has been partially supported by the Slovenian Research and Innovation Agency (research program P1-0404, research project N1-0296 and Young Researchers Grant). This support is gratefully acknowledged.

\appendix

\section{The five possible variants of the underlying mixed degree spline space 
} 
\label{sec:AppendixSec2}

Let us consider in detail the five possible variants of the underlying mixed degree spline space $\mathcal{S}_h^{(\ab{p}_1, \ab{p}_2),\ab{\sm}}([0,1]^2)$ introduced in \eqref{eq:MixedSpace}, where
the five different 
cases depend on the number and position of edges of $[0,1]^2$ which correspond to the inner edges of the multi-patch domain $\overline{\Omega}$. 

\paragraph{Four inner edges}

The subspaces $\mathcal{S}_{1} ([0,1]^2)$, $\mathcal{\overline{S}}_1 ([0,1]^2)$ and $\mathcal{S}_2 ([0,1]^2)$ which form the space $\mathcal{S}_h^{(\ab{p}_1, \ab{p}_2),\ab{\sm}}([0,1]^2)$ in \eqref{eq:MixedSpace} are defined as
\begin{align*}
& \mathcal{S}_{1} ([0,1]^2) =  \Span \left\{ 
N_{j_1,j_2}^{\ab{p}_1,\ab{\sm}}, \; j_1,j_2= \sm+1,\ldots, n_{p_1}-\sm-2\right\},\\
& \mathcal{\overline{S}}_1 ([0,1]^2) = \Span \left\{ 
\overline{N}_{j_1}^{\,{p}_1,{\sm}} \, \overline{N}_{j_2}^{\,{p}_1,{\sm}}, \; j_1=1,\ldots,\sm, n_{p_1}-\sm-1, \ldots,n_{p_1}-2; \; j_2=1,\ldots,n_{p_1}-2 \right\} \oplus  \\
& \quad \qquad \Span \left\{ 
\overline{N}_{j_1}^{\,{p}_1,{\sm}} \, \overline{N}_{j_2}^{\,{p}_1,{\sm}}, \; 
j_1=\sm+1,\ldots,n_{p_1}-\sm-2;\; 
j_2=1,\ldots,\sm, n_{p_1}-\sm-1, \ldots,n_{p_1}-2 \right\},\\
&\mathcal{S}_2 ([0,1]^2) = \Span \left\{ 
N_{j_1,j_2}^{\ab{p}_2,\ab{\sm}}, \; j_1=0,\ldots,\sm, n_{p_2}-\sm-1, \ldots,n_{p_2}-1; \; j_2=0,\ldots,n_{p_2}-1 \right\} \oplus  \nonumber \\
 & \qquad \quad  \Span  \left\{ 
N_{j_1,j_2}^{\ab{p}_2,\ab{\sm}},\;  
j_1=\sm+1,\ldots,n_{p_2}-\sm-2;\;
j_2 =  0,\ldots,\sm, n_{p_2}-\sm-1, \ldots,n_{p_2}-1 \right\}, 
\end{align*}
cf.~Fig.~\ref{fig:SpacesMixed}~(above left).


\paragraph{Three inner edges}

Assume that the left, bottom and top edge correspond to inner edges of $\overline{\Omega}$, cf.~Fig.~\ref{fig:SpacesMixed}~(above right). Then
\begin{align*}
& \mathcal{S}_{1} ([0,1]^2) =  \Span \left\{ 
N_{j_1,j_2}^{\ab{p}_1,\ab{\sm}}, \; j_1 = \sm+1,\ldots,n_{p_1}-1 ;\, j_2= \sm+1,\ldots, n_{p_1}-\sm-2\right\},\\
& \mathcal{\overline{S}}_1 ([0,1]^2) = \Span \left\{ 
\overline{N}_{j_1}^{\,{p}_1,{\sm}} \, \overline{N}_{j_2}^{\,{p}_1,{\sm}},\;
j_1=1,\ldots,\sm ; \; j_2=1,\ldots,n_{p_1}-2 \right\} \oplus\\
& \qquad  \quad \;\;\,\Span \left\{ 
{N}_{j_1}^{\,{p}_1,{\sm}} \, \overline{N}_{j_2}^{\,{p}_1,{\sm}}, \; j_1=\sm+1,\ldots,n_{p_1}-1; \; j_2=1,\ldots,\sm, n_{p_1}-\sm-1, \ldots,n_{p_1}-2 \right\},\\
&\mathcal{S}_2 ([0,1]^2) = \Span \left\{ 
N_{j_1,j_2}^{\ab{p}_2,\ab{\sm}}, \; j_1=0,\ldots,\sm; \; j_2=0,\ldots,n_{p_2}-1 \right\} \oplus \nonumber \\
 & \qquad  \quad \;\;\,\Span  \left\{ 
N_{j_1,j_2}^{\ab{p}_2,\ab{\sm}}, \;  j_1=\sm+1,\ldots,n_{p_2}-1 ; \; j_2 =  0,\ldots,\sm, n_{p_2}-\sm-1, \ldots,n_{p_2}-1 \right\}.
\end{align*} 
\paragraph{Two adjacent inner edges}

Assume that the left and the bottom edge correspond to inner edges of $\overline{\Omega}$, cf.~Fig.~\ref{fig:SpacesMixed}~(below left). Then
\begin{align*}
& \mathcal{S}_{1} ([0,1]^2) =  \Span \left\{ 
N_{j_1,j_2}^{\ab{p}_1,\ab{\sm}}, \; j_1 = \sm+1,\ldots,n_{p_1}-1 ;\, j_2= \sm+1,\ldots, n_{p_1}-1\right\},\\
& \mathcal{\overline{S}}_1 ([0,1]^2) = \Span \left\{ 
\overline{N}_{j_1}^{\,{p}_1,{\sm}} \, \overline{N}_{j_2}^{\,{p}_1,{\sm}}
, \;  j_1=1,\ldots,\sm; \; j_2=1,\ldots,\sm \right\}
\oplus 
\Span \left\{ 
\overline{N}_{j_1}^{\,{p}_1,{\sm}} \, {N}_{j_2}^{\,{p}_1,{\sm}}
,  j_1=1,\ldots,s; \right.
\\
& 
\left.
j_2=\sm+1,\ldots,n_{p_1}-1 \right\} \oplus 
\Span \left\{ 
{N}_{j_1}^{\,{p}_1,{\sm}} \, \overline{N}_{j_2}^{\,{p}_1,{\sm}}, \;  j_1=\sm+1,\ldots,n_{p_1}-1; \; j_2=1,\ldots,\sm \right\},
\\
&\mathcal{S}_2 ([0,1]^2) = \Span \left\{ 
N_{j_1,j_2}^{\ab{p}_2,\ab{\sm}}, \; j_1=0,\ldots,\sm; \; j_2=0,\ldots,n_{p_2}-1 \right\} \oplus \nonumber \\
 & \qquad \qquad \quad \;\, \Span  \left\{ 
N_{j_1,j_2}^{\ab{p}_2,\ab{\sm}}, \;  j_1=\sm+1,\ldots,n_{p_2}-1; \; j_2 =  0,\ldots,\sm \right\}. 
\end{align*}

\paragraph{Two opposite inner edges}

Assume that the bottom and top edge correspond to inner edges of $\overline{\Omega}$, cf.~Fig.~\ref{fig:SpacesMixed}~(below middle). Then
\begin{align*}
& \mathcal{S}_{1} ([0,1]^2) =  \Span \left\{ 
N_{j_1,j_2}^{\ab{p}_1,\ab{\sm}}, \; j_1 = 0,\ldots,n_{p_1}-1 ;\, j_2= \sm+1,\ldots, n_{p_1}-\sm-2\right\},\\
& \mathcal{\overline{S}}_1 ([0,1]^2) = \Span \left\{ 
{N}_{j_1}^{\,{p}_1,{\sm}} \, \overline{N}_{j_2}^{\,{p}_1,{\sm}}, \; j_1=0,\ldots,n_{p_1}-1; \; j_2=1,\ldots,\sm, n_{p_1}-\sm-1, \ldots,n_{p_1}-2 \right\}, \\
&\mathcal{S}_2 ([0,1]^2) = \Span  \left\{ 
N_{j_1,j_2}^{\ab{p}_2,\ab{\sm}}, \; j_1=0,\ldots,n_{p_2}-1; \; j_2 =  0,\ldots,\sm, n_{p_2}-\sm-1, \ldots,n_{p_2}-1 \right\}.
\end{align*}

\paragraph{One inner edge}

Assume that only the bottom edge corresponds to an inner edge of $\overline{\Omega}$, see ~Fig.~\ref{fig:SpacesMixed}~(below right). Then
\begin{align*}
& \hskip-3.3cm \mathcal{S}_{1} ([0,1]^2) =  \Span \left\{ 
N_{j_1,j_2}^{\ab{p}_1,\ab{\sm}}, \; j_1 = 0,\ldots,n_{p_1}-1 ;\, j_2= \sm+1,\ldots, n_{p_1}-1\right\},\\
& \hskip-3.3cm  \mathcal{\overline{S}}_1 ([0,1]^2) = \Span \left\{ 
{N}_{j_1}^{\,{p}_1,{\sm}} \, \overline{N}_{j_2}^{\,{p}_1,{\sm}}, \; j_1=0,\ldots,n_{p_1}-1; \; j_2=1,\ldots,\sm \right\}, \\
& \hskip-3.3cm  \mathcal{S}_2 ([0,1]^2) = \Span  \left\{ 
N_{j_1,j_2}^{\ab{p}_2,\ab{\sm}}, \;  j_1=0,\ldots,n_{p_2}-1 ;\; j_2 =  0,\ldots,\sm \right\}. 
\end{align*}

\section{Control points of the bilinear-like $G^4$ five-patch geometry 
} 
\label{sec:AppendixPoints}

The spline geometry \eqref{eq:five_patch_domain_bilinearLike} consists of five biquintic geometry mappings $\ab{F}^{(i)}$ from the space $\mathcal{S}_{1/4}^{\ab{5}, \ab{4}}([0,1]^2) \times \mathcal{S}_{1/4}^{\ab{5}, \ab{4}}([0,1]^2)$ parameterized as in \eqref{eq:five_patch_domain_bilinearLike}
with the control points~$\ab{c}_{j_1,j_2}^{(i)}$ given in Table~\ref{five_patch_domain_bilinearLikeTable}. 
\begin{table}[htb!]
\tiny
\centering
\setlength{\tabcolsep}{-0.38em}
\begin{tabular}{|ccccccccc|}
\hline
\multicolumn{9}{|c|}{$\f{c}_{j_1,j_2}^{(0)}$} \\[0.1cm]
\hline
& & & & & & & & \\[-0.15cm]
  $(0,0)$ & $\left(\frac{3}{32},\frac{459}{1600}\right) $&$ \left(\frac{9}{32},\frac{1377}{1600}\right)$ &$
   \left(\frac{9}{16},\frac{1377}{800}\right) $&$ \left(\frac{15}{16},\frac{459}{160}\right) $&$ \left(\frac{21}{16},\frac{3213}{800}\right) $&$
   \left(\frac{51}{32},\frac{7803}{1600}\right) $&$ \left(\frac{57}{32},\frac{8721}{1600}\right) $&$ \left(\frac{15}{8},\frac{459}{80}\right) $\\[0.1cm] 
 $\left(\frac{3}{10},0\right) $&$ \left(\frac{249}{640},\frac{9069}{32000}\right) $&$ \left(\frac{363}{640},\frac{27207}{32000}\right) $&$
   \left(\frac{267}{320},\frac{27207}{16000}\right) $&$ \left(\frac{381}{320},\frac{9069}{3200}\right) $&$
   \left(\frac{99}{64},\frac{63483}{16000}\right) $&$ \left(\frac{1161}{640},\frac{154173}{32000}\right) $&$
   \left(\frac{255}{128},\frac{172311}{32000}\right) $&$ \left(\frac{333}{160},\frac{9069}{1600}\right)$ \\[0.1cm] 
 $\left(\frac{9}{10},0\right) $&$ \left(\frac{627}{640},\frac{8847}{32000}\right) $&$ \left(\frac{729}{640},\frac{26541}{32000}\right) $&$
   \left(\frac{441}{320},\frac{26541}{16000}\right) $&$ \left(\frac{543}{320},\frac{8847}{3200}\right) $&$
   \left(\frac{129}{64},\frac{61929}{16000}\right) $&$ \left(\frac{1443}{640},\frac{150399}{32000}\right) $&$
   \left(\frac{309}{128},\frac{168093}{32000}\right) $&$ \left(\frac{399}{160},\frac{8847}{1600}\right)$ \\[0.1cm] 
 $\left(\frac{9}{5},0\right)$ &$ \left(\frac{597}{320},\frac{4257}{16000}\right) $&$ \left(\frac{639}{320},\frac{12771}{16000}\right) $&$
   \left(\frac{351}{160},\frac{12771}{8000}\right) $&$ \left(\frac{393}{160},\frac{4257}{1600}\right) $&$
   \left(\frac{87}{32},\frac{29799}{8000}\right) $&$ \left(\frac{933}{320},\frac{72369}{16000}\right) $&$
   \left(\frac{195}{64},\frac{80883}{16000}\right) $&$ \left(\frac{249}{80},\frac{4257}{800}\right)$ \\[0.1cm] 
$ (3,0) $&$ \left(\frac{195}{64},\frac{807}{3200}\right) $&$ \left(\frac{201}{64},\frac{2421}{3200}\right) $&$
   \left(\frac{105}{32},\frac{2421}{1600}\right) $&$ \left(\frac{111}{32},\frac{807}{320}\right) $&$ \left(\frac{117}{32},\frac{5649}{1600}\right)
   $&$ \left(\frac{243}{64},\frac{13719}{3200}\right) $&$ \left(\frac{249}{64},\frac{15333}{3200}\right) $&$
   \left(\frac{63}{16},\frac{807}{160}\right)$ \\[0.1cm] 
$ \left(\frac{21}{5},0\right) $&$ \left(\frac{1353}{320},\frac{3813}{16000}\right) $&$ \left(\frac{1371}{320},\frac{11439}{16000}\right) $&$
   \left(\frac{699}{160},\frac{11439}{8000}\right) $&$ \left(\frac{717}{160},\frac{3813}{1600}\right) $&$
   \left(\frac{1911}{400},\frac{1383}{400}\right) $&$ \left(\frac{1881}{400},\frac{903}{200}\right) $&$
   \left(\frac{1857}{400},\frac{2097}{400}\right) $&$ \left(\frac{459}{100},\frac{2247}{400}\right) $\\[0.1cm] 
 $\left(\frac{51}{10},0\right) $&$ \left(\frac{3273}{640},\frac{7293}{32000}\right) $&$ \left(\frac{3291}{640},\frac{21879}{32000}\right) $&$
   \left(\frac{1659}{320},\frac{21879}{16000}\right) $&$ \left(\frac{1677}{320},\frac{7293}{3200}\right) $&$
   \left(\frac{144}{25},\frac{153}{50}\right) $&$ \left(\frac{567}{100},\frac{1641}{400}\right) $&$ \left(\frac{2157}{400},\frac{2007}{400}\right)
   $&$ \left(\frac{423}{80},\frac{273}{50}\right) $\\[0.1cm] 
 $\left(\frac{57}{10},0\right) $&$ \left(\frac{3651}{640},\frac{7071}{32000}\right) $&$ \left(\frac{3657}{640},\frac{21213}{32000}\right) $&$
   \left(\frac{1833}{320},\frac{21213}{16000}\right) $&$ \left(\frac{1839}{320},\frac{7071}{3200}\right) $&$
   \left(\frac{321}{50},\frac{1107}{400}\right) $&$ \left(\frac{2577}{400},\frac{711}{200}\right) $&$
   \left(\frac{1227}{200},\frac{1779}{400}\right) $&$ \left(\frac{381}{64},\frac{137}{28}\right) $\\[0.1cm]
  $(6,0) $&$ \left(6,\frac{87}{400}\right) $&$ \left(6,\frac{261}{400}\right) $&$ \left(6,\frac{261}{200}\right) $&$ \left(6,\frac{87}{40}\right) $&$
   \left(\frac{2703}{400},\frac{1041}{400}\right) $&$ \left(\frac{2733}{400},\frac{663}{200}\right) $&$ \left(\frac{13}{2},\frac{1653}{400}\right)
   $&$ \left(\frac{63}{10},\frac{201}{44}\right)$\\[0.1cm]
\hline
\hline
\multicolumn{9}{|c|}{$\f{c}_{j_1,j_2}^{(1)}$} \\[0.1cm]
\hline
& & & & & & & & \\[-0.15cm]
$(0,0) $&$ \left(-\frac{39}{160},\frac{57}{320}\right) $&$ \left(-\frac{117}{160},\frac{171}{320}\right) $&$
   \left(-\frac{117}{80},\frac{171}{160}\right) $&$ \left(-\frac{39}{16},\frac{57}{32}\right) $&$ \left(-\frac{273}{80},\frac{399}{160}\right) $&$
   \left(-\frac{663}{160},\frac{969}{320}\right) $&$ \left(-\frac{741}{160},\frac{1083}{320}\right) $&$ \left(-\frac{39}{8},\frac{57}{16}\right)$ \\[0.1cm] 
 $\left(\frac{3}{32},\frac{459}{1600}\right) $&$ \left(-\frac{237}{1600},\frac{7353}{16000}\right) $&$
   \left(-\frac{1011}{1600},\frac{12879}{16000}\right) $&$ \left(-\frac{543}{400},\frac{1323}{1000}\right) $&$
   \left(-\frac{93}{40},\frac{1611}{800}\right) $&$  \hspace{-0.15cm} \left(-\frac{1317}{400},\frac{5409}{2000}\right) $&$ \hspace{-0.15cm}
   \left(-\frac{6429}{1600},\frac{51561}{16000}\right) $&$ \hspace{-0.2cm} \hspace{0.03cm}\left(-\frac{7203}{1600},\frac{57087}{16000}\right)   $&$ \hspace{-0.10cm}
   \left(-\frac{759}{160},\frac{1197}{320}\right) $ \\[0.1cm] 
 $\left(\frac{9}{32},\frac{1377}{1600}\right) $&$ \left(\frac{69}{1600},\frac{16359}{16000}\right) $&$
   \left(-\frac{693}{1600},\frac{21537}{16000}\right) $&$ \left(-\frac{459}{400},\frac{3663}{2000}\right) $&$ \hspace{-0.1cm}
   \left(-\frac{21}{10},\frac{1983}{800}\right) $&$ \hspace{-0.1cm} \left(-\frac{1221}{400},\frac{1563}{500}\right) $&$ \hspace{-0.1cm}
   \left(-\frac{6027}{1600},\frac{57783}{16000}\right) $&$ \hspace{-0.15cm} \left(-\frac{6789}{1600},\frac{62961}{16000}\right) $&$ \hspace{-0.1cm}
   \left(-\frac{717}{160},\frac{1311}{320}\right)$ \\[0.1cm] 
 $\left(\frac{9}{16},\frac{1377}{800}\right) $&$ \left(\frac{33}{100},\frac{7467}{4000}\right) $&$ \left(-\frac{27}{200},\frac{8631}{4000}\right) $&$
   \left(-\frac{333}{400},\frac{10377}{4000}\right) $&$ \left(-\frac{141}{80},\frac{2541}{800}\right) $&$
   \left(-\frac{1077}{400},\frac{15033}{4000}\right) $&$ \left(-\frac{339}{100},\frac{16779}{4000}\right) $&$
   \left(-\frac{771}{200},\frac{17943}{4000}\right) $&$ \left(-\frac{327}{80},\frac{741}{160}\right)$ \\[0.1cm] 
 $\left(\frac{15}{16},\frac{459}{160}\right) $&$ \left(\frac{57}{80},\frac{1197}{400}\right) $&$ \left(\frac{21}{80},\frac{81}{25}\right) $&$
   \left(-\frac{33}{80},\frac{2889}{800}\right) $&$ \left(-\frac{21}{16},\frac{657}{160}\right) $&$ \left(-\frac{177}{80},\frac{3681}{800}\right) $&$
   \left(-\frac{231}{80},\frac{1989}{400}\right) $&$ \left(-\frac{267}{80},\frac{261}{50}\right) $&$ \left(-\frac{57}{16},\frac{171}{32}\right)$ \\[0.1cm] 
 $\left(\frac{21}{16},\frac{3213}{800}\right) $&$ \left(\frac{219}{200},\frac{16473}{4000}\right) $&$ \left(\frac{33}{50},\frac{17289}{4000}\right)
   $&$ \left(\frac{3}{400},\frac{18513}{4000}\right) $&$ \left(-\frac{69}{80},\frac{4029}{800}\right) $&$
   \left(-\frac{723}{400},\frac{2259}{400}\right) $&$ \left(-\frac{1137}{400},\frac{591}{100}\right) $&$
   \left(-\frac{1419}{400},\frac{243}{40}\right) $&$ \hspace{-0.1cm} \left(-\frac{783}{200},\frac{2457}{400}\right)$ \\[0.1cm] 
 $\left(\frac{51}{32},\frac{7803}{1600}\right) $&$ \left(\frac{2211}{1600},\frac{79401}{16000}\right) $&$
   \left(\frac{1533}{1600},\frac{82143}{16000}\right) $&$ \left(\frac{129}{400},\frac{5391}{1000}\right) $&$
   \left(-\frac{21}{40},\frac{4587}{800}\right) $&$ \left(-\frac{111}{100},  \frac{1293}{200}\right) $&$
   \left(-\frac{213}{100},\frac{1341}{200}\right) $&$ \hspace{-0.3cm} \left(-\frac{1239}{400},\frac{1347}{200}\right) $&$
   \hspace{-0.23cm} \left(-\frac{1419}{400},\frac{2709}{400}\right)$ \\[0.1cm] 
 $\left(\frac{57}{32},\frac{8721}{1600}\right) $&$ \left(\frac{2517}{1600},\frac{88407}{16000}\right) $&$
   \left(\frac{1851}{1600},\frac{90801}{16000}\right) $&$ \left(\frac{213}{400},\frac{11799}{2000}\right) $&$
   \left(-\frac{3}{10},\frac{4959}{800}\right) $&$ \left(-\frac{249}{400},\frac{1401}{200}\right) $&$
   \left(-\frac{273}{200},\frac{2913}{400}\right) $&$ \left(-\frac{921}{400},\frac{291}{40}\right) $&$ \left(-\frac{167}{60},\frac{507}{70}\right)$
   \\[0.1cm] 
 $\left(\frac{15}{8},\frac{459}{80}\right) $&$ \left(\frac{267}{160},\frac{9291}{1600}\right) $&$ \left(\frac{201}{160},\frac{9513}{1600}\right) $&$
   \left(\frac{51}{80},\frac{4923}{800}\right) $&$ \left(-\frac{3}{16},\frac{1029}{160}\right) $&$ \left(-\frac{141}{400},\frac{2913}{400}\right) $&$
   \left(-\frac{201}{200},\frac{3033}{400}\right) $&$ \left(-\frac{49}{26},\frac{753}{100}\right) $&$ \left(-\frac{189}{80},\frac{389}{52}\right)$
   \\[0.1cm]
\hline
\hline
\multicolumn{9}{|c|}{$\f{c}_{j_1,j_2}^{(2)}$} \\[0.1cm]
\hline
& & & & & & & & \\[-0.15cm]
 $(0,0) $&$ -\hspace{-0.05cm}\left(\frac{39}{160},\frac{57}{320}\right) $&$ -\hspace{-0.05cm}\left(\frac{117}{160},\frac{171}{320}\right) $&$
   -\hspace{-0.05cm}\left(\frac{117}{80},\frac{171}{160}\right) $&$ -\hspace{-0.05cm}\left(\frac{39}{16},\frac{57}{32}\right) $&$ -\hspace{-0.05cm}\left(\frac{273}{80},\frac{399}{160}\right) $&$
   -\hspace{-0.05cm}\left(\frac{663}{160},\frac{969}{320}\right) $&$ -\hspace{-0.05cm}\left(\frac{741}{160},\frac{1083}{320}\right) $&$ -\hspace{-0.05cm}\left(\frac{39}{8},\frac{57}{16}\right)$
   \\[0.1cm]
 $\left(-\frac{39}{160},\frac{57}{320}\right) $&$ \left(-\frac{771}{1600},0\right) $&$ 
 -\hspace{-0.05cm}\left(\frac{1533}{1600},\frac{57}{160}\right) $&$
   -\hspace{-0.05cm}\left(\frac{669}{400},\frac{57}{64}\right) $&$ -\hspace{-0.05cm}\left(\frac{21}{8},\frac{513}{320}\right) $&$ -\hspace{-0.05cm}\left(\frac{1431}{400},\frac{741}{320}\right)
   $&$ -\hspace{-0.05cm}\left(\frac{6867}{1600},\frac{57}{20}\right) $&$ \hspace{-0.2cm} -\hspace{-0.05cm}\left(\frac{7629}{1600},\frac{513}{160}\right) $&$ \hspace{-0.15cm}
   -\hspace{-0.05cm}\left(\frac{801}{160},\frac{1083}{320}\right) $\\[0.1cm]
 $\left(-\frac{117}{160},\frac{171}{320}\right) $&$ \left(-\frac{1533}{1600},\frac{57}{160}\right) $&$ \left(-\frac{2259}{1600},0\right) $&$
   -\hspace{-0.05cm}\left(\frac{837}{400},\frac{171}{320}\right) $&$ -\hspace{-0.05cm}\left(3,\frac{399}{320}\right) $&$ -\hspace{-0.05cm}\left(\frac{1563}{400},\frac{627}{320}\right) $&$
    -\hspace{-0.05cm} \left(\frac{7341}{1600},\frac{399}{160}\right) $&$ -\hspace{-0.05cm}\left(\frac{8067}{1600},\frac{57}{20}\right) $&$
   -\hspace{-0.05cm}\left(\frac{843}{160},\frac{969}{320}\right)$ \\[0.1cm]
 $\left(-\frac{117}{80},\frac{171}{160}\right) $&$ \left(-\frac{669}{400},\frac{57}{64}\right) $&$ \left(-\frac{837}{400},\frac{171}{320}\right) $&$
   \left(-\frac{1089}{400},0\right) $&$ 
   -\hspace{-0.05cm}\left(\frac{57}{16},\frac{57}{80}\right) $&$ -\hspace{-0.05cm}\left(\frac{1761}{400},\frac{57}{40}\right) $&$
   -\hspace{-0.05cm} \left(\frac{2013}{400},\frac{627}{320}\right) $&$ -\hspace{-0.05cm} \left(\frac{2181}{400},\frac{741}{320}\right) $&$
   -\hspace{-0.05cm} \left(\frac{453}{80},\frac{399}{160}\right) $\\[0.1cm]
 $\left(-\frac{39}{16},\frac{57}{32}\right) $&$ \left(-\frac{21}{8},\frac{513}{320}\right) $&$ \left(-3,\frac{399}{320}\right) $&$
   \left(-\frac{57}{16},\frac{57}{80}\right) $&$ \left(-\frac{69}{16},0\right) $&$ 
   -\hspace{-0.05cm}\left(\frac{81}{16},\frac{57}{80}\right) $&$
   -\hspace{-0.05cm}\left(\frac{45}{8},\frac{399}{320}\right) $&$ 
   -\hspace{-0.05cm}\left(6,\frac{513}{320}\right) $&$ -\hspace{-0.05cm}\left(\frac{99}{16},\frac{57}{32}\right)$ \\[0.1cm]
 $\left(-\frac{273}{80},\frac{399}{160}\right) $&$ \left(-\frac{1431}{400},\frac{741}{320}\right) $&$ \left(-\frac{1563}{400},\frac{627}{320}\right)
   $&$ \left(-\frac{1761}{400},\frac{57}{40}\right) $&$ \left(-\frac{81}{16},\frac{57}{80}\right) $&$ -\hspace{-0.05cm}\left(\frac{297}{50},\frac{3}{400}\right) $&$
   -\hspace{-0.05cm}\left(\frac{651}{100},\frac{183}{200}\right) $&$ 
    -\hspace{-0.05cm} \left(\frac{1377}{200},\frac{309}{200}\right) $&$ \hspace{-0.1cm}
   -\hspace{-0.05cm}\left(\frac{2823}{400},\frac{753}{400}\right) $\\[0.1cm]
 $\left(-\frac{663}{160},\frac{969}{320}\right) $&$ \left(-\frac{6867}{1600},\frac{57}{20}\right) $&$
   \left(-\frac{7341}{1600},\frac{399}{160}\right) $&$ \left(-\frac{2013}{400},\frac{627}{320}\right) $&$
   \left(-\frac{45}{8},\frac{399}{320}\right) $&$ \left(-\frac{651}{100},\frac{9}{10}\right) $&$ -\hspace{-0.05cm}\left(\frac{141}{20},\frac{3}{400}\right) $&$
   -\hspace{-0.05cm}\left(\frac{369}{50},\frac{369}{400}\right) $&$ \hspace{-0.1cm} 
   -\hspace{-0.05cm}\left(\frac{3021}{400},\frac{537}{400}\right)$ \\[0.1cm]
 \hspace{0.1cm} $\left(-\frac{741}{160},\frac{1083}{320}\right) $&$ \left(-\frac{7629}{1600},\frac{513}{160}\right) $&$
   \left(-\frac{8067}{1600},\frac{57}{20}\right) $&$ \left(-\frac{2181}{400},\frac{741}{320}\right) $&$ \left(-6,\frac{513}{320}\right) $&$
   \left(-\frac{1377}{200},\frac{153}{100}\right) $&$ \left(-\frac{369}{50},\frac{363}{400}\right) $&$
   -\hspace{-0.05cm}\left(\frac{3063}{400},\frac{3}{400}\right) $&$ -\hspace{-0.05cm}\left(\frac{373}{48},\frac{19}{40}\right)$ \\[0.1cm]
 $\left(-\frac{39}{8},\frac{57}{16}\right) $&$ \left(-\frac{801}{160},\frac{1083}{320}\right) $&$ \left(-\frac{843}{160},\frac{969}{320}\right) $&$
   \left(-\frac{453}{80},\frac{399}{160}\right) $&$ \left(-\frac{99}{16},\frac{57}{32}\right) $&$ \left(-\frac{2823}{400},\frac{15}{8}\right) $&$
   \left(-\frac{3021}{400},\frac{267}{200}\right) $&$ \left(-\frac{373}{48},\frac{19}{40}\right) $&$ \left(-\frac{63}{8},0\right) $\\[0.1cm]
\hline
\hline
\multicolumn{9}{|c|}{$\f{c}_{j_1,j_2}^{(3)}$} \\[0.1cm]
\hline
& & & & & & & & \\[-0.15cm]
 $(0,0) $&$ \left(\frac{3}{32},-\frac{459}{1600}\right) $&$ \left(\frac{9}{32},-\frac{1377}{1600}\right) $&$
   \left(\frac{9}{16},-\frac{1377}{800}\right) $&$ \left(\frac{15}{16},-\frac{459}{160}\right) $&$ \left(\frac{21}{16},-\frac{3213}{800}\right) $&$
   \left(\frac{51}{32},-\frac{7803}{1600}\right) $&$ \left(\frac{57}{32},-\frac{8721}{1600}\right) $&$ \left(\frac{15}{8},-\frac{459}{80}\right)$ \\[0.1cm]
 $-\hspace{-0.05cm}\left(\frac{39}{160},\frac{57}{320}\right) $&$ \hspace{0.09cm}-\hspace{-0.05cm} \left(\frac{237}{1600},\frac{7353}{16000}\right) $&$
   \left(\frac{69}{1600},-\frac{16359}{16000}\right) $&$ \left(\frac{33}{100},-\frac{7467}{4000}\right) $&$
   \left(\frac{57}{80},-\frac{1197}{400}\right) $&$ \hspace{-0.1cm} \left(\frac{219}{200},-\frac{16473}{4000}\right) $&$
   \hspace{-0.13cm} \left(\frac{2211}{1600},-\frac{79401}{16000}\right) $&$ \hspace{-0.2cm} \hspace{0.02cm} \left(\frac{2517}{1600},-\frac{88407}{16000}\right) $&$
   \hspace{-0.1cm} \left(\frac{267}{160},-\frac{9291}{1600}\right)$ \\[0.1cm]
 $-\hspace{-0.05cm}\left(\frac{117}{160},\frac{171}{320}\right) $&$ \hspace{0.05cm} -\hspace{-0.05cm}\left(\frac{1011}{1600},\frac{12879}{16000}\right) $&$
   -\hspace{-0.05cm}\left(\frac{693}{1600},\frac{21537}{16000}\right) $&$ -\hspace{-0.05cm}\left(\frac{27}{200},\frac{8631}{4000}\right) $&$
   \left(\frac{21}{80},-\frac{81}{25}\right) $&$ \hspace{-0.2cm} \left(\frac{33}{50},-\frac{17289}{4000}\right) $&$
   \hspace{-0.2cm} \left(\frac{1533}{1600},-\frac{82143}{16000}\right) $&$ \hspace{-0.2cm}  \left(\frac{1851}{1600},-\frac{90801}{16000}\right) $&$
   \hspace{-0.1cm} \left(\frac{201}{160},-\frac{9513}{1600}\right)$ \\[0.1cm]
 $-\hspace{-0.05cm}\left(\frac{117}{80},\frac{171}{160}\right) $&$ -\hspace{-0.05cm}\left(\frac{543}{400},\frac{1323}{1000}\right) $&$
   -\hspace{-0.05cm}\left(\frac{459}{400},\frac{3663}{2000}\right) $&$ -\hspace{-0.05cm}\left(\frac{333}{400},\frac{10377}{4000}\right) $&$
   -\hspace{-0.05cm}\left(\frac{33}{80},\frac{2889}{800}\right) $&$ \left(\frac{3}{400},-\frac{18513}{4000}\right) $&$
   \left(\frac{129}{400},-\frac{5391}{1000}\right) $&$ \left(\frac{213}{400},-\frac{11799}{2000}\right) $&$
   \left(\frac{51}{80},-\frac{4923}{800}\right)$ \\[0.1cm]
 $-\hspace{-0.05cm}\left(\frac{39}{16},\frac{57}{32}\right) $&$ -\hspace{-0.05cm}\left(\frac{93}{40},\frac{1611}{800}\right) $&$ -\hspace{-0.05cm}\left(\frac{21}{10},\frac{1983}{800}\right) $&$
   -\hspace{-0.05cm}\left(\frac{141}{80},\frac{2541}{800}\right) $&$ -\hspace{-0.05cm}\left(\frac{21}{16},\frac{657}{160}\right) $&$
   -\hspace{-0.05cm}\left(\frac{69}{80},\frac{4029}{800}\right) $&$ -\hspace{-0.05cm}\left(\frac{21}{40},\frac{4587}{800}\right) $&$ -\hspace{-0.05cm}\left(\frac{3}{10},\frac{4959}{800}\right)
   $&$ -\hspace{-0.05cm}\left(\frac{3}{16},\frac{1029}{160}\right)$ \\[0.1cm]
 $-\hspace{-0.05cm}\left(\frac{273}{80},\frac{399}{160}\right) $&$ -\hspace{-0.05cm} \left(\frac{1317}{400},\frac{5409}{2000}\right) $&$
   -\hspace{-0.05cm}\left(\frac{1221}{400},\frac{1563}{500}\right) $&$ -\hspace{-0.05cm}\left(\frac{1077}{400},\frac{15033}{4000}\right) $&$
   -\hspace{-0.05cm}\left(\frac{177}{80},\frac{3681}{800}\right) $&$ -\hspace{-0.05cm}\left(\frac{723}{400},\frac{453}{80}\right) $&$
   -\hspace{-0.05cm}\left(\frac{111}{100},\frac{162}{25}\right) $&$ -\hspace{-0.05cm}\left(\frac{249}{400},\frac{351}{50}\right) $&$
   -\hspace{-0.05cm}\left(\frac{141}{400},\frac{729}{100}\right)$ \\[0.1cm]
 $-\hspace{-0.05cm}\left(\frac{663}{160},\frac{969}{320}\right) $&$ -\hspace{-0.05cm} \left(\frac{6429}{1600},\frac{51561}{16000}\right) $&$
   -\hspace{-0.05cm}\left(\frac{6027}{1600},\frac{57783}{16000}\right) $&$ -\hspace{-0.05cm}\left(\frac{339}{100},\frac{16779}{4000}\right) $&$
   -\hspace{-0.05cm}\left(\frac{231}{80},\frac{1989}{400}\right) $&$ -\hspace{-0.05cm}\left(\frac{1137}{400},\frac{237}{40}\right) $&$
   -\hspace{-0.05cm}\left(\frac{213}{100},\frac{537}{80}\right) $&$ -\hspace{-0.05cm}\left\{\frac{273}{200},\frac{2919}{400}\right) $&$
   -\hspace{-0.05cm}\left(\frac{201}{200},\frac{759}{100}\right)$ \\[0.1cm]
 $-\hspace{-0.05cm}\left(\frac{741}{160},\frac{1083}{320}\right) $&$ -\hspace{-0.05cm}\left(\frac{7203}{1600},\frac{57087}{16000}\right) $&$
   -\hspace{-0.05cm}\left(\frac{6789}{1600},\frac{62961}{16000}\right) $&$ -\hspace{-0.05cm}\left(\frac{771}{200},\frac{17943}{4000}\right) $&$
   -\hspace{-0.05cm}\left(\frac{267}{80},\frac{261}{50}\right) $&$ -\hspace{-0.05cm}\left(\frac{1419}{400},\frac{609}{100}\right) $&$
   -\hspace{-0.05cm}\left(\frac{1239}{400},\frac{27}{4}\right) $&$ -\hspace{-0.05cm}\left(\frac{921}{400},\frac{2913}{400}\right) $&$
   -\hspace{-0.05cm}\left(\frac{49}{26},\frac{753}{100}\right) $\\[0.1cm]
 $-\hspace{-0.05cm}\left(\frac{39}{8},\frac{57}{16}\right) $&$ -\hspace{-0.05cm}\left(\frac{759}{160},\frac{1197}{320}\right) $&$ -\hspace{-0.05cm}\left(\frac{717}{160},\frac{1311}{320}\right)
   $&$ -\hspace{-0.05cm}\left(\frac{327}{80},\frac{741}{160}\right) $&$ -\hspace{-0.05cm}\left(\frac{57}{16},\frac{171}{32}\right) $&$
   -\hspace{-0.05cm}\left(\frac{783}{200},\frac{123}{20}\right) $&$ -\hspace{-0.05cm}\left(\frac{1419}{400},\frac{339}{50}\right) $&$
   -\hspace{-0.05cm}\left(\frac{167}{60},\frac{507}{70}\right) $&$ -\hspace{-0.05cm}\left(\frac{189}{80},\frac{389}{52}\right)$ \\[0.1cm]
\hline
\hline
\multicolumn{9}{|c|}{$\f{c}_{j_1,j_2}^{(4)}$} \\[0.1cm]
\hline
& & & & & & & & \\[-0.15cm]
$(0,0) $&$ \left(\frac{3}{10},0\right) $&$ \left(\frac{9}{10},0\right) $&$ \left(\frac{9}{5},0\right) $&$ (3,0) $&$ \left(\frac{21}{5},0\right) $&$
   \left(\frac{51}{10},0\right) $&$ \left(\frac{57}{10},0\right) $&$ (6,0) $\\[0.1cm]
 $\left(\frac{3}{32},-\frac{459}{1600}\right) $&$ \left(\frac{249}{640},-\frac{9069}{32000}\right) $&$
   \left(\frac{627}{640},-\frac{8847}{32000}\right) $&$ \left(\frac{597}{320},-\frac{4257}{16000}\right) $&$
   \left(\frac{195}{64},-\frac{807}{3200}\right) $&$ \left(\frac{1353}{320},-\frac{3813}{16000}\right) $&$
   \hspace{0.15cm}  \left(\frac{3273}{640},-\frac{7293}{32000}\right) $&$ \hspace{0.15cm} \left(\frac{3651}{640},-\frac{7071}{32000}\right) $&$ \left(6,-\frac{87}{400}\right)$ \\[0.1cm]
 $\left(\frac{9}{32},-\frac{1377}{1600}\right) $&$ \left(\frac{363}{640},-\frac{27207}{32000}\right) $&$
   \left(\frac{729}{640},-\frac{26541}{32000}\right) $&$ \left(\frac{639}{320},-\frac{12771}{16000}\right) $&$
   \left(\frac{201}{64},-\frac{2421}{3200}\right) $&$ \left(\frac{1371}{320},-\frac{11439}{16000}\right) $&$
   \hspace{0.08cm}  \left(\frac{3291}{640},-\frac{21879}{32000}\right) $&$ \hspace{0.08cm}  \left(\frac{3657}{640},-\frac{21213}{32000}\right) $&$ \left(6,-\frac{261}{400}\right) $\\[0.1cm]
 $\left(\frac{9}{16},-\frac{1377}{800}\right) $&$ \left(\frac{267}{320},-\frac{27207}{16000}\right) $&$
   \left(\frac{441}{320},-\frac{26541}{16000}\right) $&$ \left(\frac{351}{160},-\frac{12771}{8000}\right) $&$
   \left(\frac{105}{32},-\frac{2421}{1600}\right) $&$ \left(\frac{699}{160},-\frac{11439}{8000}\right) $&$
   \left(\frac{1659}{320},-\frac{21879}{16000}\right) $&$ \left(\frac{1833}{320},-\frac{21213}{16000}\right) $&$ \left(6,-\frac{261}{200}\right) $\\[0.1cm]
 $\left(\frac{15}{16},-\frac{459}{160}\right) $&$ \left(\frac{381}{320},-\frac{9069}{3200}\right) $&$
   \left(\frac{543}{320},-\frac{8847}{3200}\right) $&$ \left(\frac{393}{160},-\frac{4257}{1600}\right) $&$
   \left(\frac{111}{32},-\frac{807}{320}\right) $&$ \left(\frac{717}{160},-\frac{3813}{1600}\right) $&$
   \left(\frac{1677}{320},-\frac{7293}{3200}\right) $&$ \left(\frac{1839}{320},-\frac{7071}{3200}\right) $&$ \left(6,-\frac{87}{40}\right) $\\[0.1cm]
 $\left(\frac{21}{16},-\frac{3213}{800}\right) $&$ \left(\frac{99}{64},-\frac{63483}{16000}\right) $&$
   \left(\frac{129}{64},-\frac{61929}{16000}\right) $&$ \left(\frac{87}{32},-\frac{29799}{8000}\right) $&$
   \left(\frac{117}{32},-\frac{5649}{1600}\right) $&$ \left(\frac{1911}{400},-\frac{1389}{400}\right) $&$
   \left(\frac{144}{25},-\frac{123}{40}\right) $&$ \left(\frac{321}{50},-\frac{1113}{400}\right) $&$ \hspace{-0.15cm}\left(\frac{2703}{400},-\frac{261}{100}\right)$
   \\[0.1cm]
 $\left(\frac{51}{32},-\frac{7803}{1600}\right) $&$ \hspace{0.05cm} \left(\frac{1161}{640},-\frac{154173}{32000}\right) $&$
   \hspace{0.12cm}  \left(\frac{1443}{640},-\frac{150399}{32000}\right) $&$ \hspace{0.12cm} \left(\frac{933}{320},-\frac{72369}{16000}\right) $&$
   \hspace{0.1cm}  \left(\frac{243}{64},-\frac{13719}{3200}\right) $&$ \left(\frac{1881}{400},-\frac{363}{80}\right) $&$
   \left(\frac{567}{100},-\frac{411}{100}\right) $&$ 
\hspace{-0.2cm}\left(\frac{2577}{400},-\frac{357}{100}\right) $&$
   \hspace{-0.1cm} \left(\frac{2733}{400},-\frac{1329}{400}\right)\; $\\[0.1cm]
 $\left(\frac{57}{32},-\frac{8721}{1600}\right) $&$ \left(\frac{255}{128},-\frac{172311}{32000}\right) $&$
   \left(\frac{309}{128},-\frac{168093}{32000}\right) $&$ \left(\frac{195}{64},-\frac{80883}{16000}\right) $&$
   \left(\frac{249}{64},-\frac{15333}{3200}\right) $&$ \left(\frac{1857}{400},-\frac{21}{4}\right) $&$
   \left(\frac{2157}{400},-\frac{2013}{400}\right) $&$ \left(\frac{1227}{200},-\frac{891}{200}\right) $&$
   \left(\frac{13}{2},-\frac{1653}{400}\right) $\\[0.1cm]
 $\left(\frac{15}{8},-\frac{459}{80}\right) $&$ \left(\frac{333}{160},-\frac{9069}{1600}\right) $&$ \left(\frac{399}{160},-\frac{8847}{1600}\right)
   $&$ \left(\frac{249}{80},-\frac{4257}{800}\right) $&$ \left(\frac{63}{16},-\frac{807}{160}\right) $&$ \left(\frac{459}{100},-\frac{45}{8}\right) $&$
   \left(\frac{423}{80},-\frac{2187}{400}\right) $&$ \left(\frac{381}{64},-\frac{137}{28}\right) $&$ \left(\frac{63}{10},-\frac{201}{44}\right)$ \\[0.1cm]
\hline
\end{tabular}
\caption{Control points~$\ab{c}_{j_1,j_2}^{(i)}$
of the 
bilinear-like $G^4$ five-patch spline geometry \eqref{eq:five_patch_domain_bilinearLike} shown in Fig.~\ref{fig:bilinearLikeDomains} (right). 
}
\label{five_patch_domain_bilinearLikeTable} 
\end{table}

%

\end{document}